\newcommand{\exts}[1]{[\![{#1}]\!]}
\newcommand{\ints}[1]{(\![{#1}]\!)}
\renewcommand{\epsilon}{\varepsilon}
\newcommand{\bba}{\mathbb{A}}
\newcommand{\bbA}{\mathbb{A}}
\newcommand{\marginnote}[1]{\marginpar{\raggedright\tiny{#1}}}
\newcommand{\bari}[1]{\overline{#1}^{\, i}}
\tikzset{
	treenode/.style = {align=center, inner sep=0pt, text centered},
	Ske/.style = {treenode, ellipse, double, draw=black,
		minimum width=6pt, thick},
	PIA/.style = {treenode, ellipse, black, draw=black,
		minimum width=6pt},
	Crit/.style = {treenode, rectangle, draw=black,
		minimum width=0.5em, minimum height=0.5em}
}
\theoremstyle{plain}
\newtheorem{thm}{Theorem}
\newtheorem{lem}[thm]{Lemma}
\newtheorem{cor}[thm]{Corollary}
\newtheorem{prop}[thm]{Proposition}
\newtheorem{lemma}[thm]{Lemma}
\newtheorem{remark}[thm]{Remark}
\newtheorem{example}[thm]{Example}
\theoremstyle{definition}
\newtheorem{definition}[thm]{Definition}
\title{Goldblatt-Thomason for $\mathrm{LE}$-logics \thanks{This research is supported by the NWO Vidi grant 016.138.314, the NWO Aspasia grant 015.008.054, a Delft Technology Fellowship awarded to the second author in 2013. The authors would also like to
thank Robert Goldblatt for his very insightful and useful comments on a draft of this paper.}}
\author[1]{Willem Conradie}
\author[2,3]{Alessandra Palmigiano}
\author[2]{Apostolos Tzimoulis}
\affil[1]{University of the Witwatersrand, South Africa}
\affil[2]{Delft University of Technology, the Netherlands}
\affil[3]{University of Johannesburg, South Africa}
\date{}
\begin{document}
\maketitle

\begin{abstract}
We prove a uniform version of the Goldblatt-Thomason theorem for logics algebraically captured by  normal lattice expansions (normal LE-logics).\\
\emph{Keywords:} Goldblatt-Thomason theorem, polarity-based semantics, normal lattice expansions, non-distributive logics.\\
\emph{MCS2010:} 03G10, 03B47, 03B45, 03B60, 03C52, 03C20.  
\end{abstract}
\tableofcontents

\section{Introduction}
This paper pertains to a line of research aimed at developing the model theory of {\em polarity-based semantics} for classes of logics algebraically captured by varieties of normal lattice expansions  in any signature (collectively referred to as {\em normal  LE-logics}). Well known instances of LE-logics  abound and have been extensively investigated (see e.g.\ \cite{grishin1983,lambek1958mathematics,goldblatt1974semantic,galatos2007residuated}). Building on results and insights developed within the theory of canonical extensions \cite{gehrke2001bounded,dunn2005canonical}, polarity-based semantics was introduced in  \cite{gehrke2006generalized} for the multiplicative fragment of the Lambek calculus, based on RS-polarities (i.e.~those polarities that dually correspond to perfect lattices). The same methodology was applied in \cite{greco2018algebraic} to define polarity-based semantics for arbitrary LE-languages in a semantic setting in which the restriction to RS-polarities is dropped.

Thanks to its generality and uniformity, the polarity-based semantics for LE-logics lends itself to support a rich mathematical theory, uniformly developed for  the whole class of LE-logics or large subclasses thereof: examples of such results are the generalized Sahlqvist theory \cite{conradie2016algorithmic}, and the uniform proof of  semantic cut elimination and finite model property for certain classes of LE-logics \cite{greco2018algebraic}, paving the way to a research program aimed at extending also other results in algebraic proof theory (e.g.~decidability via finite embeddability property, disjunction property, Craig interpolation) from substructural logics to LE-logics.

Interestingly, the polarity-based semantics has also proved suitable to support
a number of independent, pre-theoretic interpretations of the meaning of (some) LE-languages, in the same way in which Kripke semantics captures the essentials of various independent conceptual frameworks of reference for modal logic.

%
%
Specifically, in \cite{conradie2016categories,TarkPaper},  the poly-modal lattice-based logic in  the LE-language $\wedge, \vee, \top, \bot, \Box_i$ for
$i\in \mathsf{Agents}$ was given a natural interpretation as an epistemic logic of formal concepts. That is, rather than states of affairs, formulas in this language denote formal concepts. The polarity-based semantics of this language consists of structures  $\mathbb{F} = (\mathbb{P}, \{R_i\mid i\in \mathsf{Agents}\})$, referred to as {\em enriched formal contexts}, such that $\mathbb{P} = (W,U,N)$ is a polarity and $R_i\subseteq W\times U$ for each $i\in \mathsf{Agents}$.

Building on the well known interpretation of polarities in Formal Concept Analysis \cite{ganter2012formal}, each such structure can be regarded as the abstract representation of some database of objects $w\in W$ and features $u\in U$, where $wNu$ is understood as `object $w$ has feature $u$', and, if  $i$ is an agent, $wR_iu$ is understood as `object $w$ has feature $u$, according to $i$'. The classical notion of satisfaction of a formula at a state generalizes to enriched formal contexts as $w\Vdash \phi$ standing for `object $w$ is a {\em member} of category $\phi$', and $u\succ \phi$ standing for `feature $u$ {\em describes } (i.e.~is part of the intension of) category $\phi$'. For any formal concept $\phi$, the term $\Box_i\phi$ denotes the formal concept the extension of which is the set of  objects to which agent $i$ attributes all the features describing $\phi$; in symbols $\exts{\Box_i\phi}: = \{w\in W\mid \forall u(u\succ \phi \Rightarrow wR_iu)\}$. Under this interpretation, $\Box_i\phi$  intuitively denotes `concept $\phi$ according to $i$'. This interpretation is also consistent with the epistemic interpretation of well known (Sahlqvist) modal principles such as $\Box_i p\vdash p$ (classically encoding the factivity of knowledge) and $\Box_i p\vdash \Box_i\Box_i p$ (classically encoding positive introspection), relative to their first-order correspondents on enriched formal contexts. For instance, the factivity axiom above corresponds to the first order condition $R_i\subseteq N$, requiring agent $i$ to be factually correct in her attributions.

In \cite{MNPW18}, the polarity-based semantics of the LE-logic in  the language $\wedge, \vee, \top, \bot, \Box, \Diamond$ is used as a natural framework for {\em rough concepts} which unifies Formal Concept Analysis and Rough Set Theory  \cite{pawlak1998rough}. The polarity-based semantics of this language consists of structures  $\mathbb{F} = (\mathbb{P}, R_\Box, R_\Diamond)$, referred to again as {\em enriched formal contexts}, such that $\mathbb{P} = (W,U,N)$ is a polarity, $R_\Box\subseteq W\times U$, and $R_\Diamond\subseteq U\times W$ is such that $R_\Diamond^{-1} = R_\Box$. Again,  each such structure can be regarded as the abstract representation of some database of objects $w\in W$ and features $u\in U$, where $wNu$ is understood as `object $w$ has feature $u$'. However, rather than having an epistemic interpretation, $wR_\Box u$ is now understood as `object $w$ {\em demonstrably} has feature $u$'. Under this interpretation, the members of $\Box\phi$   {\em demonstrably} have all the features in the description of $\phi$, and thus $\Box\phi$ intuitively denotes the category of the {\em certified  members} of $\phi$. Moreover, $\Diamond\phi$ is the concept  described by the set of features that each member of $\phi$ {\em demonstrably} has, and thus $\Diamond\phi$ intuitively denotes the category of the {\em candidate members} of $\phi$, since  every object outside this category misses at least one feature that every member of $\phi$ demonstrably has. Also this interpretation is consistent with the interpretation of well known (Sahlqvist) modal principles such as $\Box p\vdash \Diamond p$.

Precisely the availability of these and other interpretations makes it interesting to study the expressivity of LE-logics in regard to their polarity-based semantics, and further motivates the contribution of the present paper. 
Besides its centrality in the build-up of a uniform mathematical theory of the polarity-based semantics of LE-logics, the Goldblatt-Thomason theorem provides a useful strategy to determine whether a certain elementary class of polarity-based structures  can be  captured by an LE-axiomatic principle. It is enough to show that the given class fails to reflect/be closed under one of the usual constructions to establish that no such axiomatic principle exists.

The original Goldblatt-Thomason theorem \cite{goldblatt1975axiomatic} has been extended to various classical and distributive-based logical settings which include Positive Modal Logic \cite{celani1999priestley},  coalgebraic logic \cite{kurz2007goldblatt},  graded modal logic \cite{sano2010goldblatt}, distributive substructural logics \cite{bilkova2012distributive}, {\L}ukasiewicz logic \cite{teheux2014goldblatt}, and possibility semantics for modal logic \cite{holliday2016possibility}. As to  non-distributive logical settings, recently, Goldblatt himself gave a version of it for the logic of general lattices \cite{Go17}.  Our present contribution extends this results from polarities to LE-frames (cf.\ Definition \ref{def: LE frame}). 
\paragraph{Structure of the paper.}In Section \ref{sec:preliminaries}, we collect preliminaries on LE-logics and their algebraic and polarity-based semantics; in Section \ref{sec:constructions}, we introduce the morphisms of LE-frames that correspond to complete homomorphisms of complete LE-algebras, and the relevant constructions needed for the formulation of the Goldblatt-Thomason theorem; in Section \ref{sec:enlargement}, we prove that the  ``ultrafilter extensions'' of LE-frames are  p-morphic images of some of their ultrapowers; in Section \ref{sec:GT}, the main result of this paper is stated and proved; in Section \ref{sec:applications} we use the main result to show that certain first-order conditions on LE-frames are not definable in their corresponding LE-language; in Section \ref{sec:conclusions} we collect some conclusions and further directions.

\section{Preliminaries}
\label{sec:preliminaries}
In the present section, we collect preliminaries on LE-logics. Our presentation and notation are based on \cite{greco2018algebraic}.
\subsection{Syntax and algebraic semantics of $\mathrm{LE}$-logics}
\label{subset:language:algsemantics}


Our base language is an unspecified but fixed language $\mathcal{L}_\mathrm{LE}$, to be interpreted over  lattice expansions of compatible similarity type. 
Throughout the paper, we will use the following auxiliary definition: an {\em order-type} over $n\in \mathbb{N}$ 
is an $n$-tuple $\epsilon\in \{1, \partial\}^n$. For every order type $\epsilon$, we denote its {\em opposite} order type by $\epsilon^\partial$, that is, $\epsilon^\partial_i = 1$ iff $\epsilon_i=\partial$ for every $1 \leq i \leq n$. For any lattice $\bba$, we let $\bba^1: = \bba$ and $\bba^\partial$ be the dual lattice, that is, the lattice associated with the converse partial order of $\bba$. For any order type $\varepsilon$, we let $\bba^\varepsilon: = \Pi_{i = 1}^n \bba^{\varepsilon_i}$.

The language $\mathcal{L}_\mathrm{LE}(\mathcal{F}, \mathcal{G})$ (from now on abbreviated as $\mathcal{L}_\mathrm{LE}$) takes as parameters: 1) a denumerable set of proposition letters $\mathsf{Prop}$, elements of which are denoted $p,q,r$, possibly with indexes; 2) disjoint sets of connectives $\mathcal{F}$ and $\mathcal{G}$. Each $f\in \mathcal{F}$ and $g\in \mathcal{G}$ has arity $n_f\in \mathbb{N}$ (resp.\ $n_g\in \mathbb{N}$) and is associated with some order-type $\varepsilon_f$ over $n_f$ (resp.\ $\varepsilon_g$ over $n_g$).\footnote{Unary $f$ (resp.\ $g$) will be sometimes denoted  $\Diamond$ (resp.\ $\Box$) if their order-type is 1, and ${\lhd}$ (resp.\ $\rhd$) if their order-type is $\partial$.} The terms (formulas) of $\mathcal{L}_\mathrm{LE}$ are defined recursively as follows:
\[
\phi ::= p \mid \bot \mid \top \mid \phi \wedge \phi \mid \phi \vee \phi \mid f(\overline{\phi}) \mid g(\overline{\phi})
\]
where $p \in \mathsf{Prop}$, $f \in \mathcal{F}$, $g \in \mathcal{G}$. Terms in $\mathcal{L}_\mathrm{LE}$ will be denoted either by $s,t$, or by lowercase Greek letters such as $\varphi, \psi, \gamma$ etc. 

\begin{definition}
	\label{def:LE}
	For any tuple $(\mathcal{F}, \mathcal{G})$ of disjoint sets of function symbols as above, a {\em  lattice expansion} (abbreviated as LE) is a tuple $\bba = (D, \mathcal{F}^\bbA, \mathcal{G}^\bbA)$ such that $D$ is a bounded  lattice, $\mathcal{F}^\bbA = \{f^\bbA\mid f\in \mathcal{F}\}$ and $\mathcal{G}^\bbA = \{g^\bbA\mid g\in \mathcal{G}\}$, such that every $f^\bbA\in\mathcal{F}^\bbA$ (resp.\ $g^\bbA\in\mathcal{G}^\bbA$) is an $n_f$-ary (resp.\ $n_g$-ary) operation on $\bbA$. An LE is {\em normal} if every $f^\bbA\in\mathcal{F}^\bbA$ (resp.\ $g^\bbA\in\mathcal{G}^\bbA$) preserves finite joins (resp.\ meets) in each coordinate with $\epsilon_f(i)=1$ (resp.\ $\epsilon_g(i)=1$) and reverses finite meets (resp.\ joins) in each coordinate with $\epsilon_f(i)=\partial$ (resp.\ $\epsilon_g(i)=\partial$).\footnote{\label{footnote:LE vs DLO} Normal LEs are sometimes referred to as {\em  lattices with operators} (LOs). This terminology directly derives from the setting of Boolean algebras with operators, in which operators are understood as operations which preserve finite joins in each coordinate. However, this terminology results somewhat ambiguous in the lattice setting, in which primitive operations are typically maps which are operators if seen as $\bbA^\epsilon\to \bbA^\eta$ for some order-type $\epsilon$ on $n$ and some order-type $\eta\in \{1, \partial\}$. Rather than speaking of  lattices with $(\varepsilon, \eta)$-operators, we then speak of normal LEs.} Let $\mathbb{LE}$ be the class of LEs. Sometimes we will refer to certain LEs as $\mathcal{L}_\mathrm{LE}$-algebras when we wish to emphasize that these algebras have a compatible signature with the logical language we have fixed.
\end{definition}

In the remainder of the paper, 
we will abuse notation and write e.g.\ $f$ for $f^\bbA$.
Henceforth, every LE is assumed to be normal; hence the adjective `normal' will be typically dropped. The class of all LEs is equational, and can be axiomatized by the usual  lattice identities and the following equations for any $f\in \mathcal{F}$ (resp.\ $g\in \mathcal{G}$) and $1\leq i\leq n_f$ (resp.\ for each $1\leq j\leq n_g$):
\begin{itemize}
	\item if $\varepsilon_f(i) = 1$, then $f(p_1,\ldots, p\vee q,\ldots,p_{n_f}) = f(p_1,\ldots, p,\ldots,p_{n_f})\vee f(p_1,\ldots, q,\ldots,p_{n_f})$ and $f(p_1,\ldots, \bot,\ldots,p_{n_f}) = \bot$,
	\item if $\varepsilon_f(i) = \partial$, then $f(p_1,\ldots, p\wedge q,\ldots,p_{n_f}) = f(p_1,\ldots, p,\ldots,p_{n_f})\vee f(p_1,\ldots, q,\ldots,p_{n_f})$ and $f(p_1,\ldots, \top,\ldots,p_{n_f}) = \bot$,
	\item if $\varepsilon_g(j) = 1$, then $g(p_1,\ldots, p\wedge q,\ldots,p_{n_g}) = g(p_1,\ldots, p,\ldots,p_{n_g})\wedge g(p_1,\ldots, q,\ldots,p_{n_g})$ and $g(p_1,\ldots, \top,\ldots,p_{n_g}) = \top$,
	\item if $\varepsilon_g(j) = \partial$, then $g(p_1,\ldots, p\vee q,\ldots,p_{n_g}) = g(p_1,\ldots, p,\ldots,p_{n_g})\wedge g(p_1,\ldots, q,\ldots,p_{n_g})$ and $g(p_1,\ldots, \bot,\ldots,p_{n_g}) = \top$.
\end{itemize}
Each language $\mathcal{L}_\mathrm{LE}$ is interpreted in the appropriate class of LEs. In particular, for every LE $\bba$, each operation $f^\bba\in \mathcal{F}^\bbA$ (resp.\ $g^\bba\in \mathcal{G}^\bbA$) is finitely join-preserving (resp.\ meet-preserving) in each coordinate when regarded as a map $f^\bba: \bba^{\varepsilon_f}\to \bba$ (resp.\ $g^\bba: \bba^{\varepsilon_g}\to \bba$).

\begin{definition}\label{def:can:ext}
	The \emph{canonical extension} of a BL (bounded lattice) $L$ is a complete lattice $L^\delta$ containing $L$ as a sublattice, such that:
	\begin{enumerate}
		\item \emph{(denseness)} every element of $L^\delta$ can be expressed both as a join of meets and as a meet of joins of elements from $L$;
		\item \emph{(compactness)} for all $S,T \subseteq L$, if $\bigwedge S \leq \bigvee T$ in $L^\delta$, then $\bigwedge F \leq \bigvee G$ for some finite sets $F \subseteq S$ and $G\subseteq T$.
	\end{enumerate}
\end{definition}

It is well known that the canonical extension of a BL $L$ is unique up to isomorphism fixing $L$ (cf.\ e.g.\ \cite[Section 2.2]{GNV05}), and that the canonical extension of a BL is a \emph{perfect} BL, i.e.\ a complete  lattice which is completely join-generated by its completely join-irreducible elements and completely meet-generated by its completely meet-irreducible elements  (cf.\ e.g.\ \cite[Definition 2.14]{GNV05})\label{canext bdl is perfect}. The canonical extension of an
$\mathcal{L}_\mathrm{LE}$-algebra $\bbA = (L, \mathcal{F}^\bbA, \mathcal{G}^\bbA)$ is the perfect  $\mathcal{L}_\mathrm{LE}$-algebra
$\bbA^\delta: = (L^\delta, \mathcal{F}^{\bbA^\delta}, \mathcal{G}^{\bbA^\delta})$ such that $f^{\bbA^\delta}$ and $g^{\bbA^\delta}$ are defined as the
$\sigma$-extension of $f^{\bbA}$ and as the $\pi$-extension of $g^{\bbA}$ respectively, for all $f\in \mathcal{F}$ and $g\in \mathcal{G}$ (cf.\ \cite{sofronie2000duality, sofronie2000duality2}).

\medskip

The generic LE-logic is not equivalent to a sentential logic. Hence the consequence relation of these logics cannot be uniformly captured in terms of theorems, but rather in terms of sequents, which motivates the following definition:
\begin{definition}
	\label{def:LE:logic:general}
	For any language $\mathcal{L}_\mathrm{LE} = \mathcal{L}_\mathrm{LE}(\mathcal{F}, \mathcal{G})$, the {\em basic}, or {\em minimal} $\mathcal{L}_\mathrm{LE}$-{\em logic} is a set of sequents $\phi\vdash\psi$, with $\phi,\psi\in\mathcal{L}_\mathrm{LE}$, which contains the following axioms:
	\begin{itemize}
		\item Sequents for lattice operations:\footnote{In what follows we will use the turnstile symbol $\vdash$ both as sequent separator and also as the consequence relation of the logic.}
		\begin{align*}
		&p\vdash p, && \bot\vdash p, && p\vdash \top, &p\vdash p\vee q \\
		& q\vdash p\vee q, && p\wedge q\vdash p, && p\wedge q\vdash q, &
		\end{align*}
		\item Sequents for additional connectives:
		\begin{align*}
		& f(p_1,\ldots, \bot,\ldots,p_{n_f}) \vdash \bot,~\mathrm{for}~ \varepsilon_f(i) = 1,\\
		& f(p_1,\ldots, \top,\ldots,p_{n_f}) \vdash \bot,~\mathrm{for}~ \varepsilon_f(i) = \partial,\\
		&\top\vdash g(p_1,\ldots, \top,\ldots,p_{n_g}),~\mathrm{for}~ \varepsilon_g(i) = 1,\\
		&\top\vdash g(p_1,\ldots, \bot,\ldots,p_{n_g}),~\mathrm{for}~ \varepsilon_g(i) = \partial,\\
		&f(p_1,\ldots, p\vee q,\ldots,p_{n_f}) \vdash f(p_1,\ldots, p,\ldots,p_{n_f})\vee f(p_1,\ldots, q,\ldots,p_{n_f}),~\mathrm{for}~ \varepsilon_f(i) = 1,\\
		&f(p_1,\ldots, p\wedge q,\ldots,p_{n_f}) \vdash f(p_1,\ldots, p,\ldots,p_{n_f})\vee f(p_1,\ldots, q,\ldots,p_{n_f}),~\mathrm{for}~ \varepsilon_f(i) = \partial,\\
		& g(p_1,\ldots, p,\ldots,p_{n_g})\wedge g(p_1,\ldots, q,\ldots,p_{n_g})\vdash g(p_1,\ldots, p\wedge q,\ldots,p_{n_g}),~\mathrm{for}~ \varepsilon_g(i) = 1,\\
		& g(p_1,\ldots, p,\ldots,p_{n_g})\wedge g(p_1,\ldots, q,\ldots,p_{n_g})\vdash g(p_1,\ldots, p\vee q,\ldots,p_{n_g}),~\mathrm{for}~ \varepsilon_g(i) = \partial,
		\end{align*}
	\end{itemize}
	and is closed under the following inference rules:
	\begin{displaymath}
	\frac{\phi\vdash \chi\quad \chi\vdash \psi}{\phi\vdash \psi}
	\quad
	\frac{\phi\vdash \psi}{\phi(\chi/p)\vdash\psi(\chi/p)}
	\quad
	\frac{\chi\vdash\phi\quad \chi\vdash\psi}{\chi\vdash \phi\wedge\psi}
	\quad
	\frac{\phi\vdash\chi\quad \psi\vdash\chi}{\phi\vee\psi\vdash\chi}
	\end{displaymath}
	\begin{displaymath}
	\frac{\phi\vdash\psi}{f(p_1,\ldots,\phi,\ldots,p_n)\vdash f(p_1,\ldots,\psi,\ldots,p_n)}{~(\varepsilon_f(i) = 1)}
	\end{displaymath}
	\begin{displaymath}
	\frac{\phi\vdash\psi}{f(p_1,\ldots,\psi,\ldots,p_n)\vdash f(p_1,\ldots,\phi,\ldots,p_n)}{~(\varepsilon_f(i) = \partial)}
	\end{displaymath}
	\begin{displaymath}
	\frac{\phi\vdash\psi}{g(p_1,\ldots,\phi,\ldots,p_n)\vdash g(p_1,\ldots,\psi,\ldots,p_n)}{~(\varepsilon_g(i) = 1)}
	\end{displaymath}
	\begin{displaymath}
	\frac{\phi\vdash\psi}{g(p_1,\ldots,\psi,\ldots,p_n)\vdash g(p_1,\ldots,\phi,\ldots,p_n)}{~(\varepsilon_g(i) = \partial)}.
	\end{displaymath}
	The minimal $\mathcal{L}_\mathrm{LE}$-logic is denoted $\mathbb{L}_\mathrm{LE}$. By an {\em $\mathrm{LE}$-logic} we understand any axiomatic extension of $\mathbb{L}_\mathrm{LE}$ in the language $\mathcal{L}_{\mathrm{LE}}$. 
\end{definition}

For every LE $\bba$, the symbol $\vdash$ is interpreted as the lattice order $\leq$. A sequent $\phi\vdash\psi$ is valid in $\bba$ if $h(\phi)\leq h(\psi)$ for every homomorphism $h$ from the $\mathcal{L}_\mathrm{LE}$-algebra of formulas over $\mathsf{Prop}$ to $\bba$. The notation $\mathbb{LE}\models\phi\vdash\psi$ indicates that $\phi\vdash\psi$ is valid in every LE. Then, by means of a routine Lindenbaum-Tarski construction, it can be shown that the minimal LE-logic $\mathbb{L}_\mathrm{LE}$ is sound and complete with respect to its correspondent class of algebras $\mathbb{LE}$, i.e.\ that any sequent $\phi\vdash\psi$ is provable in $\mathbb{L}_\mathrm{LE}$ iff $\mathbb{LE}\models\phi\vdash\psi$. 

\subsection{$\mathrm{LE}$-frames and their complex algebras}
From now on, we fix an arbitrary normal LE-signature $\mathcal{L} = \mathcal{L}(\mathcal{F}, \mathcal{G})$.


\subsubsection{Notation}\label{ssec:notation}
For any sets $A, B$ and any relation $S \subseteq A \times B$, we let, for any $A' \subseteq A$ and $B' \subseteq B$,
$$S^\uparrow[A'] := \{b \in B\mid  \forall a(a \in A' \Rightarrow a\ S\ b ) \} \,\, \mathrm{and}\,\, S^\downarrow[B'] := \{a \in A \mid \forall b(b \in B' \Rightarrow a\ S\ b)  \}.$$
For all sets $A, B_1,\ldots B_n,$ and any relation $S \subseteq A \times B_1\times \cdots\times B_n$, for any $\overline{C}: = (C_1,\ldots, C_n)$ where $C_i\subseteq B_i$ and $1\leq i\leq n$ 
we let, for all $A'$,
\begin{equation}\label{eq:notation bari}
\bari{C}:  = (C_1,\ldots,C_{i-1}, C_{i+1},\ldots, C_n)
\end{equation}
\begin{equation}\label{eq:notation baridown}
\bari{C}_{A'}: = (C_1\ldots,C_{i-1}, A', C_{i+1},\ldots, C_n)
\end{equation}
\begin{equation}\label{eq:notation baridownandmore}
\overline{C}^{j,i}_{ A'}: = \overline{Y}^{j}\text{ where }Y\text{ is the generic element of }\bari{C}_{A'}
\end{equation}
that is, $\overline{C}^{j,i}_{A'}$ is the sequence obtained from $\overline{C}$ by replacing $C_i$ by $A'$ and removing the $j$-th coordinate.  
When $C_i: = \{c_i\}$ and $A': =\{a'\} $, we will write $\overline{c}$ for $\overline{\{c\}}$,  and $\bari{c}$ for $\bari{\{c\}}$, and $\bari{c}_{a'}$ for $\bari{\{c\}}_{\{a'\}}$.
We also let:
\begin{enumerate}
	\item $S^{(0)}[\overline{C}] := \{a \in A\mid  \forall \overline{b}(\overline{b}\in \overline{C} \Rightarrow a\ S\ \overline{b} ) \} .$
	
	\item $S_i \subseteq B_i \times B_1 \times \cdots B_{i-1} \times A \times B_{i + 1} \times \cdots\times B_n$ be defined by
	\[(b_i, \overline{c}_{a}^{\, i})\in S_i \mbox{ iff } (a,\overline{c})\in S.\]
	
	\item $S^{(i)}[A', \bari{C}] := S_i^{(0)}[\overline{C}^{\, i}_{A'}]$.
\end{enumerate}

\begin{lemma}[cf.\ \cite{greco2018algebraic} Lemma 15]\label{lem:monochrome}
	If  $S \subseteq A \times B_1\times \cdots\times B_n$ and $\overline{C}$ is as above, then for any $1\leq i \leq n$,
	\begin{equation}
	\label{composition}
	C_i \subseteq S^{(i)}[S^{(0)}[\overline{C}], \bari{C}].
	\end{equation}
\end{lemma}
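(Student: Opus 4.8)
The claim $C_i \subseteq S^{(i)}[S^{(0)}[\overline{C}], \bari{C}]$ is a "double polar/Galois" style containment, and the natural strategy is to unravel all the definitions and check membership pointwise. Fix $c \in C_i$; I want to show $c \in S^{(i)}[S^{(0)}[\overline{C}], \bari{C}]$. By definition (3) of the notation, $S^{(i)}[S^{(0)}[\overline{C}], \bari{C}] = S_i^{(0)}[\overline{C}^{\,i}_{S^{(0)}[\overline{C}]}]$, i.e.\ the set of those $b_i \in B_i$ such that $b_i \mathrel{S_i} \overline{d}$ for every tuple $\overline{d}$ ranging over $\overline{C}^{\,i}_{S^{(0)}[\overline{C}]}$ — that is, over all tuples obtained from $\overline{C}$ by replacing $C_i$ with $S^{(0)}[\overline{C}]$ and then deleting the $i$-th coordinate. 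So such a tuple $\overline{d}$ consists of a choice $c_j \in C_j$ for each $j \neq i$ together with a choice $a \in S^{(0)}[\overline{C}]$, placed in the coordinate where $B_i$ used to be, with the $i$-th slot removed.

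The next step is to translate the condition $b_i \mathrel{S_i} \overline{d}$ back through definition (2) of $S_i$: namely $(b_i, \overline{c}^{\,i}_a) \in S_i$ iff $(a, \overline{c}) \in S$, where on the right $\overline{c}$ is the tuple with $c_j$ in coordinate $j \neq i$ and $b_i$ reinstated in coordinate $i$. So, taking $b_i := c$, I need: for every $a \in S^{(0)}[\overline{C}]$ and every choice of $c_j \in C_j$ ($j \neq i$), the tuple $(a; c_1, \ldots, c_{i-1}, c, c_{i+1}, \ldots, c_n) \in S$. But $a \in S^{(0)}[\overline{C}]$ means precisely that $a \mathrel{S} \overline{b}$ for every $\overline{b} \in \overline{C}$, i.e.\ for every choice $b_j \in C_j$ across all coordinates $j = 1, \ldots, n$. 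Since $c \in C_i$ by assumption and $c_j \in C_j$ for $j \neq i$, the tuple $(c_1, \ldots, c_{i-1}, c, c_{i+1}, \ldots, c_n)$ is one of the tuples in $\overline{C}$, so indeed $a \mathrel{S} (c_1, \ldots, c_{i-1}, c, c_{i+1}, \ldots, c_n)$. This closes the argument.

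The whole proof is thus a bookkeeping exercise: the only real content is the elementary fact that $a \in S^{(0)}[\overline{C}]$ quantifies over \emph{all} coordinates including the $i$-th, so membership of $c$ in $C_i$ is exactly what is needed to instantiate that quantifier. The one place to be careful — and the only plausible source of error — is keeping the index juggling straight: which coordinate has been deleted, which has been substituted, and the re-indexing implicit in $S_i$ (where the $B_i$-coordinate is moved to the front). I would write the membership chain slowly, making explicit at each step which tuple lives in which product, rather than trying to compress it. No deep idea or external lemma is needed; this is the non-distributive analogue of the inclusion $X \subseteq (X^\uparrow)^\downarrow$ for a single Galois connection, just spread across several coordinates of an $(n{+}1)$-ary relation.
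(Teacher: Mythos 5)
Your proof is correct: the paper states this lemma without proof (citing Lemma 15 of \cite{greco2018algebraic}), and the intended argument is exactly the routine unwinding you give — membership $c\in C_i$ instantiates the universal quantifier over the $i$-th coordinate hidden in $a\in S^{(0)}[\overline{C}]$, after translating $S^{(i)}$ back through the definition of $S_i$. Your handling of the coordinate bookkeeping (deletion, substitution, and the fact that $S_i$ moves the $B_i$-slot to the front while $A$ occupies the old $i$-th slot) is accurate, so nothing is missing.
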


\subsubsection{$\mathrm{LE}$-frames}

\begin{definition}[Polarity]\label{def:polarity}
	A {\em polarity} is a structure $\mathbb{W} = (W, U, N)$ where $W$ and $U$ are sets and $N$ is a binary relation from $W$ to $U$.
\end{definition}
If $\mathbb{L}$ is a lattice, then $\mathbb{W}_L = (L, L, \leq)$ is a polarity. Conversely, for any polarity $\mathbb{W}$, we let $\mathbb{W}^+$ denote the concept lattice associated with $\mathbb{W}$. Any $a\in\mathbb{W}^+$ can be represented as a tuple $(\exts{a},\ints{a})$ such that $\ints{a}=\exts{a}^\uparrow$ and $\exts{a}=\ints{a}^\downarrow$, where  for every $X \subseteq W$ and $Y \subseteq U$, $X^\uparrow$ and $Y^\downarrow$ are abbreviations for $N^\uparrow[X]$ and $N^\downarrow[Y]$ respectively. As is well-known, $\mathbb{W}^+$ is isomorphic to the complete sub $\bigcap$-semilattice of the Galois-stable sets of the closure operator $\gamma_N: \mathcal{P}(W) \rightarrow \mathcal{P}(W)$ defined by the assignment  $X\mapsto  X^{\uparrow\downarrow}$. Hence,  $\mathbb{W}^+$ is a complete lattice, in which $\bigvee S := \gamma_N(\bigcup S)$ for any $S \subseteq \gamma_N[\mathcal{P}(W)]$. Moreover, $\mathbb{W}^+$ can be equivalently obtained as the dual lattice of the Galois-stable sets of the closure operator $\gamma'_N: \mathcal{P}(U) \rightarrow \mathcal{P}(U)$ defined by the assignment  $Y\mapsto  Y^{\downarrow\uparrow}$.


From now on, we 
focus on $\mathcal{L}$-algebras $\mathbb{A} = (L, \wedge, \vee, \bot, \top, \mathcal{F}, \mathcal{G})$.




\begin{definition}\label{def: LE frame}
	An {\em $\mathcal{L}$-frame} is a tuple $\mathbb{F} = (\mathbb{W}, \mathcal{R}_{\mathcal{F}}, \mathcal{R}_{\mathcal{G}})$ such that $\mathbb{W} = (W, U,  N)$ is a polarity, $\mathcal{R}_{\mathcal{F}} = \{R_f\mid f\in \mathcal{F}\}$, and $\mathcal{R}_{\mathcal{G}} = \{R_g\mid g\in \mathcal{G}\}$ such that  for each $f\in \mathcal{F}$ and $g\in \mathcal{G}$, the symbols $R_f$ and  $R_g$ respectively denote $(n_f+1)$-ary and $(n_g+1)$-ary relations on $\mathbb{W}$,
	\begin{equation}
	R_f \subseteq U \times W^{\epsilon_f}    \ \mbox{ and }\ R_g \subseteq W \times U^{\epsilon_g},
	\end{equation}
	where for any order type $\epsilon$ on $n$, we let $W^{\epsilon} : = \prod_{i = 1}^{n}W^{\epsilon(i)}$ and $U^{\epsilon} : = \prod_{i = 1}^{n}U^{\epsilon(i)}$, where for all $1 \leq i \leq n$,
	
	\begin{center}
		\begin{tabular}{ll}
			$W^{\epsilon(i)} = \begin{cases}
			W &\mbox{ if } \epsilon(i) = 1\\
			U &\mbox{ if } \epsilon(i) = \partial
			\end{cases}\quad$
			&
			$U^{\epsilon(i)} = \begin{cases}
			U& \mbox{ if } \epsilon(i) = 1,\\
			W & \mbox{ if } \epsilon(i) = \partial.
			\end{cases}$
		\end{tabular}
	\end{center}
	
	In addition, we assume that  the following sets are Galois-stable (from now on abbreviated as {\em stable}) for all $w_0 \in W$, $u_0 \in U $, $\overline{w} \in W^{\epsilon_f}$, and $\overline{u} \in U^{\epsilon_g}$:
	
	\begin{equation}\label{eq:compa1}
		R_f^{(0)}[\overline{w}]\text{ and }R_f^{(i)}[u_0, \bari{w}] 
		\end{equation}
		
	 \begin{equation}\label{eq:compa2}
		R_g^{(0)}[\overline{u}]\text{ and }R_g^{(i)}[w_0, \bari{u}]
		\end{equation}

\end{definition}


In what follows, for any order type $\epsilon$ on $n$,  we let
$$ W^{\epsilon} \supseteq \overline{X}:= (X^{\epsilon(1)}, \ldots, X^{\epsilon(n)}),$$
where $X^{\epsilon(i)} \subseteq W^{\epsilon(i)}$ for all $1 \leq i \leq n$, and let
$$U^{\epsilon} \supseteq \overline{Y}:= (Y^{\epsilon(1)}, \ldots, Y^{\epsilon(n)}),$$
where $Y^{\epsilon(i)} \subseteq U^{\epsilon(i)}$ for all $1 \leq i \leq n$. Moreover,
we let $\overline{X}^i$, $\overline{X}^i_Z$, $\overline{Y}^i$ and $\overline{X}^i_Z$ be defined as in Subsection \ref{ssec:notation}.


\begin{lemma}[cf.\ \cite{greco2018algebraic} Lemma 18]\label{lem:Ri closed}
	For any $\mathcal{L}$-frame $\mathbb{F} = (\mathbb{W}, \mathcal{R}_{\mathcal{F}}, \mathcal{R}_{\mathcal{G}})$, any $f \in \mathcal{F}$ and $g \in \mathcal{G},$ \begin{enumerate}
		\item 
		if $Y_0 \subseteq U$,
		then
		$R_f^{(0)}[\overline{X}] $ and $R_f^{(i)}[Y_0, \bari{X}] $ are stable sets for all $1 \leq i \leq n_f$;
		
		\item 
		if $X_0 \subseteq W$,
		then
		$R_g^{(0)}[\overline{Y}] $ and $R_g^{(i)}[X_0, \bari{Y}] $ are stable sets for all $1 \leq i \leq n_g$.
	\end{enumerate}
\end{lemma}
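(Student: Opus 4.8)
The plan is to reduce the statement about arbitrary subsets $\overline{X}$, $\overline{Y}$, $Y_0$, $X_0$ to the stability assumptions built into the definition of an $\mathcal{L}$-frame (equations \eqref{eq:compa1} and \eqref{eq:compa2}), which only postulate stability for the ``singleton-like'' data $\overline{w}$, $u_0$, $\overline{u}$, $w_0$. The conceptual point is that each of the maps $R_f^{(0)}[-]$ and $R_f^{(i)}[-, -]$, viewed as a function of any one of its set-arguments, turns unions into intersections — it is, in each coordinate, a ``Galois-connection-like'' map built from the relation $R_f$ exactly as $N^\uparrow$ and $N^\downarrow$ are built from $N$. Concretely, $R_f^{(0)}[\overline{X}] = \bigcap\{\, R_f^{(0)}[\overline{w}] \mid \overline{w}\in\overline{X}\,\}$ where $\overline{w}$ ranges over the ``pointwise'' tuples with $w_j$ ranging over $X_j$, and similarly $R_f^{(i)}[Y_0,\bari{X}]$ is an intersection of sets of the form $R_f^{(i)}[u_0,\bari{w}]$ over $u_0\in Y_0$ and $\overline{w}$ ranging pointwise over $\bari{X}$. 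Since each member of these intersections is stable by the frame axioms \eqref{eq:compa1}, and since an arbitrary intersection of Galois-stable sets is again Galois-stable (the Galois-stable sets form a closure system — this is the standard fact recalled just before Definition \ref{def: LE frame}), the sets $R_f^{(0)}[\overline{X}]$ and $R_f^{(i)}[Y_0,\bari{X}]$ are stable.

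First I would make precise the decomposition $S^{(0)}[\overline{C}] = \bigcap_{\overline{c}\,:\,c_j\in C_j}\, S^{(0)}[\overline{c}]$, unwinding the definition $S^{(0)}[\overline{C}] = \{a\mid \forall\overline{b}(\overline{b}\in\overline{C}\Rightarrow a\,S\,\overline{b})\}$: membership of $a$ in the left-hand side says $a\,S\,\overline{b}$ for every choice of $\overline{b}$ with $b_j\in C_j$, which is exactly membership in the right-hand side. Applying this with $S = R_f$ and $\overline{C} = \overline{X}$ gives $R_f^{(0)}[\overline{X}] = \bigcap R_f^{(0)}[\overline{w}]$ over pointwise tuples $\overline{w}\in\overline{X}$; by \eqref{eq:compa1} each $R_f^{(0)}[\overline{w}]$ is stable, so the intersection is stable. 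For $R_f^{(i)}[Y_0,\bari{X}]$, I would recall that by definition $R_f^{(i)}[A',\bari{C}] = (R_f)_i^{(0)}[\overline{C}^{\,i}_{A'}]$, i.e. it is an $S^{(0)}$-type set for the ``rotated'' relation $(R_f)_i$ with argument tuple $\overline{C}^{\,i}_{A'} = (C_1,\ldots,C_{i-1},A',C_{i+1},\ldots,C_n)$ (the $i$-th coordinate replaced by $A'$, $j$-th coordinate removed — here the removed coordinate corresponds to $A$, reintroduced as $A'$). Then the same decomposition yields $R_f^{(i)}[Y_0,\bari{X}] = \bigcap (R_f)_i^{(0)}[\,\overline{w}'\,]$ over pointwise tuples $\overline{w}'\in\overline{X}^{\,i}_{Y_0}$, i.e. over $u_0\in Y_0$ and $w_j\in X_j$ for $j\neq i$; each such set is $R_f^{(i)}[u_0,\bari{w}]$, stable by \eqref{eq:compa1}, hence the intersection is stable. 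The case of $g\in\mathcal{G}$ (item 2) is entirely dual, replacing $R_f$, $W$, $U$, $\overline{X}$, $Y_0$, $\gamma_N$ by $R_g$, $U$, $W$, $\overline{Y}$, $X_0$, $\gamma'_N$ throughout, and using \eqref{eq:compa2} in place of \eqref{eq:compa1}.

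I expect the only real subtlety — and the main thing to get right rather than a genuine obstacle — to be bookkeeping with the indices and coordinate rotations in the definition of $R_f^{(i)}[\,\cdot\,,\,\cdot\,]$: one must be careful that ``removing the $j$-th coordinate'' in \eqref{eq:notation baridownandmore} interacts correctly with the rotation $(b_i,\bari{c}_a)\in S_i \iff (a,\overline{c})\in S$, so that the arguments indeed range over $W^{\epsilon_f(j)}$ (namely $W$ or $U$) of the right sort in each slot. Once the decomposition-as-intersection identity is established (which is a pure set-theoretic unwinding, not using any special structure of $N$), the conclusion is immediate from the frame axioms plus the closure-system property of Galois-stable sets, so no further calculation is needed. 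I would close by remarking that \eqref{eq:compa1} and \eqref{eq:compa2} are exactly the instances of this lemma with $\overline{X}$, $Y_0$, etc. specialized to singletons, so the lemma says precisely that those apparently weak stability requirements propagate to arbitrary set-arguments.
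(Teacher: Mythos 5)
Your proposal is correct: the decomposition $R_f^{(0)}[\overline{X}]=\bigcap\{R_f^{(0)}[\overline{w}]\mid \overline{w}\in\overline{X}\}$ (and likewise for $R_f^{(i)}[Y_0,\bari{X}]$ via the rotated relation $(R_f)_i$), combined with the frame conditions \eqref{eq:compa1}--\eqref{eq:compa2} and the fact that Galois-stable sets form a closure system closed under arbitrary intersections (including the empty intersection, since $W$ and $U$ are stable), is exactly the standard argument. The paper itself does not reprove this lemma but imports it from \cite{greco2018algebraic} (Lemma 18), where the proof proceeds in essentially the same way, so there is nothing to add beyond the index bookkeeping you already flag.
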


The following lemma gives equivalent conditions to \eqref{eq:compa1} and \eqref{eq:compa2}. We make use of notation introduced in \eqref{eq:notation bari}, \eqref{eq:notation baridown}, \eqref{eq:notation baridownandmore}. To simplify the notation we identify $\gamma_N$ and $\gamma'_N$.
\begin{lemma}\label{lem:two definitions of compatibility}
	Let $\mathbb{W}=(W,U,N)$ be a polarity and $\epsilon$ be an order type on $n$.
	\begin{enumerate}[i]
		\item For any $R\subseteq U\times W^{\epsilon}$ and any $0\leq i\leq n$, the following are equivalent: \begin{enumerate} 
			\item $R^{(i)}[\bari{X}]$ is stable for every $\overline{X}\subseteq U\times W^{\epsilon}$.
			\item $R^{(j)}[\overline{X}^{j}]=R^{(j)}[\overline{X}^{j,i}_{\gamma_N(X_i)}]$ for every $\overline{X}\subseteq W^{\epsilon}$ and $j\neq i$.
		\end{enumerate}
		\item  For any $R\subseteq W\times U^{\epsilon}$ and any $0\leq i\leq n$, the following are equivalent: \begin{enumerate}
		\item $R^{(i)}[\bari{Y}]$ is stable for every $\overline{Y}\subseteq W\times  U^{\epsilon}$.
		\item $R^{(j)}[\overline{Y}]=R^{(j)}[\overline{Y}^{j,i}_{\gamma_N(Y_i)}]$ for every $\overline{Y}\subseteq W\times U^{\epsilon}$ and $j\neq i$.
	\end{enumerate}
	\end{enumerate}
\end{lemma}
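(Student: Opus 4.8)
The plan is to prove each direction of the equivalence in item (i), since item (ii) follows by the evident symmetry (swapping the roles of $W$ and $U$, and of $\gamma_N$ and $\gamma'_N$, which we have agreed to identify). So fix $R\subseteq U\times W^\epsilon$ and $0\leq i\leq n$. Unpacking the definitions from Subsection~\ref{ssec:notation}, $R^{(i)}[\overline{X}^{\,i}]$ is by definition $R_i^{(0)}[\overline{X}^{\,i}_{A'}]$ for the appropriate $A'$ in the $i$-th slot; the content of condition (b) is precisely the statement that replacing the $i$-th argument $X_i$ by its Galois closure $\gamma_N(X_i)$ does not change the value of $R^{(j)}[-]$ for any other index $j$ (including $j=0$, with the convention $\overline{X}^{0}=\overline{X}$). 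The strategy throughout is to exploit the elementary facts about the Galois connection $N^\uparrow,N^\downarrow$: that $X\subseteq Z^\uparrow$ iff $Z\subseteq X^\downarrow$ (and symmetrically), that a set is stable iff it equals $\gamma_N$ of itself, that $\gamma_N(X)=\gamma_N(Z)$ whenever $X\subseteq Z\subseteq\gamma_N(X)$, and crucially Lemma~\ref{lem:monochrome}, which gives the ``composition'' inclusion $C_i\subseteq S^{(i)}[S^{(0)}[\overline{C}],\bari{C}]$.

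\textbf{(a) $\Rightarrow$ (b).} Assume $R^{(i)}[\overline{Z}^{\,i}]$ is stable for every choice of the ``other'' coordinates $\overline{Z}^{\,i}$ (where the slot being filled by $\gamma_N(X_i)$-type arguments ranges over subsets of the appropriate sort, and the $U$-coordinate too, i.e.\ $\overline{Z}\subseteq U\times W^\epsilon$). Fix $\overline{X}\subseteq W^\epsilon$ and $j\neq i$; I want $R^{(j)}[\overline{X}^{\,j}]=R^{(j)}[\overline{X}^{\,j,i}_{\gamma_N(X_i)}]$. One inclusion is monotonicity: since $X_i\subseteq\gamma_N(X_i)$, enlarging the $i$-th argument can only shrink the ``for all'' set, so $R^{(j)}[\overline{X}^{\,j,i}_{\gamma_N(X_i)}]\subseteq R^{(j)}[\overline{X}^{\,j}]$. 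For the reverse inclusion, the key observation is that $R^{(j)}[\overline{X}^{\,j}]$ is contained in $R^{(i)}[R^{(0)}[\overline{X}],\bari{X}]$-type sets by Lemma~\ref{lem:monochrome} applied with $S=R_j$ (or directly: any element related to all of $\overline{X}^{\,j}$ witnesses, via the defining biconditionals for the $S_i$'s, that the $i$-th coordinate's closure is subsumed). More directly: let $Y_0:=R^{(0)}[\overline{X}]$; then $X_i\subseteq R^{(i)}[Y_0,\bari{X}]$ by Lemma~\ref{lem:monochrome}, and the latter set is stable by hypothesis, hence $\gamma_N(X_i)\subseteq R^{(i)}[Y_0,\bari{X}]$, which unpacks to: everything in $Y_0$ is $R$-related to $\gamma_N(X_i)$ in the $i$-th slot (with the $\bari{X}$ in the others). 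Feeding this back shows any element of $R^{(j)}[\overline{X}^{\,j}]$ remains in $R^{(j)}[\overline{X}^{\,j,i}_{\gamma_N(X_i)}]$, using that the roles of the coordinates $j$ and $0$ can be interchanged via the $S_i$ construction. This is the step requiring the most careful bookkeeping.

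\textbf{(b) $\Rightarrow$ (a).} Conversely, assume (b) holds for all $\overline{X}$ and all $j\neq i$; I must show $R^{(i)}[\bari{X}]$ is stable, i.e.\ $\gamma_N(R^{(i)}[\bari{X}])\subseteq R^{(i)}[\bari{X}]$, for every $\bari{X}$ (a tuple omitting the $i$-th coordinate, with the $U$-coordinate present). Take $a\in\gamma_N(R^{(i)}[\bari{X}])$; I want $a$ to be $R$-related, in the $i$-th slot, to each element of the relevant other slots. Here the idea is to run (b) ``in reverse'': the set $R^{(i)}[\bari{X}]$ is itself of the form $R_i^{(0)}[-]$, and one checks directly from the Galois connection that $a$ lying in its closure means that for every element $c$ that ought to be an $R$-partner in some coordinate $j\neq i$, condition (b) with a suitable singleton-based tuple forces the required relation — concretely, picking $\overline{X}$ so that the $i$-th argument is a singleton whose closure captures the behaviour at $a$, and reading off from the equality $R^{(j)}[\overline{X}^{\,j}]=R^{(j)}[\overline{X}^{\,j,i}_{\gamma_N(X_i)}]$ that membership is preserved under closure. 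I expect the main obstacle to be the indexing gymnastics in both directions — keeping straight which coordinate is ``active'', which is the distinguished $U$- (resp.\ $W$-) coordinate playing the role of output, and that the conventions of \eqref{eq:notation baridown}--\eqref{eq:notation baridownandmore} behave correctly when one of the $C_i$'s is a singleton — rather than any genuinely deep step; the mathematical content is entirely Lemma~\ref{lem:monochrome} together with the standard properties of $\gamma_N$.
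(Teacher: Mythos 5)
Your (a)\,$\Rightarrow$\,(b) direction is essentially sound and is in substance the paper's own argument: the paper just packages your two ingredients (Lemma~\ref{lem:monochrome} plus the ``interchange of coordinates'' you invoke) into the single biconditional $X_j\subseteq R^{(j)}[\overline{X}^{\,j}]\iff X_i\subseteq R^{(i)}[\overline{X}^{\,i}]$, valid for any pair of indices, and then runs exactly your computation: feed in $R^{(j)}[\overline{X}^{\,j}]$ at the $j$-th (or $0$-th) slot, use stability of the resulting $i$-section to pass from $X_i$ to $\gamma_N(X_i)$, and transpose back. So that half is fine, modulo the bookkeeping you yourself flag.

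The genuine gap is in (b)\,$\Rightarrow$\,(a). You propose to instantiate (b) ``with a suitable singleton-based tuple'', i.e.\ with the $i$-th argument a singleton $\{b\}$ whose closure ``captures the behaviour at $a$''. No such singleton exists in general: $\gamma_N$ is a Galois-connection closure, and $\gamma_N(Z)$ is typically strictly larger than $\bigcup_{b\in Z}\gamma_N(\{b\})$. For instance, take $W=\{w_1,w_2,w_3\}$, $U=\{u_1,u_2\}$, $N=\{(w_1,u_1),(w_2,u_2)\}$: then $\gamma_N(\{w_1,w_2\})=\emptyset^{\downarrow}=W\ni w_3$, while $\gamma_N(\{w_1\})=\{w_1\}$ and $\gamma_N(\{w_2\})=\{w_2\}$. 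So an element $a\in\gamma_N(R^{(i)}[\overline{X}^{\,i}])$ need not lie in $\gamma_N(\{b\})$ for any single $b\in R^{(i)}[\overline{X}^{\,i}]$, and the step ``reading off from the equality that membership is preserved under closure'' has nothing to read off. The repair is to apply (b) not to a singleton but to the tuple whose $i$-th coordinate is the whole section $R^{(i)}[\overline{X}^{\,i}]$: for any $j\neq i$ one has $X_j\subseteq R^{(j)}[\overline{X}^{\,j,i}_{R^{(i)}[\overline{X}^{\,i}]}]$ (this is just the transposed form of Lemma~\ref{lem:monochrome}, or the trivial inclusion $R^{(i)}[\overline{X}^{\,i}]\subseteq R^{(i)}[\overline{X}^{\,i}]$ read through the biconditional above); hypothesis (b) then allows replacing $R^{(i)}[\overline{X}^{\,i}]$ by $\gamma_N(R^{(i)}[\overline{X}^{\,i}])$ in that slot, and transposing back yields $\gamma_N(R^{(i)}[\overline{X}^{\,i}])\subseteq R^{(i)}[\overline{X}^{\,i}]$, i.e.\ stability. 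This is exactly how the paper proves this direction; note also that your reduction to singletons is harmless only for the coordinates \emph{other} than $i$ (where $R^{(i)}[\overline{X}^{\,i}]$ is an intersection over pointwise choices), never for the $i$-th slot itself.
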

\begin{proof}
	i. By definition, for any $i,j$ and $\overline{X}$, \begin{equation}\label{eq:keypointofequivalencedefinitions}
	X_j\subseteq R^{(j)}[\overline{X}^{j}]\iff X_i\subseteq R^{(i)}[\bari{X}].
	\end{equation}Let us assume that $R^{(i)}[\overline{X}]$ is stable for every $\overline{X}\subseteq U\times W^{\epsilon}$ and show that $R^{(j)}[\overline{X}^{j}]\subseteq R^{(j)}[\overline{X}^{j,i}_{\gamma_N(X_i)}]$, the converse inclusion following from the antitonicity of $R^{(j)}$:
	\begin{align*}
	&	\quad R^{(j)}[\overline{X}^{j}]\subseteq  R^{(j)}[\overline{X}^{j}]\\
	\iff & \quad  X_i\subseteq R^{(i)}[\overline{X}^{i,j}_{R^{(j)}[\overline{X}]}]\tag{by \ref{eq:keypointofequivalencedefinitions}}\\
	\iff & \quad  \gamma_N(X_i)\subseteq R^{(i)}[\overline{X}^{i,j}_{R^{(j)}[\overline{X}]}]\tag{$R^{(i)}[\overline{X}^{i,j}_{R^{(j)}[\overline{X}]}]$ is stable by assumption}\\
	\iff & R^{(j)}[\overline{X}^{j}]\subseteq R^{(j)}[\overline{X}^{j,i}_{\gamma_N(X_i)}]\tag{by \ref{eq:keypointofequivalencedefinitions}}.
	\end{align*}
	Now assume that $R^{(j)}[\overline{X}^{j}]=R^{(j)}[\overline{X}^{j,i}_{\gamma_N(X_i)}]$ for every $\overline{X}\subseteq U\times W^{\epsilon}$ and $j\neq i$. We want to show that $\gamma_N(R^{(i)}[\overline{X}^{i}])\subseteq R^{(i)}[\overline{X}^{i}]$:
	\begin{align*}
	&	\quad R^{(i)}[\overline{X}^{i}]\subseteq  R^{(i)}[\overline{X}^{i}]\\
	\iff & \quad  X_j\subseteq R^{(j)}[\overline{X}^{j,i}_{R^{(i)}[\overline{X}^{i}]}]\tag{by \ref{eq:keypointofequivalencedefinitions}}\\
	\iff & \quad  X_j\subseteq R^{(j)}[\overline{X}^{j,i}_{\gamma_N(R^{(i)}[\overline{X}^{i}])}]\tag{$R^{(j)}[\overline{X}^{j,i}_{R^{(i)}[\overline{X}^{i}]}]=R^{(j)}[\overline{X}^{j,i}_{\gamma_N(R^{(i)}[\overline{X}^{i}])}]$ by assumption}\\
	\iff & \gamma_N(R^{(i)}[\overline{X}^{i}])\subseteq R^{(i)}[\overline{X}^{i}]\tag{by \ref{eq:keypointofequivalencedefinitions}}.
	\end{align*}
The proof of (ii) follows verbatim.
\end{proof}

\begin{remark}
	In case $R\subseteq U\times W$, the above lemma states that $R^{(0)}[X]$ is stable for every $X\subseteq W$ if and only if $R^{(1)}[Y]=R^{(1)}[Y^{\downarrow\uparrow}]$ for any $Y\subseteq U$, and  $R^{(1)}_f[Y]$ is stable for every $Y\subseteq U$ if and only if $R^{(0)}[X]=R^{(0)}[X^{\uparrow\downarrow}]$ for any $X\subseteq W$. Hence the lemma above gives an equivalent reformulation of the definition of compatibility in \cite{moshier2016relational} (see also Lemma 1.4 therein).
\end{remark}

\subsubsection{Complex algebras of $\mathrm{LE}$-frames}
Given a polarity $\mathbb{W}$ and $a\in\mathbb{W}^+$, $\epsilon:\{1,\ldots,n\}\to\{1,\partial\}$, for every $1\leq i\leq n$ we denote 
\[
\exts{a}^{\epsilon(i)} = \begin{cases}
\exts{a} &\mbox{ if }\epsilon(i)= 1,\\  \ints{a}&\mbox{ if }\epsilon(i)= \partial\\
\end{cases}
\]
and
\[
\ints{a}^{\epsilon(i)} = \begin{cases}
\ints{a} &\mbox{ if }\epsilon(i)= 1,\\  \exts{a}&\mbox{ if }\epsilon(i)= \partial.\\
\end{cases}
\]
\begin{definition} \label{def:complexalg}
	The {\em complex algebra} of  an $\mathcal{L}$-frame $\mathbb{F} = (\mathbb{W}, \mathcal{R}_{\mathcal{F}}, \mathcal{R}_{\mathcal{G}})$ is the algebra
	$$\mathbb{F}^+ = (\mathbb{L}, \{f_{R_f} \mid f \in \mathcal{F}\}, \{g_{R_g} \mid g \in \mathcal{G}\}),$$
	where $\mathbb{L} := \mathbb{W}^+$ (cf.~Definition \ref{def:polarity}), and for all $f \in \mathcal{F}$ and all $g \in \mathcal{G}$,
	we let
	\begin{enumerate}
		\item $ f_{R_f}: \mathbb{L}^{n_f}\to \mathbb{L}$ be defined by the assignment $f_{R_f}(\overline{a}) = ((R_f^{(0)}[\overline{\exts{a}}^{\epsilon_f}])^\downarrow,R_f^{(0)}[\overline{\exts{a}}^{\epsilon_f}])$;
		\item $g_{R_g}: \mathbb{L}^{n_g}\to \mathbb{L}$ be defined by the assignment  $g_{R_g}(\overline{a}) = (R_g^{(0)}[\overline{\exts{a}}^{\epsilon_g}],(R_g^{(0)}[\overline{\exts{a}}^{\epsilon_g}])^\uparrow)$.  
	\end{enumerate}
	  	
\end{definition}

\begin{prop}[cf.\ \cite{greco2018algebraic} Proposition 21]\label{prop:F plus is L star algebra}
	If $\mathbb{F}$ is an $\mathcal{L}$-frame,  then $\mathbb{F}^+$ is a complete $\mathcal{L}$-algebra.
\end{prop}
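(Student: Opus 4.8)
The plan is to verify the two things that "complete $\mathcal{L}$-algebra" requires: first, that the lattice reduct $\mathbb{L} = \mathbb{W}^+$ is a complete lattice, and second, that the operations $f_{R_f}$ and $g_{R_g}$ are well defined (i.e.\ land in $\mathbb{W}^+$) and satisfy the normality conditions of Definition \ref{def:LE}, namely that each $f_{R_f}$ preserves arbitrary (not merely finite) joins in each coordinate where $\epsilon_f(i) = 1$ and sends arbitrary meets to joins in each coordinate where $\epsilon_f(i) = \partial$, and dually for $g_{R_g}$. Completeness of $\mathbb{W}^+$ is already recorded in the discussion following Definition \ref{def:polarity}: it is isomorphic to the lattice of Galois-stable subsets of $W$ under $\gamma_N$, with $\bigvee S = \gamma_N(\bigcup S)$, so only the operations need work.

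First I would check well-definedness. For $f_{R_f}(\overline{a})$ we must see that $(R_f^{(0)}[\overline{\exts{a}}^{\epsilon_f}])^\downarrow$ and $R_f^{(0)}[\overline{\exts{a}}^{\epsilon_f}]$ form a Galois pair, i.e.\ that $R_f^{(0)}[\overline{\exts{a}}^{\epsilon_f}]$ is a stable subset of $U$; but each $\exts{a}^{\epsilon_f(i)}$ is a stable subset of the appropriate factor ($\exts{a}$ if $\epsilon_f(i)=1$, $\ints{a}$ if $\epsilon_f(i)=\partial$), so this is exactly the content of Lemma \ref{lem:Ri closed}(1) applied with $\overline{X} = \overline{\exts{a}}^{\epsilon_f}$. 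Symmetrically, $g_{R_g}(\overline{a})$ is well defined by Lemma \ref{lem:Ri closed}(2). So $f_{R_f}: \mathbb{L}^{n_f}\to \mathbb{L}$ and $g_{R_g}:\mathbb{L}^{n_g}\to\mathbb{L}$ are genuine maps.

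Next, the key computation: complete join-preservation of $f_{R_f}$ in a coordinate $i$ with $\epsilon_f(i)=1$. Fix the other arguments and a family $\{a_k\}_{k\in K}\subseteq \mathbb{L}$; write $b = \bigvee_k a_k$, so $\exts{b} = \gamma_N(\bigcup_k \exts{a_k})$. Using that $R_f^{(0)}[-]$ in the $i$-th slot is, by Lemma \ref{lem:Ri closed}(1), invariant under replacing that slot by its Galois closure (this is the "$R^{(j)}[\overline{X}] = R^{(j)}[\overline{X}^{j,i}_{\gamma_N(X_i)}]$" reformulation of Lemma \ref{lem:two definitions of compatibility}), one gets $R_f^{(0)}[\ldots,\exts{b},\ldots] = R_f^{(0)}[\ldots,\bigcup_k\exts{a_k},\ldots] = \bigcap_k R_f^{(0)}[\ldots,\exts{a_k},\ldots]$, the last equality because $S^{(0)}[-]$ turns unions in any argument into intersections directly from its definition. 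Taking $(-)^\downarrow$ of both sides and using that $(-)^\downarrow$ turns intersections of stable sets into joins in $\mathbb{W}^+$ (again from the description of $\bigvee$ in $\mathbb{W}^+$, since $\mathbb{W}^+$ is also the dual of the stable subsets of $U$), we obtain $\exts{f_{R_f}(\ldots,b,\ldots)} = \bigvee_k \exts{f_{R_f}(\ldots,a_k,\ldots)}$, as desired; the $\epsilon_f(i)=\partial$ case is identical after replacing $\exts{a}$ by $\ints{a}$ and meets by joins, and the $g$-cases are order-dual. Monotonicity/antitonicity in each coordinate, and the boundary clauses ($f_{R_f}=\bot$ at a $\bot$ or $\top$ argument, $g_{R_g}=\top$ at a $\top$ or $\bot$ argument) fall out as the $K=\varnothing$ instances of the same computation.

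The main obstacle is purely notational rather than conceptual: keeping the bookkeeping of the $\overline{\exts{a}}^{\epsilon_f}$-notation, the coordinatewise Galois dualization, and the indices in $R_f^{(i)}$ versus $R_f^{(0)}$ straight, so that the invocation of Lemma \ref{lem:Ri closed} / Lemma \ref{lem:two definitions of compatibility} in the $i$-th coordinate is correctly set up with all other coordinates held fixed. Once the coordinatewise closure-invariance is in place, the preservation of arbitrary joins is a one-line consequence of the definition of $S^{(0)}[-]$ and the formula for joins in $\mathbb{W}^+$. I would therefore structure the proof as: (1) cite completeness of $\mathbb{W}^+$; (2) well-definedness via Lemma \ref{lem:Ri closed}; (3) the coordinatewise computation above for $f$, with $g$ handled by duality.
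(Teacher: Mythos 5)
Your proposal is correct and follows exactly the standard argument for this fact, which the paper itself does not reprove but defers to \cite{greco2018algebraic} (Proposition 21): well-definedness of the operations via the stability of $R_f^{(0)}[\overline{X}]$ and $R_g^{(0)}[\overline{Y}]$ (Lemma \ref{lem:Ri closed}), closure-invariance in each coordinate (Lemma \ref{lem:two definitions of compatibility}), and the observation that $S^{(0)}[-]$ turns unions into intersections, which together give complete (dual) operator behaviour coordinatewise, with the bound clauses as the empty-family instances. No gaps; this matches the intended proof.
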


\subsection{Algebraic and relational models}
Specializing the usual interpretation of $\mathcal{L}$-formulas into $\mathcal{L}$-algebras to complex algebras of $\mathcal{L}$-frames yields the following.  
\begin{definition}\label{def:models and val}
	For any $\mathcal{L}$-frame $\mathbb{F}$ and any $V:\mathsf{Prop}\to\mathbb{F}^+$, the unique homomorphic extension of $V$, denoted also $V:\mathcal{L}\to\mathbb{F}^+$, is defined recursively as follows:
\begin{center}
	\begin{tabular}{r c l}
		$V(p)$ & $ = $ & $(\exts{V(p)}, \ints{V(p)})$\\
		$V(\top)$ & $ = $ & $(W, W^{\uparrow})$ \\
		$V(\bot)$ & $ = $ & $(U^{\downarrow}, U)$\\
		$V(\phi\wedge\psi)$ & $ = $ & $(\exts{V(\phi)}\cap \exts{V(\psi)}, (\exts{V(\phi)}\cap \exts{V(\psi)})^{\uparrow})$\\
		$V(\phi\vee\psi)$ & $ = $ & $((\ints{V(\phi)}\cap \ints{V(\psi)})^{\downarrow}, \ints{V(\phi)}\cap \ints{V(\psi)})$\\
		$V(g(\overline{\phi}))$ & $ = $ & $(R_g^{(0)}[\overline{\ints{V(\phi)}^{\epsilon_g}}], (R_g^{(0)}[\overline{\ints{V(\phi)}^{\epsilon_g}}])^{\uparrow})$\\
		$V(f(\overline{\phi}))$ & $ = $ & $((R_f^{(0)}[\overline{\exts{V(\phi)}^{\epsilon_f}}])^{\downarrow}, R_f^{(0)}[\overline{\exts{V(\phi)}^{\epsilon_f}}])$\\
	\end{tabular}
\end{center}
As usual for any $\mathcal{L}$-sequent $\varphi\vdash\psi$, we say that $\mathbb{F}^+,V\models\varphi\vdash\psi$ if $V(\varphi)\leq V(\psi)$, i.e.\ $\exts{V(\varphi)}\subseteq\exts{V(\psi)}$ or equivalently $\ints{V(\psi)}\subseteq\ints{V(\varphi)}$. The $\mathcal{L}$-sequent $\varphi\vdash\psi$ is \emph{valid} on $\mathbb{F}^+$, in symbols $\mathbb{F}^+\models\varphi\vdash\psi$, if $\mathbb{F}^+,V\models\varphi\vdash\psi$ for every valuation $V:\mathsf{Prop}\to\mathbb{F}^+$.
\end{definition}
In the remainder of the paper we will abbreviate $\exts{V(\psi)}$ as $\exts{\psi}$ and $\ints{V(\psi)}$ as $\ints{\psi}$ when $V$ is clear from the context. 

\begin{definition}
	An $\mathcal{L}$\emph{-model} is a tuple $\mathbb{M}=(\mathbb{F},V)$ such that $\mathbb{F}$ is an $\mathcal{L}$-frame and $V:\mathsf{Prop}\to\mathbb{F}^+$ is a valuation.
\end{definition}
Unraveling the recursive definition of the unique homomorphic extension of a given valuation yields the following:
\begin{definition}
For any $\mathcal{L}$-model $\mathbb{M}=(\mathbb{F},V)$, the \emph{satisfaction} and \emph{co-satisfaction} relations, ${\Vdash}\subseteq W\times \mathcal{L}$ and ${\succ}\subseteq U\times\mathcal{L}$, are defined by simultaneous recursion as follows:
\begin{center}
	\begin{tabular}{llll}
		$\mathbb{M}, w \Vdash p$ & iff & $w\in \exts{V(p)}$  & \\
		$\mathbb{M}, u \succ p$ & iff & $u\in \ints{V(p)}$  &\\
		$\mathbb{M}, w \Vdash\top$ &  & always \\
		$\mathbb{M}, u \succ \top$ & iff &   $w N u$ for all $w\in W$\\
		$\mathbb{M}, u \succ  \bot$ &  & always \\
		$\mathbb{M}, w \Vdash \bot $ & iff & $w N u$ for all $u\in U$\\
		$\mathbb{M}, w \Vdash \phi\wedge \psi$ & iff & $\mathbb{M}, w \Vdash \phi$ and $\mathbb{M}, w \Vdash  \psi$ & \\
		$\mathbb{M}, u \succ \phi\wedge \psi$ & iff & for all $w\in W$, if $\mathbb{M}, w \Vdash \phi\wedge \psi$, then $w N u$\\
		$\mathbb{M}, u \succ \phi\vee \psi$ & iff &  $\mathbb{M}, u \succ \phi$ and $\mathbb{M}, u \succ  \psi$ &\\
		$\mathbb{M}, w \Vdash \phi\vee \psi$ & iff & for all $u\in U$, if $\mathbb{M}, u \succ \phi\vee \psi$, then $w N u$  & \\
		$\mathbb{M}, w \Vdash g(\overline{\phi})$ & iff & for all $\overline{u}\in U^{\epsilon_g}$, if $\mathbb{M}, u^{\epsilon_g(i)} \succ^{\epsilon_g(i)} \phi$ for every $1\leq i\leq n_g$, then $R_g (w, \overline{u})$& \\
		$\mathbb{M}, u \succ g(\overline{\phi})$ & iff & for all $w\in w$, if $\mathbb{M}, w \Vdash g(\overline{\phi})$, then $w Nu$.\\
		$\mathbb{M}, u \succ f(\overline{\phi})$ & iff & for all $\overline{w}\in U^{\epsilon_f}$, if $\mathbb{M}, w^{\epsilon_f(i)} \Vdash^{\epsilon_f(i)} \phi$ for every $1\leq i\leq n_f$, then $R_f (u, \overline{w})$& \\
		$\mathbb{M}, w \Vdash f(\overline{\phi})$ & iff & for all $u\in U$, if $\mathbb{M}, u \succ g(\overline{\phi})$, then $w Nu$.\\
		%
	\end{tabular}
\end{center}

In the table above, $u^{\epsilon_g(i)}\in U$ if $\epsilon_g(i) = 1$ and $u^{\epsilon_g(i)}\in W$ if $\epsilon_g(i) = \partial$; likewise,
$w^{\epsilon_f(i)}\in W$ if $\epsilon_f(i) = 1$ and $w^{\epsilon_f(i)}\in U$ if $\epsilon_f(i) = \partial$. Moreover, $\succ^{\epsilon_g(i)} = \succ$ if $\epsilon_g(i) = 1$ and $\succ^{\epsilon_g(i)} = \Vdash$ if $\epsilon_g(i) = \partial$; likewise, $\Vdash^{\epsilon_f(i)} = \Vdash$ if $\epsilon_f(i) = 1$ and $\Vdash^{\epsilon_f(i)} = \succ$ if $\epsilon_g(i) = \partial$.

Moreover, for any $\mathcal{L}$-sequent $\varphi\vdash\psi$, we write 
\begin{center}
	\begin{tabular}{llll}
		$\mathbb{M}\models \varphi\vdash\psi$ & iff & for every $w\in W$ if $\mathbb{M},w\Vdash\varphi$ then $\mathbb{M},w\Vdash\psi$  & \\
		& iff & for every $u\in U$ if $\mathbb{M},u\succ\psi$ then $\mathbb{M},u\succ\varphi$  & \\
	\end{tabular}
\end{center}
The $\mathcal{L}$-sequent $\varphi\vdash\psi$ is \emph{valid} on $\mathbb{F}$, in symbols $\mathbb{F}\models\varphi\vdash\psi$, if $\mathbb{F},V\models\varphi\vdash\psi$ for every valuation $V:\mathsf{Prop}\to\mathbb{F}^+$.
\end{definition}
The following proposition can be straightforwardly verified.
\begin{prop}\label{prop:frames to algebras}
	For every $\mathcal{L}$-frame $\mathbb{F}$ and every $\mathcal{L}$-sequent $\varphi\vdash\psi$, $$\mathbb{F}\models\varphi\vdash\psi\quad\iff\quad\mathbb{F}^+\models\varphi\vdash\psi.$$
\end{prop}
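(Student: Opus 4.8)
The plan is to reduce the claimed equivalence of validity to the standard ``truth lemma'', proved by simultaneous induction on the complexity of $\phi$: for every $\mathcal{L}$-model $\mathbb{M}=(\mathbb{F},V)$, every $w\in W$ and every $u\in U$,
\[
\mathbb{M},w\Vdash\phi\iff w\in\exts{V(\phi)}\qquad\text{and}\qquad\mathbb{M},u\succ\phi\iff u\in\ints{V(\phi)}.
\]
Granting this, the proposition is immediate: by definition $\mathbb{F}^+,V\models\varphi\vdash\psi$ iff $V(\varphi)\leq V(\psi)$ iff $\exts{V(\varphi)}\subseteq\exts{V(\psi)}$, and the truth lemma rewrites the last inclusion as ``for all $w\in W$, $\mathbb{M},w\Vdash\varphi$ implies $\mathbb{M},w\Vdash\psi$'', that is, as $\mathbb{M}\models\varphi\vdash\psi$; quantifying over all valuations $V$ then yields $\mathbb{F}\models\varphi\vdash\psi$ iff $\mathbb{F}^+\models\varphi\vdash\psi$.

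For the induction I would first isolate a uniform pattern: for each connective the $\succ$-clause of a formula is defined from the $\Vdash$-clause of the \emph{same} formula by applying $N^\uparrow[\cdot]$, and since every $a\in\mathbb{W}^+$ satisfies $\ints{a}=\exts{a}^\uparrow$, the $\succ$-half of the truth lemma for $\phi$ follows at once from the $\Vdash$-half for $\phi$. Hence it suffices to verify the $\Vdash$-half. The base case $\phi=p$ and the cases $\phi=\top,\bot,\ \psi_1\wedge\psi_2,\ \psi_1\vee\psi_2$ are then just unwindings of the clauses defining $V$ on $\mathbb{F}^+$ together with $\exts{a}=\ints{a}^\downarrow$.

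The only clauses needing attention are those for $f\in\mathcal{F}$ and $g\in\mathcal{G}$; I treat $\phi=g(\overline{\psi})$, the case $\phi=f(\overline{\psi})$ being entirely dual. Using the induction hypothesis together with the conventions fixing $u^{\epsilon_g(i)}$ and $\succ^{\epsilon_g(i)}$, one checks --- uniformly in whether $\epsilon_g(i)=1$ or $\epsilon_g(i)=\partial$ --- that $\mathbb{M},u^{\epsilon_g(i)}\succ^{\epsilon_g(i)}\psi_i$ holds iff $u^{\epsilon_g(i)}\in\ints{V(\psi_i)}^{\epsilon_g(i)}$. Hence the clause defining $\mathbb{M},w\Vdash g(\overline{\psi})$ asserts exactly that $R_g(w,\overline{u})$ holds for all $\overline{u}\in\overline{\ints{V(\psi)}^{\epsilon_g}}$, i.e.\ that $w\in R_g^{(0)}[\overline{\ints{V(\psi)}^{\epsilon_g}}]=\exts{V(g(\overline{\psi}))}$, by the definition of $R_g^{(0)}$ in Subsection \ref{ssec:notation} and of $g_{R_g}$ in Definition \ref{def:complexalg}. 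That $V(g(\overline{\psi}))$, and likewise $V(f(\overline{\psi}))$, is a genuine element of $\mathbb{W}^+$ --- the only place where the stability requirements \eqref{eq:compa1} and \eqref{eq:compa2}, equivalently Lemma \ref{lem:Ri closed}, enter --- is already recorded in Proposition \ref{prop:F plus is L star algebra}, so nothing further is needed. The main obstacle is merely notational: carrying the order-type bookkeeping through all coordinates at once so that the $\succ^{\epsilon(i)}$/$\Vdash^{\epsilon(i)}$ conventions line up with the $\ints{-}^{\epsilon(i)}$/$\exts{-}^{\epsilon(i)}$ conventions.
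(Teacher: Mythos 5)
Your proof is correct, and it is exactly the verification the paper leaves implicit: Proposition \ref{prop:frames to algebras} is stated with the remark that it ``can be straightforwardly verified'', the satisfaction/co-satisfaction clauses having been introduced precisely as the unravelling of the homomorphic extension of the valuation, so a truth lemma by simultaneous induction plus the definition of validity is the intended (and only natural) argument.

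One caveat about your ``uniform pattern'': it is not the case that for every connective the $\succ$-clause is obtained from the $\Vdash$-clause of the same formula by applying $N^\uparrow[\cdot]$. This holds for $\top$, $\wedge$ and $g\in\mathcal{G}$ (and trivially for $p$, by stability of $V(p)$), but for $\bot$, $\vee$ and $f\in\mathcal{F}$ the roles are reversed: there the $\succ$-clause is primary and the $\Vdash$-clause is $N^\downarrow[\cdot]$ of it. Consequently the reduction ``it suffices to verify the $\Vdash$-half'' cannot be applied literally to those connectives --- the $\Vdash$-clause for $f(\overline{\psi})$ (and for $\vee$) quantifies over the $\succ$-set, so one must first establish the $\succ$-half from the induction hypothesis and the definition of $f_{R_f}$, and then obtain the $\Vdash$-half using $\exts{a}=\ints{a}^{\downarrow}$ (stability of $V(f(\overline{\psi}))$, again via Lemma \ref{lem:Ri closed}). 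Your explicit treatment of $g(\overline{\psi})$ together with the declared ``entirely dual'' treatment of $f(\overline{\psi})$ in effect does exactly this, so the slip is presentational rather than mathematical; just restate the pattern symmetrically (meet-type connectives: $\Vdash$ primary and $\succ$ obtained by $N^\uparrow$; join-type connectives: $\succ$ primary and $\Vdash$ obtained by $N^\downarrow$).
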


As in the Boolean case, each $\mathcal{L}$-model $\mathbb{M}$  can be seen as a two-sorted first-order structure. Accordingly, we define the correspondence language as follows.

Let $L_1$ be the two-sorted first-order language with equality  built over the denumerable and disjoint sets of individual variables $W$ and $U$, with binary relation symbol $N$, and $(n_f+1)$-ary relation symbols $R_{f}$ for each $f\in \mathcal{F}$, and $(n_g+1)$-ary relation symbols $R_{g}$ for each $g\in \mathcal{G}$ and two unary predicate symbols $P_{\exts{p}}, P_{\ints{p}}$ for each propositional variable $p \in \mathsf{Prop}$.\footnote{The intended interpretation links $P_{\exts{p}}$ and $ P_{\ints{p}}$ in the way suggested by the definition of $\mathcal{L}$-valuations. Indeed, every $p \in \mathsf{Prop}$ is mapped to a pair $(\exts{p}, \ints{p})$ of Galois-stable sets as indicated in Definition \ref{def:models and val}. Accordingly, the interpretation of pairs $(P_{\exts{p}}, P_{\ints{p}})$ of predicate symbols is restricted to such pairs of Galois-stable sets, and hence the interpretation of universal second-order quantification is also restricted to range over such sets.}


\begin{definition}\label{def:st}The \emph{standard translation} of $\mathcal{L}$ into $L_1$ is given by the following recursion:
	\begin{center}
	{\small{
			\begin{tabular}{ll}
				$\mathrm{ST}_x(\bot) := \forall y(xNy)$ & $\mathrm{ST}_y(\bot) := y  = y$ \\
				$\mathrm{ST}_x(\top) := x = x$ & $\mathrm{ST}_y(\top) := \forall x(xNy)$\\
				$\mathrm{ST}_x(p) := P_{\exts{p}}(x)$& $\mathrm{ST}_y(p) :=  P_{\ints{p}}(y)$\\
				%
				%
				$\mathrm{ST}_x(\phi \vee \psi) := \forall y[\mathrm{ST}_y(\phi\vee \psi)\rightarrow xNy]$& $\mathrm{ST}_y(\phi \vee \psi) := \mathrm{ST}_y(\phi) \wedge \mathrm{ST}_y(\psi)$\\
				$\mathrm{ST}_x(\phi \wedge \psi) := \mathrm{ST}_x(\phi) \wedge \mathrm{ST}_x(\psi)$& $\mathrm{ST}_y(\phi \wedge \psi) := \forall x[\mathrm{ST}_x(\phi\wedge \psi)\rightarrow xNy]$\\
				$\mathrm{ST}_x(f(\overline{\phi})) := \forall y [\mathrm{ST}_y(f(\overline{\phi}))\rightarrow xNy]$& $\mathrm{ST}_y(f(\overline{\phi})) := \forall \overline{x^{\epsilon_f}}[\left(\bigwedge_{1\leq i \leq n_f}\mathrm{ST}_{x^{\epsilon_f(i)}}(\phi_i)\right)\rightarrow R_{f}(y,\overline{w^{\epsilon_f}})]$\\
				$\mathrm{ST}_x(g(\overline{\phi})) := \forall \overline{y^{\epsilon_g}}[\left(\bigwedge_{1\leq i \leq n_g}\mathrm{ST}_{y^{\epsilon_g(i)}}(\phi_i)\right)\rightarrow R_{g}(x,\overline{u^{\epsilon_g}})]$ & $\mathrm{ST}_y(g(\overline{\phi})) := \forall x [\mathrm{ST}_x(g(\overline{\phi}))\rightarrow xNy]$\\
			\end{tabular}
	}}
\end{center}
\end{definition}

%
%

The following lemma is proved by a routine induction.

\begin{lemma}\label{lemma:Standard:Translation}
	For any $\mathcal{L}$-model $\mathbb{M}$, any $\mathcal{L}$-frame $\mathbb{F}$,  any $w \in W$, $u \in U$ and for all $\mathcal{L}$-formulas $\phi$ and $\psi$,
	\begin{enumerate}
		
		\item $\mathbb{M}, w \Vdash \phi$ iff $\mathbb{M} \models \mathrm{ST}_x(\phi)[x:= w]$
		
		\item $\mathbb{M}, u \succ \psi$ iff $\mathbb{M} \models \mathrm{ST}_y(\psi)[y:= u]$
		
		\item
		\begin{eqnarray*}
			\mathbb{M} \Vdash \phi \vdash \psi &\mbox{iff} &\mathbb{M} \models \forall x \forall y [(\mathrm{ST}_x(\phi) \wedge  \mathrm{ST}_y(\psi)) \rightarrow xNy]\\
			&\mbox{iff} &\mathbb{M} \models \forall x  [\mathrm{ST}_x(\phi) \rightarrow   \mathrm{ST}_x(\psi) ]\\
			&\mbox{iff} &\mathbb{M} \models \forall y [\mathrm{ST}_y(\psi) \rightarrow   \mathrm{ST}_y(\phi)].
		\end{eqnarray*}

		\item\begin{eqnarray*}
			\mathbb{F} \Vdash \phi \vdash \psi &\mbox{iff} &\mathbb{F} \models \forall \overline{P} \forall x \forall y [(\mathrm{ST}_x(\phi) \wedge  \mathrm{ST}_y(\psi)) \rightarrow xNy]\\
			&\mbox{iff} &\mathbb{F} \models \forall\overline{P}  \forall x  [\mathrm{ST}_x(\phi) \rightarrow   \mathrm{ST}_x(\psi) ]\\
			&\mbox{iff} &\mathbb{F} \models \forall\overline{P}  \forall y [\mathrm{ST}_y(\psi) \rightarrow   \mathrm{ST}_y(\phi)].
		\end{eqnarray*}
		where $\overline{P}$ are  the vectors of all predicate symbols corresponding to propositional variables 
		occurring in $\mathrm{ST}_x(\phi)$, $\mathrm{ST}_y(\phi)$, $\mathrm{ST}_x(\psi)$ and $\mathrm{ST}_y(\psi)$.
		
	\end{enumerate}
	
\end{lemma}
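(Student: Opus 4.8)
The plan is to prove Lemma~\ref{lemma:Standard:Translation} by simultaneous induction on the complexity of the $\mathcal{L}$-formula, establishing items (1) and (2) together, and then deriving items (3) and (4) as consequences of (1) and (2) together with the definitions of validity on models and frames.

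\textbf{Base cases and propositional connectives.} First I would check the atomic cases: for $p\in\mathsf{Prop}$, $\mathbb{M},w\Vdash p$ iff $w\in\exts{V(p)}$ iff $P_{\exts{p}}(x)$ holds under $[x:=w]$, which is $\mathrm{ST}_x(p)$; dually for $u\succ p$ with $\mathrm{ST}_y(p)=P_{\ints{p}}(y)$. For $\top$ and $\bot$ one reads off the defining clauses directly: e.g.\ $\mathbb{M},w\Vdash\top$ always, matching $\mathrm{ST}_x(\top)=(x=x)$, while $\mathbb{M},u\succ\top$ iff $wNu$ for all $w$, matching $\mathrm{ST}_y(\top)=\forall x(xNy)$. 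For $\wedge$ and $\vee$, one side (the ``native'' one for each connective) is an immediate conjunction, and the other side is the Galois-closure clause, which by definition of $\Vdash$ and $\succ$ quantifies over the opposite sort and requires $N$-relatedness; this matches exactly the $\mathrm{ST}$-clauses of the form $\forall y[\mathrm{ST}_y(\varphi)\to xNy]$. In each inductive step one simply invokes the induction hypothesis for the immediate subformulas.

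\textbf{Modal connectives.} For $g(\overline{\phi})$: by definition, $\mathbb{M},w\Vdash g(\overline{\phi})$ iff for all $\overline{u}\in U^{\epsilon_g}$, if $\mathbb{M},u^{\epsilon_g(i)}\succ^{\epsilon_g(i)}\phi_i$ for every $i$ then $R_g(w,\overline{u})$. Applying the induction hypothesis to each $\phi_i$ — using item (1) when $\epsilon_g(i)=\partial$ (so the relevant relation is $\Vdash$ over the $W$-sort) and item (2) when $\epsilon_g(i)=1$ (so it is $\succ$ over the $U$-sort) — replaces each $\mathbb{M},u^{\epsilon_g(i)}\succ^{\epsilon_g(i)}\phi_i$ by $\mathrm{ST}_{u^{\epsilon_g(i)}}(\phi_i)$, yielding precisely $\mathrm{ST}_x(g(\overline{\phi}))$. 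The co-satisfaction clause $\mathbb{M},u\succ g(\overline{\phi})$ is again the Galois-closure clause and matches $\mathrm{ST}_y(g(\overline{\phi}))=\forall x[\mathrm{ST}_x(g(\overline{\phi}))\to xNy]$. The case of $f(\overline{\phi})$ is entirely dual, with the roles of $\Vdash$/$\succ$ and of $W$/$U$ swapped and with $R_f\subseteq U\times W^{\epsilon_f}$.

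\textbf{Items (3) and (4).} Item (3) follows by unwinding $\mathbb{M}\models\varphi\vdash\psi$: the first equivalence is the definition $\exts{V(\varphi)}\subseteq\exts{V(\psi)}$ rephrased via (1), i.e.\ $\forall x(\mathrm{ST}_x(\varphi)\to\mathrm{ST}_x(\psi))$; the equivalence with $\forall x\forall y[(\mathrm{ST}_x(\varphi)\wedge\mathrm{ST}_y(\psi))\to xNy]$ uses that $\exts{V(\psi)}$ is Galois-stable, so $w\in\exts{V(\psi)}$ iff $wNu$ for all $u\in\ints{V(\psi)}$, combined with (2); the third form is the dual, using that $\ints{V(\varphi)}$ is stable. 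Item (4) is then obtained from (3) simply by prefixing the second-order universal quantifier $\forall\overline{P}$ over all admissible (Galois-stable) interpretations of the predicate symbols, since frame-validity is validity under every valuation. I do not expect any genuine obstacle here; the only point demanding care is the bookkeeping of order-types $\epsilon_f,\epsilon_g$ — making sure that in the inductive step for $f$ and $g$ the correct satisfaction relation ($\Vdash$ vs.\ $\succ$) and the correct sort of variable are paired up according to whether $\epsilon(i)=1$ or $\epsilon(i)=\partial$ — and the routine verification that the standard-translation clauses were written with exactly the matching quantifier patterns. Hence the lemma is, as the text says, proved by a routine induction.
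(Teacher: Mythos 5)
Your proposal is correct and is precisely the ``routine induction'' the paper alludes to (the paper gives no further detail): a simultaneous induction on formula complexity for items (1)--(2), with careful pairing of $\Vdash$/$\succ$ and sorts according to the order-types in the $f$- and $g$-cases, and items (3)--(4) obtained by unwinding validity via Galois-stability and quantification over admissible valuations. No gaps; this matches the intended argument.
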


\section{Constructions and morphisms of LE-frames}
\label{sec:constructions}
In the present section we define morphisms, co-products, filter-ideal extensions and ultrapowers of LE-frames. Our approach builds on the category theoretic framework for polarities developed in \cite{moshier2016relational}. We define morphisms as duals of complete homomorphisms of complete LE-algebras. We also define p-morphic images and generated subframes of LE-frames using the dual notions of injective and surjective complete homomorphisms of their associated complex algebras. Throughout this section, we fix an arbitrary LE-signature $\mathcal{L}=\mathcal{L}(\mathcal{F},\mathcal{G})$.
\subsection{Co-products of $\mathcal{L}$-frames}

Let $\{\mathbb{F}_i\mid i\in I\}$ be a family of $\mathcal{L}$-frames, where $\mathbb{F}_i = (W_i, U_i, N_i, \mathcal{R}_\mathcal{F}^{i}, \mathcal{R}_\mathcal{G}^{i})$, and $\mathcal{R}_\mathcal{F}^{i}: = \{R_f^{i}\mid f\in \mathcal{F}\}$ and $\mathcal{R}_\mathcal{G}^{i}: = \{R_g^{i}\mid g\in \mathcal{G}\}$ for each $i\in I$. We let
\[\coprod_{i\in I}\mathbb{F}_i: = (\coprod_{i\in I}W_i, \coprod_{i\in I}U_i, \coprod_{i\in I}N_i, \coprod_{i\in I}\mathcal{R}_\mathcal{F}^{i}, \coprod_{i\in I}\mathcal{R}_\mathcal{G}^{i}), \]
where $\coprod_{i\in I}W_i $ and $\coprod_{i\in I}U_i$ denote the usual disjoint unions of sets (let $\iota_i: W_i\to \coprod_{i\in I}W_i$ and $\gamma_i: U_i\to \coprod_{i\in I}U_i$ denote the canonical injections),
\[\coprod_{i\in I}N_i\subseteq \left(\coprod_{i\in I}W_i\right)\times \left(\coprod_{i\in I}U_i\right)\]

\[\coprod_{i\in I}N_i: = \left(\bigcup_{i\in I}(\iota_i, \gamma_i)[N_i]\right)\cup \left(\bigcup_{i\neq j} (\iota_i, \gamma_j)[W_i\times U_j]\right),\]

where $(\iota_i, \gamma_i)[N_i]: = \{(\iota_i(w), \gamma_i(u))\mid (w, u)\in N_i\}$, and $(\iota_i, \gamma_j)[W_i\times U_j]: = \{(\iota_i(w), \gamma_j(u))\mid (w, u)\in W_i\times U_j\}$. For every $f\in \mathcal{F}$ of arity $n_f = n$,

\[\coprod_{i\in I}R^{i}_f\subseteq \left(\coprod_{i\in I}U_i\right)\times \left(\coprod_{i\in I}W_i\right)^{\epsilon_f}\]
\[\coprod_{i\in I}R^{i}_f: =  \left(\bigcup_{i\in I}(\gamma_i, (\iota_i)^{\epsilon_f})[R_f^{i}]\right)\cup \left(\bigcup_{(i, \overline{j})\in I^{n+1} \text{ not constant}} (\gamma_i, (\iota_{\overline{j}})^{\epsilon_f}) [U_i\times (W_{\overline{j}})^{\epsilon_f}]\right)\]
where $(\gamma_i, (\iota_i)^{\epsilon_f})[R_f^{i}]: = \{(\gamma_i(u), (\iota_i)^{\epsilon_f}(\overline{w^{\epsilon_f}}))\mid (u, \overline{w^{\epsilon_f}})\in R^{i}_f\}$

\begin{lemma}
	For any family $\{\mathbb{F}_i\mid i\in I\}$ of $\mathcal{L}$-frames, 
	$$\left(\coprod_{i\in I}\mathbb{F}_i\right)^+\cong \prod_{i\in I}\left(\mathbb{F}_i\right)^+. $$
\end{lemma}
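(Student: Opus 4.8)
The plan is to exhibit an explicit isomorphism between the complex algebra of the co-product and the product of the complex algebras, by chasing through the definitions of $(-)^+$ and of the co-product construction. First I would note that, at the level of the underlying lattices, $\left(\coprod_{i\in I}\mathbb{W}_i\right)^+$ is the concept lattice of the polarity $\left(\coprod_i W_i, \coprod_i U_i, \coprod_i N_i\right)$, and I would identify its Galois-stable sets. The key observation is that, because of the way $\coprod_i N_i$ is defined (every cross-component pair $(w,u)$ with $w\in W_i$, $u\in U_j$, $i\neq j$, belongs to the relation), for any $X\subseteq\coprod_i W_i$ we have $X^{\uparrow}=\coprod_i (X\cap W_i)^{\uparrow_i}$ whenever $X$ meets every component, and more generally the Galois closure operator decomposes component-wise on the "interesting" part; dually for $U$. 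From this one extracts a lattice isomorphism $\exts{a}\mapsto (\exts{a}\cap W_i)_{i\in I}$, i.e. a stable set of the co-product corresponds exactly to a tuple of stable sets of the components, with meets and joins computed coordinatewise. I would phrase this as: the map $a\mapsto (a_i)_{i\in I}$, where $\exts{a_i}:=\exts{a}\cap W_i$ and $\ints{a_i}:=\ints{a}\cap U_i$, is a complete lattice isomorphism $\left(\coprod_i\mathbb{W}_i\right)^+\to\prod_i\mathbb{W}_i^+$.

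Next I would check that this isomorphism commutes with the additional operations $f_{R_f}$ and $g_{R_g}$. Fix $f\in\mathcal{F}$ of arity $n$ and a tuple $\overline{a}$ of stable sets of the co-product; one must show that $\left(f_{\coprod R_f^i}(\overline{a})\right)_i=f_{R_f^i}(\overline{a_i})$ for each $i$, i.e. that
\[
\left(\textstyle\coprod_j R_f^j\right)^{(0)}\!\left[\overline{\exts{a}}^{\epsilon_f}\right]\cap U_i \;=\; \left(R_f^i\right)^{(0)}\!\left[\overline{\exts{a}\cap W_i}^{\epsilon_f}\right],
\]
and likewise for the extension part via $(-)^\downarrow$. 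This is a direct unravelling: $u\in U_i$ is $\coprod_j R_f^j$-related to a tuple $\overline{w^{\epsilon_f}}$ precisely when either all the $w$'s lie in the $i$-th component and the tuple is in $R_f^i$, or the tuple is "not constant" across components, in which case relatedness holds automatically. Since each $\exts{a_j}$ is stable and hence nonempty-behaving on its component, the condition "for all tuples drawn from $\overline{\exts{a}}^{\epsilon_f}$, $u$ is related to the tuple" reduces exactly to the condition over tuples drawn from the $i$-th component, because the cross-component tuples impose no constraint. A symmetric argument handles the $g_{R_g}$ operations using $\coprod R_g^i$ and the $(-)^\uparrow$ half of the definition. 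It remains to observe that $\top$ and $\bot$ are preserved, which is immediate from the description of the stable sets.

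I expect the main obstacle to be the careful bookkeeping of the order-type twists: the co-product relation $\coprod_i R_f^i$ lives in $\left(\coprod_i U_i\right)\times\left(\coprod_i W_i\right)^{\epsilon_f}$, so in coordinates where $\epsilon_f(k)=\partial$ the $k$-th factor is a copy of $\coprod_i U_i$ rather than $\coprod_i W_i$, and one must verify that the component-wise decomposition of stable sets is compatible with the $\epsilon$-indexed notation $\exts{a}^{\epsilon(i)}$ and with the defining clause that makes every genuinely cross-component tuple belong to the relation. Concretely, the subtle point is checking that $\left(\coprod_j R_f^j\right)^{(0)}$ applied to a tuple of stable sets "sees" only the diagonal-within-a-component part; this uses that each $\exts{a_j}$ (resp. $\ints{a_j}$) is a \emph{stable} set and in particular behaves well under the component Galois connection, together with the fact, provable from Lemma \ref{lem:Ri closed}, that $R_f^{(0)}$ applied to stable arguments is stable, so that the $(-)^{\downarrow}$ and $(-)^{\uparrow}$ wrappers in Definition \ref{def:complexalg} do not disturb the component-wise splitting. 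Once this compatibility is nailed down, the verification that all operations transport across the lattice isomorphism is routine, and the lemma follows.
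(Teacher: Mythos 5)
Your proposal is correct: the paper actually states this lemma without proof, and your argument — that the cross-component pairs in $\coprod_i N_i$ (and the ``not constant'' tuples in $\coprod_i R^i_f$, $\coprod_i R^i_g$) impose no constraints, so the Galois connection and the maps $R^{(0)}$ decompose component-wise, yielding the coordinatewise isomorphism $a\mapsto(\exts{a}\cap W_i,\ints{a}\cap U_i)_{i\in I}$ — is exactly the verification the paper leaves implicit. Two small simplifications: the identity $X^\uparrow=\coprod_i(X\cap W_i)^{\uparrow_i}$ holds unconditionally (no need for $X$ to meet every component, since an empty restriction just yields all of $U_i$), and the reduction of $\left(\coprod_j R^j_f\right)^{(0)}$ to the components is purely set-theoretic, so stability of the $\exts{a_j}$ is not needed there, only for well-definedness of the map into $\prod_i\mathbb{W}_i^+$.
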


\begin{example}\label{ex:coproduct}
	Consider the $\mathcal{L}$-frames $\mathbb{F}_i=(\mathbb{P}_i,R_i)$ for $i=1,2$, where $\mathbb{P}_i=(W_i,U_i,N_i)$ and $$W_i=\{a_i,b_i\}\quad U_i=\{x_i,y_i\}\quad N_i=\{(a_i,x_i),(b_i,y_i)\}\quad R_i=\{(a_i,y_i),(b_i,x_i)\}.$$
	\begin{center}
		\begin{tikzpicture}
		\filldraw[black] (0,0) circle (2 pt);
		\filldraw[black] (1,0) circle (2 pt);
		\filldraw[black] (0,1) circle (2 pt);
		\filldraw[black] (1,1) circle (2 pt);
		\draw[ultra thick] (0, 0) -- (0, 1);
		\draw[ultra thick] (1, 0) -- (1, 1);
		\draw[thick] (0, 0) -- (1, 1);
		\draw[thick] (1, 0) -- (0, 1);
		\draw (0, -0.3) node {{\large $a_1$}};
		\draw (0, 1.3) node {{\large $x_1$}};
		\draw (1, -0.3) node {{\large $b_1$}};
		\draw (1, 1.3) node {{\large $y_1$}};
		\draw (0.5, -0.8) node {{\large $\mathbb{F}_1$}};
		
		\filldraw[black] (3,0) circle (2 pt);
		\filldraw[black] (4,0) circle (2 pt);
		\filldraw[black] (3,1) circle (2 pt);
		\filldraw[black] (4,1) circle (2 pt);
		\draw[ultra thick] (3, 0) -- (3, 1);
		\draw[ultra thick] (4, 0) -- (4, 1);
		\draw[ thick] (3, 0) -- (4, 1);
		\draw[ thick] (4, 0) -- (3, 1);
		\draw (3, -0.3) node {{\large $a_2$}};
		\draw (3, 1.3) node {{\large $x_2$}};
		\draw (4, -0.3) node {{\large $b_2$}};
		\draw (4, 1.3) node {{\large $y_2$}};
		\draw (3.5, -0.8) node {{\large $\mathbb{F}_2$}};

		\filldraw[black] (6,0) circle (2 pt);
		\filldraw[black] (7,0) circle (2 pt);
		\filldraw[black] (8,0) circle (2 pt);
		\filldraw[black] (9,0) circle (2 pt);
		\filldraw[black] (6,1) circle (2 pt);
		\filldraw[black] (7,1) circle (2 pt);
		\filldraw[black] (8,1) circle (2 pt);
		\filldraw[black] (9,1) circle (2 pt);
		
		\draw[ultra thick] (6, 0) -- (6, 1);
		\draw[ultra thick] (7, 0) -- (7, 1);
		\draw[thick] (6, 0) -- (7, 1);
		\draw[thick] (7, 0) -- (6, 1);
		\draw (6, -0.3) node {{\large $a_1$}};
		\draw (6, 1.3) node {{\large $x_1$}};
		\draw (7, -0.3) node {{\large $b_1$}};
		\draw (7, 1.3) node {{\large $y_1$}};
		
		\draw[ultra thick] (8, 0) -- (8, 1);
		\draw[ultra thick] (9, 0) -- (9, 1);
		\draw[thick] (8, 0) -- (9, 1);
		\draw[ thick] (9, 0) -- (8, 1);
		\draw (8, -0.3) node {{\large $a_2$}};
		\draw (8, 1.3) node {{\large $x_2$}};
		\draw (9, -0.3) node {{\large $b_2$}};
		\draw (9, 1.3) node {{\large $y_2$}};
		\draw[very thick,dotted] (6, 0) -- (8, 1);
		\draw[very thick,dotted] (6, 0) -- (9, 1);
		\draw[very thick,dotted] (7, 0) -- (8, 1);
		\draw[very thick,dotted] (7, 0) -- (9, 1);
		\draw[very thick,dotted] (6, 1) -- (8, 0);
		\draw[very thick,dotted] (6, 1) -- (9, 0);
		\draw[very thick,dotted] (7, 1) -- (8, 0);
		\draw[very thick,dotted] (7, 1) -- (9, 0);
		\draw (7.5, -0.8) node {{\large $\mathbb{F}_1\uplus \mathbb{F}_2$}};
		\end{tikzpicture}
	\end{center}
\end{example}

\subsection{Morphisms of LE-frames}
The following definition is the counterpart of the notion of p-morphism in classical modal logic. It has been obtained as the dual counterpart of the notion of complete homomorphism of $\mathcal{L}$-algebras with an analogous argument as in \cite{dunn2005canonical}.
\begin{definition}\label{def:lehoms}
	Let $\mathcal{L}$ be an LE-language and $\mathbb{F}_1=(\mathbb{W}_1,\mathcal{R}^1_{\mathcal{F}},\mathcal{R}^1_{\mathcal{G}})$ and $\mathbb{F}_2=(\mathbb{W}_2,\mathcal{R}^2_{\mathcal{F}},\mathcal{R}^2_{\mathcal{G}})$ be $\mathcal{L}$-frames. A \emph{p-morphism} of $\mathcal{L}$-frames is a pair $(S,T):\mathbb{F}_1\to \mathbb{F}_2$ such that:
	\begin{itemize}
		\item[p1.]  $S\subseteq W_1\times U_2$ and $T\subseteq U_1\times W_2$;
		\item[p2.] $S^{(0)}[u]$ and $S^{(1)}[w]$ are Galois stable sets in $\mathbb{W}_1$ and $\mathbb{W}_2$ respectively, for every $u\in U_2$ and $w\in W_1$;
		\item[p3.] $T^{(0)}[w]$ and $T^{(1)}[u]$ are Galois stable sets in $\mathbb{W}_1$ and $\mathbb{W}_2$ respectively, for every $u\in U_1$ and $w\in W_2$;
		\item[p4.] $(T^{(0)}[w])^{\downarrow}\subseteq S^{(0)}[w^{\uparrow}]$ for every $w\in W_2$;
		\item[p5.] $T^{(0)}[(S^{(1)}[w])^\downarrow]\subseteq w^\uparrow$ for every $w\in W_1$;
		\item[p6.] $T^{(0)}[((R^2_f)^{(0)}[\overline{w}])^{\downarrow}]=(R^1_f)^{(0)}[\overline{((T^{\epsilon_f})^{(0)}[w])^{\partial}}]$ for every $R^i_f\in\mathcal{R}^i_{\mathcal{F}}$, where $T^{1}=T$ and $T^{\partial}=S$;
		\item[p7.] $S^{(0)}[((R^2_g)^{(0)}[\overline{u}])^{\uparrow}]=(R^1_g)^{(0)}[\overline{((S^{\epsilon_g})^{(0)}[u])^{\partial}}]$ for every $R^i_g\in\mathcal{R}^i_{\mathcal{G}}$, where $S^{1}=S$ and $S^{\partial}=T$.
	\end{itemize}
\end{definition}

\begin{lemma}\label{lem:defofequalmaps}
	For every p-morphism $(S,T):\mathbb{F}_1\to \mathbb{F}_2$ and $a\in( \mathbb{F}_2)^+$  $$(T^{(0)}[\exts{a}])^{\downarrow}=S^{(0)}[\ints{a}].$$
\end{lemma}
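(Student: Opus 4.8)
The plan is to prove the identity $(T^{(0)}[\exts{a}])^{\downarrow} = S^{(0)}[\ints{a}]$ for every $a \in (\mathbb{F}_2)^+$ by bootstrapping from conditions p4 and p5 of Definition~\ref{def:lehoms}, which give the corresponding inclusions for singletons $w \in W_2$ and $w \in W_1$ respectively, together with the stability conditions p2 and p3 and Lemma~\ref{lem:monochrome}.

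\textbf{Setup.} Fix $a \in (\mathbb{F}_2)^+$, so that $\exts{a} \subseteq W_2$ and $\ints{a} \subseteq U_2$ are Galois-stable with $\ints{a} = \exts{a}^{\uparrow}$ and $\exts{a} = \ints{a}^{\downarrow}$. I would first observe that, by the definition of $S^{(0)}$ and $T^{(0)}$ as intersections indexed over the argument set, one has
\[
T^{(0)}[\exts{a}] = \bigcap_{w \in \exts{a}} T^{(0)}[w] \qquad \text{and} \qquad S^{(0)}[\ints{a}] = \bigcap_{u \in \ints{a}} S^{(0)}[u],
\]
and similarly $S^{(0)}[w^{\uparrow}] = \bigcap_{u \in w^{\uparrow}} S^{(0)}[u]$ for any $w \in W_2$. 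Taking the Galois closure $(\cdot)^{\downarrow\uparrow\downarrow} = (\cdot)^{\downarrow}$-style operations through these intersections (intersections of stable sets are stable, and closing operators distribute appropriately over the relevant meets in the concept lattice) will let me pass from the singleton statements p4, p5 to the desired set-indexed statement.

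\textbf{The inclusion $(T^{(0)}[\exts{a}])^{\downarrow} \subseteq S^{(0)}[\ints{a}]$.} Here I would argue: for each $w \in \exts{a}$ we have $w^{\uparrow} \supseteq \ints{a}$ (since $\ints{a} = \exts{a}^{\uparrow}$ and $w \in \exts{a}$), hence $S^{(0)}[w^{\uparrow}] \subseteq S^{(0)}[\ints{a}]$ by antitonicity of $S^{(0)}$. By p4, $(T^{(0)}[w])^{\downarrow} \subseteq S^{(0)}[w^{\uparrow}] \subseteq S^{(0)}[\ints{a}]$. Now $T^{(0)}[\exts{a}] \subseteq T^{(0)}[w]$ for each such $w$, so $(T^{(0)}[\exts{a}])^{\downarrow} \subseteq (T^{(0)}[w])^{\downarrow}$. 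Thus $(T^{(0)}[\exts{a}])^{\downarrow} \subseteq S^{(0)}[\ints{a}]$. Actually, since $\exts{a} = \ints{a}^{\downarrow}$, one has to be a little careful: the bound $(T^{(0)}[\exts{a}])^{\downarrow} \subseteq (T^{(0)}[w])^{\downarrow}$ holds for every $w \in \exts{a}$, but to conclude I only need one such $w$ unless $\exts{a} = \emptyset$, in which case $\exts{a} = W_2$ is not forced but $T^{(0)}[\emptyset]$ is the whole space and one checks the edge case directly using that $\ints{a} = U_2$ then and $S^{(0)}[U_2] = (\text{bottom of }\mathbb{W}_1^+)$'s extent etc. So the generic case is immediate and the empty case is a separate short check.

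\textbf{The reverse inclusion.} This is the harder direction and I expect it to be the main obstacle. I would use p5, which says $T^{(0)}[(S^{(1)}[w'])^{\downarrow}] \subseteq w'^{\uparrow}$ for every $w' \in W_1$. The strategy is: it suffices (by the Galois connection $X \subseteq Y^{\downarrow} \iff Y \subseteq X^{\uparrow}$ applied with $X = T^{(0)}[\exts{a}]$ on the $\mathbb{W}_1$ side... actually $X = S^{(0)}[\ints{a}] \subseteq W_1$ and I want $S^{(0)}[\ints{a}] \subseteq (T^{(0)}[\exts{a}])^{\downarrow}$, equivalently $T^{(0)}[\exts{a}] \subseteq (S^{(0)}[\ints{a}])^{\uparrow}$) to show $T^{(0)}[\exts{a}] \subseteq (S^{(0)}[\ints{a}])^{\uparrow}$. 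Take $u' \in T^{(0)}[\exts{a}]$ and $w' \in S^{(0)}[\ints{a}]$; I must show $w' N u'$. From $w' \in S^{(0)}[\ints{a}]$ I get $\ints{a} \subseteq S^{(1)}[w']$, hence $\exts{a} = \ints{a}^{\downarrow} \supseteq (S^{(1)}[w'])^{\downarrow}$, so $T^{(0)}[\exts{a}] \subseteq T^{(0)}[(S^{(1)}[w'])^{\downarrow}] \subseteq w'^{\uparrow}$ by p5. Since $u' \in T^{(0)}[\exts{a}]$, we get $u' \in w'^{\uparrow}$, i.e.\ $w' N u'$, as desired. This completes the reverse inclusion, and combining the two gives equality.

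I would present the reverse inclusion first as the substantive step, the forward one as the routine warm-up, and relegate the empty-set edge cases to a parenthetical remark. The main thing to get right is the bookkeeping of which Galois map ($\uparrow$ on $\mathbb{W}_1$ versus $\mathbb{W}_2$) is being used at each step, since $S$ and $T$ cross between the two polarities; using Lemma~\ref{lem:monochrome} to justify $\ints{a} \subseteq S^{(1)}[w'] \iff w' \in S^{(0)}[\ints{a}]$ and the analogous equivalence for $T$ keeps this clean.
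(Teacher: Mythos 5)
Your reverse inclusion is correct and is essentially the paper's own argument: you reduce $S^{(0)}[\ints{a}]\subseteq (T^{(0)}[\exts{a}])^{\downarrow}$ to $T^{(0)}[\exts{a}]\subseteq (S^{(0)}[\ints{a}])^{\uparrow}$ via the Galois connection, and then, for a fixed $w'\in S^{(0)}[\ints{a}]$, use $\ints{a}\subseteq S^{(1)}[w']$, antitonicity and p5. The paper runs the same computation with the whole set $S^{(0)}[\ints{a}]$ in place of $w'$; your pointwise version even has the small advantage of invoking p5 exactly as it is stated in Definition \ref{def:lehoms}.

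The gap is in the forward inclusion. From $T^{(0)}[\exts{a}]\subseteq T^{(0)}[w]$ you conclude $(T^{(0)}[\exts{a}])^{\downarrow}\subseteq (T^{(0)}[w])^{\downarrow}$, but $(\cdot)^{\downarrow}$ is antitone, so the inclusion reverses: one only gets $(T^{(0)}[w])^{\downarrow}\subseteq (T^{(0)}[\exts{a}])^{\downarrow}$. What your chain actually proves is $\bigcup_{w\in\exts{a}}(T^{(0)}[w])^{\downarrow}\subseteq S^{(0)}[\ints{a}]$, which is strictly weaker than the claim: $T^{(0)}[\exts{a}]=\bigcap_{w\in\exts{a}}T^{(0)}[w]$, and the $\downarrow$ of an intersection is in general strictly larger than the union of the $\downarrow$'s, so ``one such $w$'' does not suffice, and the empty-extension remark does not repair this. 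This direction cannot be done by antitonicity from the singleton form of p4 alone; you need the stability conditions p2--p3. For instance: for each $w\in\exts{a}$, apply $(\cdot)^{\uparrow}$ to p4 and use stability of $T^{(0)}[w]$ (p3) to get $(S^{(0)}[w^{\uparrow}])^{\uparrow}\subseteq (T^{(0)}[w])^{\downarrow\uparrow}=T^{(0)}[w]$; since $\ints{a}\subseteq w^{\uparrow}$, antitonicity gives $(S^{(0)}[\ints{a}])^{\uparrow}\subseteq (S^{(0)}[w^{\uparrow}])^{\uparrow}\subseteq T^{(0)}[w]$, hence $(S^{(0)}[\ints{a}])^{\uparrow}\subseteq T^{(0)}[\exts{a}]$; finally apply $(\cdot)^{\downarrow}$ and use stability of $S^{(0)}[\ints{a}]$ (an intersection of stable sets, by p2) to conclude $(T^{(0)}[\exts{a}])^{\downarrow}\subseteq (S^{(0)}[\ints{a}])^{\uparrow\downarrow}=S^{(0)}[\ints{a}]$. (The paper declares this direction ``immediate from p4 and $\ints{a}=\exts{a}^{\uparrow}$''; in effect it uses p4 with the stable set $\exts{a}$ in place of the single $w$, which is exactly the strengthening just derived, not something your union bound delivers.)
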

\begin{proof}
	One direction follows immediately from p4 of Definition \ref{def:lehoms} and the fact that $\ints{a}=(\exts{a})^\uparrow$. Conversely let us show that $S^{(0)}[\ints{a}]\subseteq (T^{(0)}[\exts{a}])^{\downarrow}$. Since, by p3 and p4, $T^{(0)}[\exts{a}]$ and $S^{(0)}[\ints{a}]$ are stable, this is equivalent to showing that $T^{(0)}[\exts{a}]\subseteq S^{(0)}[\ints{a}]^\uparrow$. Since by p4 $$T^{(0)}[(S^{(1)}[S^{(0)}[\ints{a}]])^\downarrow]\subseteq S^{(0)}[\ints{a}]^\uparrow$$ it is enough to show that $$T^{(0)}[\exts{a}]\subseteq T^{(0)}[(S^{(1)}[S^{(0)}[\ints{a}]])^\downarrow].$$ We have:
	\begin{align*}
	&\quad \ints{a}\subseteq S^{(1)}[S^{(0)}[\ints{a}]]\tag{ Lemma \ref{lem:monochrome}}\\
	\Longrightarrow & \quad  (S^{(1)}[S^{(0)}[\ints{a}]])^\downarrow\subseteq \ints{a}^\downarrow\tag{ antitonicity of $\downarrow$}\\
	\iff & \quad  (S^{(1)}[S^{(0)}[\ints{a}]])^\downarrow\subseteq\exts{a}\tag{$\ints{a}^\downarrow=\exts{a}$}\\
	\Longrightarrow &\quad T^{(0)}[\exts{a}]\subseteq T^{(0)}[(S^{(1)}[S^{(0)}[\ints{a}]])^\downarrow]\tag{antitonicity of $T^{(0)}$}.
\end{align*}
\end{proof}
\begin{definition}
	\begin{enumerate}
		\item Let $(S,T):\mathbb{F}_1\to\mathbb{F}_2$ be a p-morphism. Then we let $$h_{(S,T)}:(\mathbb{F}_2)^+\to(\mathbb{F}_1)^+$$ be defined as $h(a):=(S^{(0)}[\ints{a}],T^{(0)}[\exts{a}])$.
		\item 	Let $h:(\mathbb{F}_1)^+\to(\mathbb{F}_2)^+$ be a complete $\mathcal{L}$-homomorphism. Then we let$$(S_h,T_h):\mathbb{F}_2\to\mathbb{F}_1$$ be defined as $$S_h(w,u)\iff w\in\exts{h(u^{\downarrow\uparrow})} \qquad T_h(u,w)\iff u\in\ints{h(w^{\uparrow\downarrow})}.$$
	\end{enumerate}

\end{definition}

\begin{prop}\label{lemma:lehoms}
	For any $\mathcal{L}$-frames  $\mathbb{F}_1$ and $\mathbb{F}_2$,
	\begin{enumerate}
		\item $h_{(S,T)}$ is a complete $\mathcal{L}$-homomorphism for every p-morphism $(S,T):\mathbb{F}_1\to\mathbb{F}_2$.
		\item $(S_h,T_h)$ is a p-morphism for every complete  $\mathcal{L}$-homomorphism $h:(\mathbb{F}_1)^+\to(\mathbb{F}_2)^+$.
	\end{enumerate}
\end{prop}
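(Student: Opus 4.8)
The plan is to establish the two items separately, each time reducing all verifications to the behaviour on the ``generators'' of the concept lattices --- the Galois closures $w^{\uparrow\downarrow}$ ($w\in W$) and co-generators $u^{\downarrow\uparrow}$ ($u\in U$) of singletons, which are respectively join-dense and meet-dense in $\mathbb{W}^+$ by construction of the complex algebra. For item 1, I would first check that $h_{(S,T)}$ is well defined: for $a\in(\mathbb{F}_2)^+$ the pair $(S^{(0)}[\ints{a}],T^{(0)}[\exts{a}])$ is a Galois pair of stable sets of $\mathbb{W}_1$, the Galois pair condition $(T^{(0)}[\exts{a}])^{\downarrow}=S^{(0)}[\ints{a}]$ being exactly Lemma \ref{lem:defofequalmaps}, and stability of each component holding because it is an intersection of the stable sets $S^{(0)}[u]$ ($u\in\ints{a}$), resp.\ $T^{(0)}[w]$ ($w\in\exts{a}$), supplied by p2 and p3. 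The same reasoning as in the proof of Lemma \ref{lem:defofequalmaps} (via Lemma \ref{lem:monochrome}; cf.\ the Remark after Lemma \ref{lem:two definitions of compatibility}) shows that $S^{(0)}$ and $T^{(0)}$ are insensitive to the Galois closure of their argument; as they also turn unions into intersections, preservation of arbitrary meets and joins --- hence of $\wedge,\vee,\top,\bot$ --- follows by a routine computation with extents (for meets, comparing $S^{(0)}[\ints{\bigwedge a_i}]$ with $\bigcap S^{(0)}[\ints{a_i}]$) and with intents (for joins, via $T^{(0)}$).

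The substantive step of item 1 is preservation of the connectives in $\mathcal{F}$ and $\mathcal{G}$. I would first record that the compatibility conditions \eqref{eq:compa1} and \eqref{eq:compa2}, via Lemma \ref{lem:two definitions of compatibility}, make every $f_{R_f}$ (resp.\ $g_{R_g}$) on a complex algebra \emph{completely} normal: completely join-preserving in its $\varepsilon=1$ coordinates and sending complete meets to complete joins in its $\varepsilon=\partial$ coordinates (resp.\ dually). Indeed the defining formula turns a union in one coordinate into an intersection of $R_f^{(0)}$'s, and the Galois closure introduced by the lattice operations is absorbed by closure-insensitivity of $R_f^{(0)}$ in that coordinate. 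Hence $f_{R^2_f}(\overline{a})$ is a join, and $g_{R^2_g}(\overline{a})$ a meet, of its values on generator-tuples lying below/above $\overline{a}$. Applying $h_{(S,T)}$, which preserves those joins and meets by the previous paragraph, and evaluating $h_{(S,T)}$ on generators explicitly --- using Lemma \ref{lem:defofequalmaps} and closure-insensitivity to identify, e.g., $S^{(0)}[w^{\uparrow}]$ with $(T^{(0)}[w])^{\downarrow}$ and $T^{(0)}[\{w\}^{\uparrow\downarrow}]$ with $T^{(0)}[w]$ --- the identities $h_{(S,T)}(f_{R^2_f}(\overline{a}))=f_{R^1_f}(\overline{h_{(S,T)}(a)})$ and $h_{(S,T)}(g_{R^2_g}(\overline{a}))=g_{R^1_g}(\overline{h_{(S,T)}(a)})$ collapse, on a generator-tuple, to exactly p6 and p7; reassembling the join/meet then uses complete normality of $f_{R^1_f}$ and $g_{R^1_g}$ together with the fact that $h_{(S,T)}$ preserves joins and meets. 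I expect the real obstacle here to be the bookkeeping of order-types and of the sort ($W$ or $U$) of each set when matching the generator case against p6 and p7.

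For item 2, I would verify p1--p7 for $(S_h,T_h)$ in turn. The identities $S_h^{(0)}[u]=\exts{h(u^{\downarrow\uparrow})}$ and $T_h^{(0)}[w]=\ints{h(w^{\uparrow\downarrow})}$ hold by definition, and --- since $h$ preserves arbitrary joins and meets and every concept is a join of generators and a meet of co-generators --- they extend to $S_h^{(0)}[\ints{c}]=\exts{h(c)}$ and $T_h^{(0)}[\exts{c}]=\ints{h(c)}$ for all $c\in(\mathbb{F}_1)^+$. These give at once the stability of $S_h^{(0)}[u]$ and $T_h^{(0)}[w]$ (first halves of p2 and p3), their closure-insensitivity (again by the Lemma \ref{lem:monochrome} argument), and p4 and p5 as short computations (for instance both sides of p4 equal $\exts{h(w^{\uparrow\downarrow})}$). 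For the second halves of p2 and p3, I would use that, $h$ preserving arbitrary meets, the preimage under $h$ of a principal up-set is again a principal up-set, so that $\{u\mid w\in\exts{h(u^{\downarrow\uparrow})}\}$ equals $\ints{x_0}$ for $x_0:=\bigwedge h^{-1}[\{b\mid w\in\exts{b}\}]$ and is therefore stable (symmetrically for $T_h^{(1)}[u]$). Finally, p6 and p7 are precisely the statements that $h$ commutes with $f_{R_f}$ and with $g_{R_g}$ read off on generator-tuples: unwinding $S_h$, $T_h$, the relational images of generators, and the defining formula of $f_{R_f}$ (resp.\ $g_{R_g}$) on generators --- again using $S_h^{(0)}[\ints{c}]=\exts{h(c)}$ and $T_h^{(0)}[\exts{c}]=\ints{h(c)}$ --- turns them into $h(f_{R^1_f}(\overline{b}))=f_{R^2_f}(\overline{h(b)})$, resp.\ its $g$-analogue, for generator-tuples $\overline{b}$, which holds since $h$ is by hypothesis an $\mathcal{L}$-homomorphism. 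As in item 1, the delicate part is only the order-type bookkeeping in p6 and p7.
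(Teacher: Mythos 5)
Your proposal is correct, and where it overlaps with the paper it follows the same route: the paper's proof of Proposition \ref{lemma:lehoms} is a two-sentence sketch of item 1 only, asserting exactly your decomposition --- p2, p3 together with Lemma \ref{lem:defofequalmaps} (hence p4, p5) give well-definedness and preservation of arbitrary meets and joins, while p6 and p7 give preservation of the $\mathcal{F}$- and $\mathcal{G}$-operations. What you add is the content the paper leaves implicit: the observation that p6 and p7 are point-level identities, so that lifting them to arbitrary tuples of concepts requires either the density-plus-complete-normality argument you give (complete normality of $f_{R_f}$ and $g_{R_g}$ being exactly what the compatibility conditions buy, cf.\ Proposition \ref{prop:F plus is L star algebra}), or, equivalently, a direct computation using closure-insensitivity of $S^{(0)}$, $T^{(0)}$, $R_f^{(0)}$, $R_g^{(0)}$ in the spirit of Lemma \ref{lem:two definitions of compatibility}; and, more significantly, a proof of item 2, for which the paper records no argument at all. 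Your treatment of item 2 is sound: the identities $S_h^{(0)}[\ints{c}]=\exts{h(c)}$ and $T_h^{(0)}[\exts{c}]=\ints{h(c)}$ (obtained from meet/join density and completeness of $h$) yield p2--p5, the principal-filter argument using preservation of arbitrary meets gives stability of $S_h^{(1)}[w]$ and $T_h^{(1)}[u]$, and unwinding p6, p7 on generator tuples does reduce them to the homomorphism identities $h(f_{R^1_f}(\overline{b}))=f_{R^2_f}(\overline{h(b)})$ and its $g$-analogue. The only caveat is cosmetic: Lemma \ref{lem:two definitions of compatibility} is stated for relations over a single polarity, so for $S$ and $T$, which relate $\mathbb{W}_1$ and $\mathbb{W}_2$, you should note (as you implicitly do) that its proof transfers verbatim to the two-polarity situation.
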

\begin{proof}
	Conditions p2, p3 and via Lemma \ref{lem:defofequalmaps} $T^{(0)}$ conditions p4 and p5 guarantee that $h_{(S,T)}$  is well defined and preserves joins and meets. Conditions p6 and p7 immediately imply that $h_{(S,T)}$ preserves $\mathcal{F}$ connectives and $\mathcal{G}$ connectives respectively.
\end{proof}

\begin{prop}\label{lemma:back and forth}
	For all $\mathcal{L}$-frames  $\mathbb{F}_1$ and $\mathbb{F}_2$,
	\begin{enumerate}
		\item $(S, T) = (S_{h_{(S,T)}}, T_{h_{(S,T)}})$  for every p-morphism $(S,T):\mathbb{F}_1\to\mathbb{F}_2$.
		\item $h = h_{(S_h,T_h)}$ for every complete  $\mathcal{L}$-homomorphism $h:(\mathbb{F}_1)^+\to(\mathbb{F}_2)^+$.
	\end{enumerate}
\end{prop}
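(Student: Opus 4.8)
The plan is to establish the two bijection statements by unwinding definitions and invoking the duality theory for complete homomorphisms of perfect lattices, exactly as in the Boolean and distributive cases \cite{dunn2005canonical}. The two items are mirror images of each other, so I would prove item 1 in detail and then observe that item 2 follows by a completely analogous argument (or by a categorical duality argument, once both directions of the correspondence are known to be functorial, but the direct route is cleaner here).

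For item 1, let $(S,T):\mathbb{F}_1\to\mathbb{F}_2$ be a p-morphism, set $h:=h_{(S,T)}$, and let $(S',T'):=(S_h,T_h)$. By definition $S'(w,u)$ holds iff $w\in\exts{h(u^{\downarrow\uparrow})}$, and since $u^{\downarrow\uparrow}$ is (the intension-side of) the element $a\in(\mathbb{F}_2)^+$ with $\ints{a}=u^{\downarrow\uparrow}$ — equivalently $\exts{a}=u^\downarrow$ — we have $\exts{h(a)}=S^{(0)}[\ints{a}]=S^{(0)}[u^{\downarrow\uparrow}]$. So $S'(w,u)\iff w\in S^{(0)}[u^{\downarrow\uparrow}]$. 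To recover $S^{(0)}[u]=S^{(0)}[u^{\downarrow\uparrow}]$ I would invoke Lemma \ref{lem:two definitions of compatibility}(i) (or the Remark following it): since $S^{(1)}[w]$ is stable for every $w\in W_1$ by p2, the map $S^{(0)}$ is "closed" in its argument, i.e. $S^{(0)}[Y]=S^{(0)}[Y^{\downarrow\uparrow}]$ for all $Y\subseteq U_2$; in particular $S^{(0)}[\{u\}]=S^{(0)}[u^{\downarrow\uparrow}]$. Hence $S'(w,u)\iff w\in S^{(0)}[u]\iff S(w,u)$, which is precisely $S'=S$. The computation for $T'=T$ is symmetric, using the intension-side description $\ints{h(b)}=T^{(0)}[\exts{b}]$ from Lemma \ref{lem:defofequalmaps}, the fact that $T^{(1)}[u]$ is stable by p3, and the dual closure identity $T^{(0)}[X]=T^{(0)}[X^{\uparrow\downarrow}]$ applied to $X=\{w\}$.

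For item 2, let $h:(\mathbb{F}_1)^+\to(\mathbb{F}_2)^+$ be a complete $\mathcal{L}$-homomorphism, put $(S,T):=(S_h,T_h)$ — which is a p-morphism by Proposition \ref{lemma:lehoms}(2) — and let $h':=h_{(S,T)}$. I need $h'=h$, i.e. $\exts{h'(a)}=\exts{h(a)}$ for every $a\in(\mathbb{F}_1)^+$, and by denseness of the concept lattice it suffices to check this on elements of the form $a=u^{\downarrow\uparrow}$ (the closed elements corresponding to features $u\in U_1$), since these meet-generate $(\mathbb{F}_1)^+$ and a complete homomorphism is determined by its action on a meet-generating set. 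For such $a$ we have $\exts{h'(a)}=S^{(0)}[\ints{a}]=S^{(0)}[u^{\downarrow\uparrow}]$, and unwinding the definition of $S=S_h$, $w\in S^{(0)}[u^{\downarrow\uparrow}]$ iff for all $u'$ with $u'\succ u^{\downarrow\uparrow}$ (i.e. $u'\in u^{\downarrow\uparrow}$) we have $S(w,u')$, i.e. $w\in\exts{h((u')^{\downarrow\uparrow})}$; one then checks this says exactly $w\in\bigcap\{\exts{h((u')^{\downarrow\uparrow})}\mid u'\in u^{\downarrow\uparrow}\}=\exts{h(\bigwedge_{u'\in u^{\downarrow\uparrow}}(u')^{\downarrow\uparrow})}=\exts{h(u^{\downarrow\uparrow})}$, using that $h$ preserves arbitrary meets and that $u^{\downarrow\uparrow}=\bigwedge\{(u')^{\downarrow\uparrow}\mid u'\in u^{\downarrow\uparrow}\}$ in $(\mathbb{F}_1)^+$.

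The main obstacle, and the step deserving the most care, is the bookkeeping around the two closure operators $\gamma_N$ and $\gamma'_N$ and the identification of an element $a\in\mathbb{W}^+$ with the pair $(\exts{a},\ints{a})$: one must be consistent about whether a feature $u$ is being treated as the singleton $\{u\}$, as the Galois-stable set $u^{\downarrow\uparrow}$, or as the lattice element with intension $u^{\downarrow\uparrow}$, and about which of the stability conditions p2/p3 (together with Lemma \ref{lem:two definitions of compatibility}) is being used to absorb a closure operator. Once the convention "$S^{(0)}$ and $T^{(0)}$ are insensitive to closing their argument, by p2--p3" is fixed, both halves reduce to the standard fact that the object-to-morphism and morphism-to-object assignments of the perfect-lattice duality are mutually inverse; no genuinely new computation is required beyond Lemma \ref{lem:defofequalmaps} and the closure identities.
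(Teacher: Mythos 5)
Your argument is correct; the paper in fact states Proposition \ref{lemma:back and forth} without proof, and your unwinding of the definitions---absorbing the closures $u^{\downarrow\uparrow}$ and $w^{\uparrow\downarrow}$ via p2--p3 together with (the heterogeneous analogue of) Lemma \ref{lem:two definitions of compatibility}, and using complete meet-preservation for item 2---is exactly the routine duality computation the authors leave implicit. Two cosmetic points: the identity $\ints{h_{(S,T)}(b)}=T^{(0)}[\exts{b}]$ is just the definition of $h_{(S,T)}$ (Lemma \ref{lem:defofequalmaps} is what guarantees this pair is a concept, and Proposition \ref{lemma:lehoms} is what makes $h_{(S_h,T_h)}$ a complete homomorphism, which your meet-generation reduction tacitly needs), and in item 2 the detour through meet-generators is unnecessary, since $S_h^{(0)}[\ints{a}]=\bigcap_{u\in\ints{a}}\exts{h(u^{\downarrow\uparrow})}=\exts{h(a)}$ holds directly for every $a$ by complete meet-preservation.
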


\begin{definition}
	For every p-morphism $(S,T):\mathbb{F}_1\to\mathbb{F}_2$,
	\begin{enumerate}
		\item $(S,T)$ is \emph{surjective}, in symbols $(S,T):\mathbb{F}_1\twoheadrightarrow\mathbb{F}_2$, if $a\neq b$ implies $S^{(0)}[\ints{a}]\neq S^{(0)}[\ints{b}]$ (or equivalently $T^{(0)}[\exts{a}]\neq T^{(0)}[\exts{b}]$), for every $a,b\in(\mathbb{F}_2)^+$.  In this case we say that $\mathbb{F}_2$ is a \emph{p-morphic image} of $\mathbb{F}_1$.  
		\item $(S,T)$ is \emph{injective}, in symbols $(S,T):\mathbb{F}_1\hookrightarrow\mathbb{F}_2$, if for every $a\in(\mathbb{F}_1)^+$ there exists $b\in(\mathbb{F}_2)^+ $ such that $S^{(0)}[\ints{b}]=\exts{a}$ (or equivalently $T^{(0)}[\exts{b}]=\ints{a}$). In this case we say that $\mathbb{F}_1$ is a \emph{generated subframe} of $\mathbb{F}_2$.
	\end{enumerate}
\end{definition}

In the following examples we consider the LE-signature $\mathcal{L}=\mathcal{L}(\mathcal{F},\mathcal{G})$ where $\mathcal{F}=\varnothing$ and $\mathcal{G}=\{\Box\}$ with $n_\Box=1$ and $\epsilon_\Box(1)=1$.

\begin{example}\label{ex:morphism1}
	Consider the $\mathcal{L}$-frames $\mathbb{F}_i=(\mathbb{P}_i,R_i)$  for $i=1,2$, where $\mathbb{P}_i=(W_i,U_i,N_i)$ and $$W_1=\{a_1,b_1\}\quad U_1=\{x_1,y_1\}\quad N_1=R_1=\{(a_1,x_1),(b_1,y_1)\}$$ and  $$W_2=\{a_2\}\quad U_2=\{x_2,y_2\}\quad N_2=R_2=\{(a_2,x_2)\}.$$
	\begin{center}
		\begin{tikzpicture}
		\filldraw[black] (0,0) circle (2 pt);
		\filldraw[black] (0,1) circle (2 pt);
		\filldraw[black] (1,1) circle (2 pt);
		\draw[ultra thick] (0, 0) -- (0, 1);
		\draw (0, -0.3) node {{\large $a_2$}};
		\draw (0, 1.3) node {{\large $x_2$}};
		\draw (1, 1.3) node {{\large $y_2$}};
		\draw (0.5, -0.8) node {{\large $\mathbb{F}_2$}};
		
		\filldraw[black] (3,0) circle (2 pt);
		\filldraw[black] (4,0) circle (2 pt);
		\filldraw[black] (3,1) circle (2 pt);
		\filldraw[black] (4,1) circle (2 pt);
		\draw[ultra thick] (3, 0) -- (3, 1);
		\draw[ultra thick] (4, 0) -- (4, 1);
		\draw (3, -0.3) node {{\large $a_1$}};
		\draw (3, 1.3) node {{\large $x_1$}};
		\draw (4, -0.3) node {{\large $b_1$}};
		\draw (4, 1.3) node {{\large $y_1$}};
		\draw[very thick,dotted] (0, 0) -- (3, 1);
		\draw[very thick,dotted] (0, 1) -- (3, 0);
		\draw[very thick,dotted] (1, 1) -- (4, 0);
		\draw (3.5, -0.8) node {{\large $\mathbb{F}_1$}};
		\end{tikzpicture}
	\end{center}
	
	Let $(S,T):\mathbb{F}_2\to\mathbb{F}_1$ be the injective p-morphism defined as $$S=\{(a_2,x_1)\}\quad\quad T=\{(x_2,a_1),(y_2,b_1)\}.$$ To see that indeed $(S,T)$ verifies e.g.\ p4 of Definition \ref{def:lehoms}, $(T^{(0)}[a_1])^{\downarrow}=\{a_2\}=S^{(0)}[a_1^{\uparrow}]$ and $(T^{(0)}[b_1])^{\downarrow}=\varnothing=S^{(0)}[b_1^{\uparrow}]$. To see that it is injective, $\{a_2\}=S^{(0)}[x_1]$ and $\varnothing=S^{(0)}[y_1]$. Therefore $\mathbb{F}_2$ is a generated subframe of $\mathbb{F}_1$. 	
\end{example}

\begin{example}\label{ex:morphism2}
	Consider the $\mathcal{L}$-frames $\mathbb{F}_i=(\mathbb{P}_i,R_i)$  for $i=1,2$, where $\mathbb{P}_i=(W_i,U_i,N_i)$ and $$W_1=\{a_1,b_1\}\quad U_1=\{x_1,y_1\}\quad N_1=\{(a_1,x_1),(b_1,y_1)\}\quad R_1=\{(a_1,y_1),(b_1,x_1)\}$$ and  $$W_2=\{a_2\}\quad U_2=\{x_2\}\quad N_2=R_2=\varnothing.$$
	\begin{center}
		\begin{tikzpicture}
		\filldraw[black] (0,0) circle (2 pt);
		\filldraw[black] (1,0) circle (2 pt);
		\filldraw[black] (0,1) circle (2 pt);
		\filldraw[black] (1,1) circle (2 pt);
		\draw[ultra thick] (0, 0) -- (0, 1);
		\draw[ultra thick] (1, 0) -- (1, 1);
		\draw[thick] (0, 0) -- (1, 1);
		\draw[thick] (1, 0) -- (0, 1);
		\draw (0, -0.3) node {{\large $a_1$}};
		\draw (0, 1.3) node {{\large $x_1$}};
		\draw (1, -0.3) node {{\large $b_1$}};
		\draw (1, 1.3) node {{\large $y_1$}};
		\draw (0.5, -0.8) node {{\large $\mathbb{F}_1$}};
		
		\filldraw[black] (3,0) circle (2 pt);
		\filldraw[black] (3,1) circle (2 pt);
		\draw (3, -0.3) node {{\large $a_2$}};
		\draw (3, 1.3) node {{\large $x_2$}};
		\draw (3, -0.8) node {{\large $\mathbb{F}_2$}};
		\end{tikzpicture}
	\end{center}
	
	It can be verified that  $(S,T):\mathbb{F}_1\to\mathbb{F}_2$  defined as $$S=T=\varnothing$$ is a surjective p-morphism. Therefore $\mathbb{F}_2$ is a p-morphic image of $\mathbb{F}_1$. 	
\end{example}

\begin{example}\label{ex:nonmorphism}
		For the same $\mathcal{L}$-frames as Example \ref{ex:morphism2} the pair of relations $(S,T)$ defined as $$S=U_1\times W_2\qquad T=W_1\times U_2$$ is \textbf{not} a p-morphism.
\begin{center}
	\begin{tikzpicture}
	\filldraw[black] (0,0) circle (2 pt);
	\filldraw[black] (1,0) circle (2 pt);
	\filldraw[black] (0,1) circle (2 pt);
	\filldraw[black] (1,1) circle (2 pt);
	\draw[ultra thick] (0, 0) -- (0, 1);
	\draw[ultra thick] (1, 0) -- (1, 1);
	\draw[thick] (0, 0) -- (1, 1);
	\draw[thick] (1, 0) -- (0, 1);
	\draw (0, -0.3) node {{\large $a_1$}};
	\draw (0, 1.3) node {{\large $x_1$}};
	\draw (1, -0.3) node {{\large $b_1$}};
	\draw (1, 1.3) node {{\large $y_1$}};
	\draw (0.5, -0.8) node {{\large $\mathbb{F}_1$}};
	
	\filldraw[black] (3,0) circle (2 pt);
	\filldraw[black] (3,1) circle (2 pt);
	\draw (3, -0.3) node {{\large $a_2$}};
	\draw (3, 1.3) node {{\large $x_2$}};
	\draw[very thick,dotted] (0, 0) -- (3, 1);
	\draw[very thick,dotted] (1, 0) -- (3, 1);
	\draw[very thick,dotted] (0, 1) -- (3, 0);
	\draw[very thick,dotted] (1, 1) -- (3, 0);
	\draw (3, -0.8) node {{\large $\mathbb{F}_2$}};
	\end{tikzpicture}
\end{center}
Indeed, $(T^{(0)}[a_2])^\downarrow=\varnothing\neq\{a_1,b_1\}=S^{(0)}[(a_2)^\uparrow]$ violating the conclusion of Lemma \ref{lem:defofequalmaps}.
\end{example}

\subsection{Filter-ideal frame}
The following definition is the constructive counterpart of the ultrafilter frame (cf.\ \cite[Definition 5.40]{blackburn2002modal}).
\begin{definition}\label{def:f-i-f}
	The \emph{filter-ideal frame} of an $\mathcal{L}$-algebra $\mathbb{A}$ is $\mathbb{F}^\star_{\mathbb{A}}=(\mathfrak{F}_\mathbb{A},\mathfrak{I}_\mathbb{A},N^\star,\mathcal{R}^\star_{\mathcal{F}},\mathcal{R}^\star_{\mathcal{G}})$ defined as follows:
	\begin{enumerate}
		\item $\mathfrak{F}_\mathbb{A}=\{F\subseteq\mathbb{A}\mid F\text{ is a filter}\}$;
		\item $\mathfrak{I}_\mathbb{A}=\{I\subseteq\mathbb{A}\mid I\text{ is an ideal}\}$;
		\item $FN^\star I$ if and only if $F\cap I\neq\varnothing$;
		\item for any $f\in\mathcal{F}$ and any $\overline{F}\in\overline{\mathfrak{F}}^{\epsilon_f}$, $R^\star_f(I,\overline{F})$ if and only $f(\overline{a})\in I$ for some $\overline{a}\in\overline{F}$;
		\item for any $g\in\mathcal{G}$  and any $\overline{I}\in\overline{\mathfrak{I}}^{\epsilon_g}$, $R^\star_g(F,\overline{I})$ if and only if $g(\overline{a})\in F$ for some $\overline{a}\in\overline{I}$. 	
\end{enumerate}
\end{definition}
In order for the definition above to yield an $\mathcal{L}$-frame, we need to verify that the relations $R^\star_f$ and $R^\star_g$ satisfy \eqref{eq:compa1} and \eqref{eq:compa2}. The next lemma verifies this. To simplify the computations we let, for every $\overline{F}\in\overline{\mathfrak{F}}^{\epsilon_f}$ and $\overline{I}\in\overline{\mathfrak{I}}^{\epsilon_g}$, \begin{equation}\label{eq:notation1}g(\overline{I}):=\{g(\overline{a})\in\mathbb{A}\mid \overline{a}\in\overline{I}\}\quad  g^{(i)}(F,\overline{I}^i):=\{b\in\mathbb{A}\mid g(\overline{a}^i_b)\in F\text{ for some }\overline{a}^i\in\overline{I}^i\}\end{equation}
\begin{equation}\label{eq:notation2} f(\overline{F}):=\{f(\overline{a})\in\mathbb{A}\mid \overline{a}\in\overline{F}\}\quad  f^{(i)}(I,\overline{F}^i):=\{b\in\mathbb{A}\mid f(\overline{a}^i_b)\in I\text{ for some }\overline{a}^i\in\overline{F}^i\}\end{equation} 
 
Thanks to this notation, for any $f\in\mathcal{F}$ and $g\in\mathcal{G}$, we can write:
\begin{equation}\label{eq:simplifynotation1}
(R^\star_f)^{(0)}[\overline{F}]=\{I\in\mathfrak{I}\mid f(\overline{F})\cap I\neq\varnothing\}\quad (R^\star_f)^{(i)}[I,\overline{F}^i]=\{H\in\mathfrak{F}^{\epsilon_f(i)}\mid f^{(i)}(I,\overline{F}^i)\cap H\neq\varnothing\} 
\end{equation}
\begin{equation}\label{eq:simplifynotation2}
(R^\star_g)^{(0)}[\overline{I}]=\{F\in\mathfrak{F}\mid g(\overline{I})\cap F\neq\varnothing\}\quad (R^\star)_g^{(i)}[F,\overline{I}^i]=\{H\in\mathfrak{I}^{\epsilon_g(i)}\mid g^{(i)}(F,\overline{I}^i)\cap H\neq\varnothing\}
\end{equation}
For any LE-algebra $\mathbb{A}$ and any $X\subseteq\mathbb{A}$, let $\lfloor X\rfloor$ and $\lceil X\rceil$ respectively denote the filter and ideal generated by $X$. In case $X=\{a\}$ we write $\lfloor a\rfloor$ and $\lceil a\rceil$ for  principal filters and ideals.
\begin{lem}
	\label{lemma:prelimfi}
	For $\mathbb{F}_{\mathbb{A}}$ as above, and any $F\in \mathfrak{F}$, $I\in \mathfrak{I}$:
	\begin{enumerate}
		\item $((R^\star_f)^{(0)}[\overline{F}])^{\downarrow}=\{G\in \mathfrak{F}_\mathbb{A}\mid f(\overline{F})\subseteq G\}$;
		\item $((R^\star_g)^{(0)}[\overline{I}])^{\uparrow}=\{J\in \mathfrak{I}_\mathbb{A}\mid g(\overline{I})\subseteq J\}$;
		\item If $\epsilon_f(i)=1$ then $((R^\star_f)^{(i)}[I_0,\overline{F}^{i}])^{\uparrow}=\{J\in \mathfrak{I}_\mathbb{A}\mid f^{(i)}(I_0,\overline{F}^{i})\subseteq J\}$;
		\item If $\epsilon_f(i)=\partial$ then $((R^\star_f)^{(i)}[I_0,\overline{F}^{i}])^{\downarrow}=\{G\in \mathfrak{F}_\mathbb{A}\mid f^{(i)}(I_0,\overline{F}^{i})\subseteq G\}$;
		\item If $\epsilon_g(i)=1$ then $((R^\star_g)^{(i)}[F_0,\overline{I}^{i}])^{\downarrow}=\{G\in \mathfrak{F}_\mathbb{A}\mid g^{(i)}(F_0,\overline{I}^{i})\subseteq G\}$;
		\item If $\epsilon_g(i)=1\partial$ then $((R^\star_g)^{(i)}[F_0,\overline{I}^{i}])^{\uparrow}=\{J\in \mathfrak{I}_\mathbb{A}\mid g^{(i)}(F_0,\overline{I}^{i})\subseteq J\}$;
		\end{enumerate}
\end{lem}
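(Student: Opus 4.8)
The plan is to reduce all six items to a single pair of elementary observations about the Galois connection $(\cdot)^{\uparrow}, (\cdot)^{\downarrow}$ induced by $N^\star$ between $\mathcal{P}(\mathfrak{F}_\mathbb{A})$ and $\mathcal{P}(\mathfrak{I}_\mathbb{A})$, and then to read each item off the simplified descriptions in \eqref{eq:simplifynotation1} and \eqref{eq:simplifynotation2}. Recall that for $\mathcal{X}\subseteq\mathfrak{F}_\mathbb{A}$ one has $\mathcal{X}^{\uparrow}=\{I\in\mathfrak{I}_\mathbb{A}\mid F\cap I\neq\varnothing\text{ for every }F\in\mathcal{X}\}$, and dually for $\mathcal{Y}\subseteq\mathfrak{I}_\mathbb{A}$ one has $\mathcal{Y}^{\downarrow}=\{F\in\mathfrak{F}_\mathbb{A}\mid F\cap I\neq\varnothing\text{ for every }I\in\mathcal{Y}\}$.

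The first step is to establish, for an arbitrary subset $X\subseteq\mathbb{A}$, the identities
\begin{align*}
\{I\in\mathfrak{I}_\mathbb{A}\mid X\cap I\neq\varnothing\}^{\downarrow} &= \{G\in\mathfrak{F}_\mathbb{A}\mid X\subseteq G\},\\
\{F\in\mathfrak{F}_\mathbb{A}\mid X\cap F\neq\varnothing\}^{\uparrow} &= \{J\in\mathfrak{I}_\mathbb{A}\mid X\subseteq J\}.
\end{align*}
In the first, the inclusion $\supseteq$ is immediate: if $X\subseteq G$ and an ideal $I$ meets $X$, then any $a\in X\cap I$ also lies in $G$, so $G\cap I\neq\varnothing$. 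For $\subseteq$, let $G$ belong to the left-hand side and $a\in X$; the principal ideal $\lceil a\rceil$ meets $X$ (it contains $a$), hence $G\cap\lceil a\rceil\neq\varnothing$, so some $b\in G$ satisfies $b\leq a$, and then $a\in G$ because $G$ is upward closed. The second identity follows symmetrically, using principal filters $\lfloor a\rfloor$ and the downward closure of ideals. Note that neither argument requires $X$ to be a filter or an ideal, so the identities are available for whatever subsets of $\mathbb{A}$ occur below.

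The remaining step is bookkeeping. Item~1 is the $\downarrow$-identity with $X=f(\overline{F})$, since $(R^\star_f)^{(0)}[\overline{F}]=\{I\in\mathfrak{I}\mid f(\overline{F})\cap I\neq\varnothing\}$ by \eqref{eq:simplifynotation1}; item~2 is the $\uparrow$-identity with $X=g(\overline{I})$, by \eqref{eq:simplifynotation2}. For items~3--6 one observes that $(R^\star_f)^{(i)}[I_0,\overline{F}^{i}]$ and $(R^\star_g)^{(i)}[F_0,\overline{I}^{i}]$ are again of the form $\{H\mid X\cap H\neq\varnothing\}$, with $X$ equal to $f^{(i)}(I_0,\overline{F}^{i})$ respectively $g^{(i)}(F_0,\overline{I}^{i})$, and where $H$ ranges over $\mathfrak{F}_\mathbb{A}$ exactly when $\epsilon_f(i)=1$, respectively $\epsilon_g(i)=\partial$, and over $\mathfrak{I}_\mathbb{A}$ otherwise; in the former case the $\uparrow$-identity applies and in the latter the $\downarrow$-identity, and in each of the four cases this delivers precisely the asserted equality. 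I do not expect any genuine obstacle here; the only point that demands attention is this matching of cases — verifying in each item that the operation ($\uparrow$ versus $\downarrow$) and the sort of the result ($\mathfrak{I}_\mathbb{A}$ versus $\mathfrak{F}_\mathbb{A}$) agree with the value of $\epsilon_f(i)$ or $\epsilon_g(i)$ dictated by the definition of $\mathbb{F}^\star_\mathbb{A}$ as an $\mathcal{L}$-frame.
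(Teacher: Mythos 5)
Your proof is correct and follows essentially the same route as the paper: the paper proves item~1 by the same principal-ideal argument (if $a\in f(\overline{F})\setminus G$, then $\lceil a\rceil$ lies in $(R^\star_f)^{(0)}[\overline{F}]$ but misses $G$) and dispatches items~2--6 as ``analogous''. You have merely packaged that argument as a single pair of Galois-connection identities for arbitrary $X\subseteq\mathbb{A}$ and made the case-matching with $\epsilon_f(i)$, $\epsilon_g(i)$ explicit, which is a fine but not substantively different presentation.
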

\begin{proof}
1. Clearly $\{G\in \mathfrak{F}_\mathbb{A}\mid f(\overline{F})\subseteq G\}\subseteq ((R^\star_f)^{(0)}[\overline{F}])^{\downarrow}$. For the converse, assume that $F\notin \{G\in \mathfrak{F}_\mathbb{A}\mid f(\overline{F})\subseteq G\}$, i.e.\ there is some $a\in f(\overline{F})$ such that $a\notin F$. By \eqref{eq:simplifynotation1}, $\lceil a\rceil\in(R^\star_f)^{(0)}[\overline{F}]$, and $\lceil a\rceil\cap F=\varnothing$, therefore $F\notin  ((R^\star_f)^{(0)}[\overline{F}])^{\downarrow}$. The remaining statements are proved analogously.
\end{proof}

\begin{lemma}\label{lem:filtersandideals}
	\begin{enumerate}
		\item If $\epsilon_g(i)=1$ then $g^{(i)}(F,\overline{I}^i)$ is a filter;
		\item If $\epsilon_g(i)=\partial$ then $g^{(i)}(F,\overline{I}^i)$ is an ideal;
		\item If $\epsilon_f(i)=1$ then $f^{(i)}(I,\overline{F}^i)$ is an ideal;
		\item If $\epsilon_f(i)=\partial$ then $f^{(i)}(I,\overline{F}^i)$ is a filter.
	\end{enumerate}
\end{lemma}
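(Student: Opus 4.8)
The plan is to unwind the definitions in \eqref{eq:notation1} and \eqref{eq:notation2} and verify the closure conditions directly, using normality of $f$ and $g$ together with the directedness of filters and ideals. I will carry out item~1 in detail and indicate how items~2--4 follow by the same bookkeeping, with the roles of $\wedge$ and $\vee$, of up-sets and down-sets, and of filters and ideals interchanged.

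For item~1, recall that $\epsilon_g(i)=1$ means $g$ preserves finite meets --- hence is monotone --- in its $i$-th coordinate, so that $g(\overline{a}^i_{b\wedge c})=g(\overline{a}^i_b)\wedge g(\overline{a}^i_c)$ and $g(\overline{a}^i_\top)=\top$; and in every coordinate $j\neq i$ the map $g$ is monotone when $\epsilon_g(j)=1$ (in which case the $j$-th component of $\overline{I}^i$ is an ideal) and antitone when $\epsilon_g(j)=\partial$ (in which case that component is a filter). \emph{Nonemptiness:} a tuple $\overline{a}^i\in\overline{I}^i$ exists, since each component is a nonempty filter or ideal (and the empty tuple serves when $n_g=1$); then $g(\overline{a}^i_\top)=\top\in F$, so $\top\in g^{(i)}(F,\overline{I}^i)$. \emph{Upward closure:} if $b\leq c$ and $g(\overline{a}^i_b)\in F$ for some $\overline{a}^i\in\overline{I}^i$, then $g(\overline{a}^i_b)\leq g(\overline{a}^i_c)$ by monotonicity in the $i$-th coordinate, so $g(\overline{a}^i_c)\in F$ since $F$ is an up-set.

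\emph{Closure under binary meets} is the crux, and this is where directedness enters. Given $b,c\in g^{(i)}(F,\overline{I}^i)$ with witnesses $\overline{a}^i,\overline{a'}^i\in\overline{I}^i$ (so $g(\overline{a}^i_b),g(\overline{a'}^i_c)\in F$), build a common witness $\overline{d}^i$ by setting, for $j\neq i$, $d_j:=a_j\vee a'_j$ when $\epsilon_g(j)=1$ and $d_j:=a_j\wedge a'_j$ when $\epsilon_g(j)=\partial$. Then $\overline{d}^i\in\overline{I}^i$, since ideals are closed under $\vee$ and filters under $\wedge$; and the (anti)monotonicity of $g$ in each coordinate $j\neq i$ yields $g(\overline{a}^i_b)\leq g(\overline{d}^i_b)$ and $g(\overline{a'}^i_c)\leq g(\overline{d}^i_c)$, so $g(\overline{d}^i_b),g(\overline{d}^i_c)\in F$. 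Since $F$ is closed under $\wedge$ and $g$ preserves meets in its $i$-th coordinate,
\[ g(\overline{d}^i_{b\wedge c})=g(\overline{d}^i_b)\wedge g(\overline{d}^i_c)\in F, \]
whence $b\wedge c\in g^{(i)}(F,\overline{I}^i)$, which completes item~1.

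Item~2 ($\epsilon_g(i)=\partial$) is proved by exactly the same steps after swapping, in the $i$-th coordinate only, meets for joins and $\top$ for $\bot$: there $g$ is antitone in coordinate $i$ with $g(\overline{a}^i_\bot)=\top$ and $g(\overline{a}^i_{b\vee c})=g(\overline{a}^i_b)\wedge g(\overline{a}^i_c)$, yielding a down-set closed under $\vee$, i.e.\ an ideal, while the common-witness construction over the coordinates $j\neq i$ is unchanged. Items~3 and~4 are the order-dual statements for $\mathcal{F}$: since $I$ is an ideal and the $\mathcal{F}$-operations are join-preserving (resp.\ meet-reversing) in their order-type-$1$ (resp.\ order-type-$\partial$) coordinates, dually to the meet-preserving / join-reversing behaviour of $\mathcal{G}$-operations, one reruns the computation with up-sets/down-sets, $\wedge/\vee$ and filters/ideals interchanged; e.g.\ for item~3 the common witness uses $d_j:=a_j\wedge a'_j$ when $\epsilon_f(j)=1$ and $d_j:=a_j\vee a'_j$ when $\epsilon_f(j)=\partial$, chosen so that the relevant $f$-values \emph{decrease} and hence remain in the ideal $I$. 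I expect the only genuine obstacle to be exactly this bookkeeping: tracking which components of $\overline{I}^i$ (resp.\ $\overline{F}^i$) are filters and which are ideals, and the direction of monotonicity of $g$ (resp.\ $f$) in each, so that directedness is applied to push the $g$- (resp.\ $f$-) values onto the correct side of $F$ (resp.\ $I$); once these order-type conventions are fixed, every individual step is routine.
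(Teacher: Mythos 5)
Your proof is correct and takes essentially the same route as the paper's: upward (resp.\ downward) closure via (anti)tonicity in the $i$-th coordinate, and closure under meets (resp.\ joins) by merging the two witnesses coordinatewise and then applying meet-preservation (resp.\ join-preservation/reversal) in coordinate $i$ --- your tuple $\overline{d}^{\,i}$, with joins in the ideal coordinates and meets in the filter coordinates, is exactly what the paper compresses into the notation $\overline{a\wedge^{\epsilon_g}b}$. Your additional remarks (nonemptiness, and the explicit bookkeeping of which coordinates of $\overline{I}^{\,i}$, $\overline{F}^{\,i}$ are filters versus ideals and of the tonicity of $g$, $f$ in each) merely spell out details the paper leaves implicit.
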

\begin{proof}
	1. Assume that $c,d\in g^{(i)}(F,\overline{I}^i)$. That is, there exist $\overline{a}^{i},\overline{b}^{i}\in\overline{I}^i$ such that $g(\overline{a}^{i}_{c})\in F$ and $g(\overline{b}^{i}_{d})\in F$. Since $g$ is meet preserving and join reversing and $F$ is a filter, $g(\overline{(a\land^{\epsilon_g}b)}^{i}_{c\land d})\in F$. Since $\overline{a\land^{\epsilon_g}b}^{i}\in\overline{I}^i$ it follows that $c\land d\in g^{(i)}(F,\overline{I}^i)$. Now assume that $b\in g^{(i)}(F,\overline{I}^i)$, i.e.\ there exists $\overline{a}^{i}\in\overline{I}^i$ such that $g(\overline{a}^{i}_{b})\in F$, and let $b\leq c$. Since $g$ is monotone in the $i$-th coordinate and $F$ is a filter, then $g(\overline{a}^{i}_{c})\in F$. Since $\overline{a}^{i}\in\overline{I}^i$, it follows that $c\in g^{(i)}(F,\overline{I}^i)$.
	The proof of the remaining items are order dual. 
\end{proof}

\begin{prop}
If $\mathbb{A}$ is an LE-algebra, then $\mathbb{F}^\star_{\mathbb{A}}$ is an LE-frame.
\end{prop}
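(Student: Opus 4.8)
The plan is to deduce the proposition from Lemmas \ref{lemma:prelimfi} and \ref{lem:filtersandideals}, which already carry out the substantive computation, together with two one-line observations about the polarity $\mathbb{W}^\star=(\mathfrak{F}_\mathbb{A},\mathfrak{I}_\mathbb{A},N^\star)$. That $\mathbb{W}^\star$ is a polarity and that $R^\star_f\subseteq\mathfrak{I}_\mathbb{A}\times\overline{\mathfrak{F}}^{\epsilon_f}$ and $R^\star_g\subseteq\mathfrak{F}_\mathbb{A}\times\overline{\mathfrak{I}}^{\epsilon_g}$ have the domains required by Definition \ref{def: LE frame} is immediate from Definition \ref{def:f-i-f}, so the entire content of the proposition is the Galois-stability in $\mathbb{W}^\star$ of the sets listed in \eqref{eq:compa1} and \eqref{eq:compa2}. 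The two observations I would record first are: (a) for every filter $H$ of $\mathbb{A}$, $(\{G\in\mathfrak{F}_\mathbb{A}\mid H\subseteq G\})^{\uparrow}=\{I\in\mathfrak{I}_\mathbb{A}\mid H\cap I\neq\varnothing\}$; and (b) dually, for every ideal $K$ of $\mathbb{A}$, $(\{J\in\mathfrak{I}_\mathbb{A}\mid K\subseteq J\})^{\downarrow}=\{F\in\mathfrak{F}_\mathbb{A}\mid F\cap K\neq\varnothing\}$. Each is proved in a line, the nontrivial inclusion being witnessed by instantiating the universally quantified filter at $H$ itself (resp.\ the ideal at $K$ itself), which is legitimate precisely because $H\in\mathfrak{F}_\mathbb{A}$ (resp.\ $K\in\mathfrak{I}_\mathbb{A}$).

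I would then check stability for the $R^\star_f$'s and the $R^\star_g$'s in turn. For $(R^\star_f)^{(0)}[\overline{F}]$: Lemma \ref{lemma:prelimfi}(1) gives its $\downarrow$-image as $\{G\in\mathfrak{F}_\mathbb{A}\mid f(\overline{F})\subseteq G\}=\{G\in\mathfrak{F}_\mathbb{A}\mid\lfloor f(\overline{F})\rfloor\subseteq G\}$, so by (a) with $H:=\lfloor f(\overline{F})\rfloor$ its Galois closure equals $\{I\in\mathfrak{I}_\mathbb{A}\mid\lfloor f(\overline{F})\rfloor\cap I\neq\varnothing\}$; it then remains to identify this with $(R^\star_f)^{(0)}[\overline{F}]=\{I\in\mathfrak{I}_\mathbb{A}\mid f(\overline{F})\cap I\neq\varnothing\}$ of \eqref{eq:simplifynotation1}, and here normality of $\mathbb{A}$ comes in: given $\overline{a}^1,\dots,\overline{a}^m\in\overline{F}$, the tuple formed by coordinatewise meets in the coordinates $j$ with $\epsilon_f(j)=1$ and by coordinatewise joins in those with $\epsilon_f(j)=\partial$ still lies in $\overline{F}$ (each $F_j$ being a filter, resp.\ an ideal) and is sent by $f$ below $\bigwedge_{k}f(\overline{a}^k)$ by the monotonicity/antitonicity of $f$, so $\lfloor f(\overline{F})\rfloor$ is merely the up-closure of $f(\overline{F})$ in $\mathbb{A}$ and, ideals being down-closed, it meets a given $I$ iff $f(\overline{F})$ does. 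For $(R^\star_f)^{(i)}[I_0,\overline{F}^{i}]$ with $1\leq i\leq n_f$ I would split on $\epsilon_f(i)$: the point is that Lemma \ref{lem:filtersandideals}(3)--(4) already tells us $f^{(i)}(I_0,\overline{F}^{i})$ is an ideal when $\epsilon_f(i)=1$ and a filter when $\epsilon_f(i)=\partial$, so no generated-ideal/filter step is needed here, and in the first case Lemma \ref{lemma:prelimfi}(3) followed by (b) returns the set of \eqref{eq:simplifynotation1}, in the second Lemma \ref{lemma:prelimfi}(4) followed by (a). This verifies \eqref{eq:compa1}; the verification of \eqref{eq:compa2} is entirely order-dual, using Lemma \ref{lemma:prelimfi}(2),(5),(6), Lemma \ref{lem:filtersandideals}(1)--(2), and the dual of the normality argument above, which passes from $g(\overline{I})$ to $\lceil g(\overline{I})\rceil$. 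By Definition \ref{def: LE frame}, $\mathbb{F}^\star_{\mathbb{A}}$ is then an $\mathcal{L}$-frame.

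I do not expect a genuine obstacle: once Lemmas \ref{lemma:prelimfi} and \ref{lem:filtersandideals} are in hand, the argument is a finite case analysis over the four families of sets in \eqref{eq:compa1}--\eqref{eq:compa2} and over the order-types $\epsilon_f$, $\epsilon_g$. The only step that is more than polarity bookkeeping --- and the only place the present argument appeals to normality of $\mathbb{A}$ beyond what the two lemmas have packaged --- is the passage from $f(\overline{F})$ to $\lfloor f(\overline{F})\rfloor$ (dually, from $g(\overline{I})$ to $\lceil g(\overline{I})\rceil$) in the $(\cdot)^{(0)}$ cases, so I would make a point of spelling that small argument out rather than leaving it implicit.
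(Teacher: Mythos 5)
Your proposal is correct and follows essentially the same route as the paper's proof: the substantive steps coincide, namely the tonicity argument showing that a filter (resp.\ ideal) meets $\lfloor f(\overline{F})\rfloor$ (resp.\ $\lceil g(\overline{I})\rceil$) iff it meets $f(\overline{F})$ (resp.\ $g(\overline{I})$) --- which is exactly the paper's step $(\ast)$ --- and the appeal to Lemma \ref{lem:filtersandideals} for the $(\cdot)^{(i)}$ sets. The only cosmetic difference is that for those latter sets the paper simply notes that $(R^\star_g)^{(i)}[F,\overline{I}^i]$ is the $N^\star$-image of the single point $g^{(i)}(F,\overline{I}^i)$, hence stable, whereas you reach the same conclusion via Lemma \ref{lemma:prelimfi}(3)--(6) together with your observations (a) and (b), which the paper uses only implicitly.
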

\begin{proof}
	To show that $(R_g^\star)^{(0)}[\overline{I}]$ is stable, i.e.\ $((R_g^\star)^{(0)}[\overline{I}])^{\uparrow\downarrow}\subseteq (R_g^\star)^{(0)}[\overline{I}]$. We have:
\begin{align*}
			((R^\star_g)^{(0)}[\overline{I}])^{\uparrow\downarrow} =&\{J\in \mathfrak{I}_\mathbb{A}\mid g(\overline{I})\subseteq J\}^\downarrow\tag{ Lemma \ref{lemma:prelimfi}.2}\\
			=&\{F\in\mathfrak{F}_{\mathbb{A}}\mid \lceil g(\overline{I})\rceil\cap F\neq\varnothing\}\tag{Definition \ref{def:f-i-f}.3}\\
			\subseteq&\{F\in\mathfrak{F}_{\mathbb{A}}\mid  g(\overline{I})\cap F\neq\varnothing\}\tag{$\ast$}\\
			=&(R^\star_g)^{(0)}[\overline{I}]\tag{\ref{eq:notation1}}\\
\end{align*}

	Let us show the inclusion marked with ($\ast$). Let $F\in\mathfrak{F}_{\mathbb{A}}$ s.t.~$\lceil g(\overline{I})\rceil\cap F\neq\varnothing$. To show that $g(\overline{I})\cap F\neq\varnothing$ it is enough to show that for any $a\in\lceil g(\overline{I})\rceil$ there exists some $b\in  g(\overline{I})$ such that $a\leq b$. Indeed, it is enough to show this for $a=\bigvee_{j\leq k}g(\overline{a_j})$, where $\overline{a_j}\in\overline{I}$ for all $j\leq k$. Notice that $\overline{\bigvee^{\epsilon_g}_{j\leq k}a_j}\in\overline{I}$. Hence $g(\overline{\bigvee^{\epsilon_g}_{j\leq k}a_j})\in g(\overline{I})$. By the tonicity of $g$, we have $a=\bigvee_{j\leq k}g(\overline{a_j})\leq g(\overline{\bigvee^{\epsilon_g}_{j\leq k}a_j})$.  
	
	As for showing that $(R^\star_g)^{(i)}[F,\overline{I}^i]$ is stable, assume that $\epsilon_g(i)=1$. By \eqref{eq:simplifynotation2} $$(R^\star)_g^{(i)}[F,\overline{I}^i]=\{J\in\mathfrak{I}\mid g^{(i)}(F,\overline{I}^i)\cap J\neq\varnothing\}.$$ By Lemma \ref{lem:filtersandideals}.1 $g^{(i)}(F,\overline{I}^i)$ is a filter, therefore $(R^\star_g)^{(i)}[F,\overline{I}^i]=g^{(i)}(F,\overline{I}^i)^{\downarrow}$, which shows that $(R^\star_g)^{(i)}[F,\overline{I}^i]$ is stable. The remaining cases are shown similarly.
\end{proof}

\begin{lem}[cf.\ \cite{gehrke2001bounded} Proposition 2.6]
	$(\mathbb{F}^\star_{\mathbb{A}})^+=\mathbb{A}^\delta$.
\end{lem}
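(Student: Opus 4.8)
The plan is to split the verification into a \emph{lattice-reduct part} and an \emph{operations part}, and then invoke the uniqueness of canonical extensions. For the lattice-reduct part, observe that the underlying polarity $(\mathfrak{F}_{\mathbb{A}},\mathfrak{I}_{\mathbb{A}},N^\star)$ of $\mathbb{F}^\star_{\mathbb{A}}$ is precisely the filter-ideal polarity used in the standard realization of the canonical extension of a bounded lattice (cf.\ \cite{gehrke2001bounded}, Proposition 2.6): an element of its concept lattice is a pair consisting of a Galois-stable set of filters and a Galois-stable set of ideals; the assignment $a\mapsto(\{F\in\mathfrak{F}_{\mathbb{A}}\mid a\in F\},\{I\in\mathfrak{I}_{\mathbb{A}}\mid a\in I\})$ is a lattice embedding of the lattice reduct $L$ of $\mathbb{A}$ into $(\mathbb{F}^\star_{\mathbb{A}})^+$; and the resulting completion satisfies denseness and compactness (Definition \ref{def:can:ext}) --- for compactness one uses that filters are closed under finite meets and ideals under finite joins, which is exactly what turns the finite-cover condition into a statement about common elements of a filter and an ideal, i.e.\ about $N^\star$. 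Hence $(\mathbb{F}^\star_{\mathbb{A}})^+$, regarded as a bounded lattice with $L$ embedded in it, \emph{is} $L^\delta$; in particular its closed elements are exactly the concepts determined by a single filter and its open elements those determined by a single ideal.

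For the operations part, the goal is to show that $f_{R^\star_f}$ and $g_{R^\star_g}$ (Definition \ref{def:complexalg}) coincide with the $\sigma$-extension of $f^{\mathbb{A}}$ and the $\pi$-extension of $g^{\mathbb{A}}$ respectively. By Proposition \ref{prop:F plus is L star algebra} the algebra $(\mathbb{F}^\star_{\mathbb{A}})^+$ is a complete $\mathcal{L}$-algebra, so $f_{R^\star_f}$ is completely join-preserving in each coordinate with $\epsilon_f(i)=1$ and completely meet-reversing in each coordinate with $\epsilon_f(i)=\partial$ (and dually for $g_{R^\star_g}$); since $L^\delta$ is generated under these operations by its closed and open elements, it suffices to compute $f_{R^\star_f}$ and $g_{R^\star_g}$ on tuples of closed elements in the coordinates of positive order type and open elements in the coordinates of negative order type. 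Such a tuple is represented by a tuple $\overline{F}\in\overline{\mathfrak{F}}^{\epsilon_f}$ of filters and ideals, and since $(R^\star_f)^{(0)}[-]$ sends a product of Galois-stable sets to the intersection of the sets obtained from their members --- so that nothing is lost in passing from the stable-set arguments of $f_{R^\star_f}$ to the single-filter/ideal arguments on which $R^\star_f$ is defined --- Lemma \ref{lemma:prelimfi}.1 gives that the extent of $f_{R^\star_f}$ on this tuple is $\{G\in\mathfrak{F}_{\mathbb{A}}\mid f^{\mathbb{A}}(\overline{F})\subseteq G\}$, i.e.\ the closed element $\bigwedge\{f^{\mathbb{A}}(\overline{b})\mid\overline{b}\in\overline{F}\}$ of $\mathbb{A}^\delta$. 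Taking $\overline{F}$ to consist of the largest filters and ideals representing the given closed/open arguments, this is precisely the value of $(f^{\mathbb{A}})^\sigma$ on that tuple; the computation for $g_{R^\star_g}$ is order-dual, using Lemma \ref{lemma:prelimfi}.2, Lemma \ref{lem:filtersandideals}, and the definition of the $\pi$-extension. Combining the two parts with the uniqueness of the canonical extension (up to isomorphism fixing $\mathbb{A}$) yields $(\mathbb{F}^\star_{\mathbb{A}})^+=\mathbb{A}^\delta$.

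The step I expect to be the main obstacle is the order-type bookkeeping in the operations part: one must verify, coordinate by coordinate and uniformly over $\mathcal{F}$ and $\mathcal{G}$, that in the $\epsilon_f(i)=\partial$ (resp.\ $\epsilon_g(i)=\partial$) positions the roles of $\mathfrak{F}_{\mathbb{A}}$ and $\mathfrak{I}_{\mathbb{A}}$, of closed and open elements, and of meets and joins are interchanged in exactly the manner required by the definition of the $\sigma$- and $\pi$-extensions over $\mathbb{A}^{\epsilon_f}$ and $\mathbb{A}^{\epsilon_g}$. Subsidiary care is needed to justify the reduction from set-valued arguments to single-filter/ideal arguments, and to make sure that completeness of $(\mathbb{F}^\star_{\mathbb{A}})^+$ as an $\mathcal{L}$-algebra really does supply the join/meet-density that licenses restricting attention to closed and open arguments; once the single-coordinate, single-order-type case is pinned down, the remaining arguments are routine.
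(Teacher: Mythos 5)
Your proposal is correct, and it reconstructs precisely the argument that the paper delegates to a citation: the lemma is stated with a pointer to \cite{gehrke2001bounded} (Proposition 2.6), which is exactly your lattice-reduct part (the filter--ideal polarity realizes $L^\delta$, with closed elements given by single filters and open elements by single ideals), while the operations part is what Definition \ref{def:f-i-f} and Lemma \ref{lemma:prelimfi} are designed to deliver. Two steps in your sketch deserve to be made explicit. First, the reduction to $\epsilon_f$-closed tuples needs that \emph{both} maps are determined by their values there: for $f_{R^\star_f}$ this follows from Proposition \ref{prop:F plus is L star algebra} (complete join-preservation in positive coordinates and complete meet-reversal in negative ones, together with join-density of closed and meet-density of open elements), while for $(f^{\mathbb{A}})^\sigma$ it is the defining clause of the $\sigma$-extension; say it for both sides rather than only for the complex-algebra side. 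Second, your ``nothing is lost'' step is not a consequence of the intersection formula alone: to see that $(R^\star_f)^{(0)}$ applied to the extents/intents of a closed/open tuple (up-sets of single filters $F_i$, resp.\ ideals $I_i$) equals $(R^\star_f)^{(0)}[\overline{F}]$ at the single tuple, you must also observe that $R^\star_f(I,\overline{G})$ is monotone in $\overline{G}$ with respect to coordinatewise inclusion (its defining condition is existential over members), so every member tuple of the product of up-sets yields a superset of $(R^\star_f)^{(0)}[\overline{F}]$, and $\overline{F}$ itself occurs among the member tuples. With these two points in place, Lemma \ref{lemma:prelimfi} identifies the value with the closed element $\bigwedge\{f^{\mathbb{A}}(\overline{b})\mid\overline{b}\in\overline{F}\}$, compactness identifies this with $(f^{\mathbb{A}})^\sigma$ on the corresponding closed/open tuple (in a $\partial$-coordinate, $a\leq\bigvee I_i$ iff $a\in I_i$, exactly matching the $\sigma$-clause), the computation for $g$ is order-dual, and uniqueness of the canonical extension finishes the proof, as you say.
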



\begin{prop}\label{prop:reversing arrows}
	Let $\mathbb{A}$ and $\mathbb{B}$ be $\mathcal{L}$-algebras.
	\begin{enumerate}
		\item If $h:\mathbb{A}\hookrightarrow\mathbb{B}$ then $(S_{h^\delta},T_{h^\delta}):\mathbb{F}^\star_{\mathbb{B}}\twoheadrightarrow \mathbb{F}^\star_{\mathbb{A}}$.
		\item If $h:\mathbb{A}\twoheadrightarrow\mathbb{B}$ then $(S_{h^\delta},T_{h^\delta}):\mathbb{F}^\star_{\mathbb{B}}\hookrightarrow \mathbb{F}^\star_{\mathbb{A}}$.
	\end{enumerate}
\end{prop}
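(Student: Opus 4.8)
The plan is to reduce everything to the already-established duality between complete $\mathcal{L}$-homomorphisms of complex algebras and p-morphisms of $\mathcal{L}$-frames, using the identification $(\mathbb{F}^\star_{\mathbb{A}})^+=\mathbb{A}^\delta$ together with the functoriality of canonical extension.

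First I would recall that $(-)^\delta$ is a functor on $\mathcal{L}$-algebras: every $\mathcal{L}$-homomorphism $h\colon\mathbb{A}\to\mathbb{B}$ lifts to a complete $\mathcal{L}$-homomorphism $h^\delta\colon\mathbb{A}^\delta\to\mathbb{B}^\delta$ (cf.\ \cite{gehrke2001bounded}), and, moreover, $h^\delta$ is injective whenever $h$ is and surjective whenever $h$ is. The latter two facts can be cited, but they also follow directly from density and compactness: restricting attention to closed and open elements one checks $\bigwedge S'\le\bigvee T'$ holds in $\mathbb{A}^\delta$ iff its $h$-image holds in $\mathbb{B}^\delta$ whenever $h$ is an order-embedding, so $h^\delta$ is an order-embedding; and if $h$ is onto then $h^\delta[\mathbb{A}^\delta]$ is a complete sublattice of $\mathbb{B}^\delta$ containing all closed elements, hence all of $\mathbb{B}^\delta$. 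Using the Lemma identifying $(\mathbb{F}^\star_{\mathbb{A}})^+$ with $\mathbb{A}^\delta$ (and likewise for $\mathbb{B}$), in both cases of the statement $h^\delta$ is then a complete $\mathcal{L}$-homomorphism $(\mathbb{F}^\star_{\mathbb{A}})^+\to(\mathbb{F}^\star_{\mathbb{B}})^+$, so Proposition \ref{lemma:lehoms}(2) gives that $(S_{h^\delta},T_{h^\delta})\colon\mathbb{F}^\star_{\mathbb{B}}\to\mathbb{F}^\star_{\mathbb{A}}$ is a p-morphism. It remains to show it is surjective (in case 1) or injective (in case 2).

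The bridge is Proposition \ref{lemma:back and forth}(2): $h^\delta=h_{(S_{h^\delta},T_{h^\delta})}$, which unwinds to $h^\delta(a)=(S_{h^\delta}^{(0)}[\ints{a}],\,T_{h^\delta}^{(0)}[\exts{a}])$ for every $a\in\mathbb{A}^\delta=(\mathbb{F}^\star_{\mathbb{A}})^+$; in particular $\exts{h^\delta(a)}=S_{h^\delta}^{(0)}[\ints{a}]$. Since a concept is determined by its extent, the map $a\mapsto S_{h^\delta}^{(0)}[\ints{a}]$ is just $h^\delta$ up to this identification. For (1): $h$ injective makes $h^\delta$ injective, so $a\neq b$ in $(\mathbb{F}^\star_{\mathbb{A}})^+$ forces $h^\delta(a)\neq h^\delta(b)$, hence $S_{h^\delta}^{(0)}[\ints{a}]=\exts{h^\delta(a)}\neq\exts{h^\delta(b)}=S_{h^\delta}^{(0)}[\ints{b}]$; reading this against the definition of a surjective p-morphism, with $(\mathbb{F}^\star_{\mathbb{A}})^+$ playing the role of $(\mathbb{F}_2)^+$, gives $(S_{h^\delta},T_{h^\delta})\colon\mathbb{F}^\star_{\mathbb{B}}\twoheadrightarrow\mathbb{F}^\star_{\mathbb{A}}$. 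For (2): $h$ surjective makes $h^\delta$ surjective, so for every $a\in(\mathbb{F}^\star_{\mathbb{B}})^+$ there is $b\in(\mathbb{F}^\star_{\mathbb{A}})^+$ with $h^\delta(b)=a$, whence $S_{h^\delta}^{(0)}[\ints{b}]=\exts{h^\delta(b)}=\exts{a}$; reading this against the definition of an injective p-morphism, with $(\mathbb{F}^\star_{\mathbb{B}})^+$ playing the role of $(\mathbb{F}_1)^+$, gives $(S_{h^\delta},T_{h^\delta})\colon\mathbb{F}^\star_{\mathbb{B}}\hookrightarrow\mathbb{F}^\star_{\mathbb{A}}$.

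The only genuine content beyond bookkeeping is the behaviour of $(-)^\delta$ on monos and epis; once that is granted, the argument is a direct unwinding of Propositions \ref{lemma:lehoms} and \ref{lemma:back and forth} plus $(\mathbb{F}^\star_{\mathbb{A}})^+=\mathbb{A}^\delta$. The point to watch is the variance: $h$ runs $\mathbb{A}\to\mathbb{B}$ while $(S_{h^\delta},T_{h^\delta})$ runs $\mathbb{F}^\star_{\mathbb{B}}\to\mathbb{F}^\star_{\mathbb{A}}$, so an embedding of algebras yields a p-morphic image (surjection) of frames and a surjection of algebras yields a generated subframe (injection) of frames.
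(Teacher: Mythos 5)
Your proposal is correct and follows essentially the same route as the paper: lift $h$ to a complete injective (resp.\ surjective) homomorphism $h^\delta$ via \cite[Lemma 4.9]{gehrke2001bounded}, identify $(\mathbb{F}^\star_{\mathbb{A}})^+$ with $\mathbb{A}^\delta$, and dualize through $(S_{h^\delta},T_{h^\delta})$. The only difference is that you make explicit, via Proposition \ref{lemma:back and forth} and the equality $\exts{h^\delta(a)}=S_{h^\delta}^{(0)}[\ints{a}]$, the unwinding of the definitions of surjective/injective p-morphism that the paper's proof leaves implicit.
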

\begin{proof}
 1. Let $h:\mathbb{A}\hookrightarrow\mathbb{B}$ be an injective $\mathcal{L}$-homomorphism. Then $h^\delta:\mathbb{A}^\delta\hookrightarrow\mathbb{B}^\delta$ is a complete injective $\mathcal{L}$-homomorphism (cf.\ \cite[Lemma 4.9]{gehrke2001bounded}) of complete $\mathcal{L}$-algebras. Then  the p-morphism $(S_{h^\delta},T_{h^\delta}):\mathbb{F}^\star_{\mathbb{B}}\twoheadrightarrow \mathbb{F}^\star_{\mathbb{A}}$ is surjective.
 
 2. Let $h:\mathbb{A}\twoheadrightarrow\mathbb{B}$ be a surjective $\mathcal{L}$-homomorphism. Then $h^\delta:\mathbb{A}^\delta\twoheadrightarrow\mathbb{B}^\delta$ is a complete surjective $\mathcal{L}$-homomorphism (cf.\ \cite[Lemma 4.9]{gehrke2001bounded}) of complete $\mathcal{L}$-algebras. Then  the p-morphism $(S_{h^\delta},T_{h^\delta}):\mathbb{F}^\star_{\mathbb{B}}\hookrightarrow \mathbb{F}^\star_{\mathbb{A}}$ is injective.
\end{proof}

\begin{definition}
	Let $\mathbb{F}$ be an $\mathcal{L}$-frame. The \emph{filter-ideal extension} of $\mathbb{F}$ is the $\mathcal{L}$-frame $\mathbb{F}^\star_{\mathbb{F}^+}$.
\end{definition}

\subsection{Ultrapowers of LE-frames}

Let $\mathbb{F}=(W,U,N,\mathcal{R}_\mathcal{F},\mathcal{R}_\mathcal{G})$ be an $\mathcal{L}$-frame. Let $$\mathcal{L}_\mathbb{F}=\{N, (R_f)_{f\in\mathcal{F}},(R_g)_{g\in\mathcal{G}}\}\cup\{P_{\exts{a}}, P_{\ints{a}} \mid a\in\mathbb{F}^+\}$$ be a first-order language with variables of two sorts, which, for convenience, we denote $W$ and $U$. Henceforth we use $x$ to denote variables of sort $W$ and $y$ to denote variables of sort $U$. Each $P_{\exts{a}}$ is a unary $W$-relation and each $P_{\ints{a}}$ is a unary $U$-relation.  The remaining relations have arity and type compatible with the corresponding relations in $\mathbb{F}$. We expand $\mathbb{F}$ to an $\mathcal{L}_\mathbb{F}$-structure with relations $P_{\exts{a}}$ and $P_{\ints{a}}$ such that $P_{\exts{a}}(w)$ if and only if $w\in \exts{a}$ and $P_{\ints{a}}(u)$ if and only if $u\in\ints{a}$.

\begin{definition}[Power of LE-frame]
	Let $\mathbb{F}$ be an $\mathcal{L}$-frame and let  $J$ be a set of indexes. The \emph{$J$-power of  $\mathbb{F}$} is the following $\mathcal{L}_\mathbb{F}$-structure: $$\mathbb{F}^{J}=(W^{J},U^{J},N^{J},(R^{J}_f)_{f\in\mathcal{F}},(R^{J}_g)_{g\in\mathcal{G}},(P^{J}_{\exts{a}},P^{J}_{\ints{a}})_{a\in\mathbb{F}^+})$$ where:
	\begin{enumerate}
		\item $W^{J}$ is the set of functions $s:J\to W$;
		\item $U^{J}$ is the set of functions $t:J\to U$;
		\item $sN^{J}t$ if and only if $s(j)Nt(j)$ for all $j\in J$;
		\item $R_f(t,\overline{s})$ if and only if $R_f(t(j),\overline{s(j)})$ for all $j\in J$;
		\item $R_g(s,\overline{t})$ if and only if $R_g(s(j),\overline{t(j)})$ for all $j\in J$;
		\item $P^{J}_{\exts{a}}(s)$ if and only if $P_{\exts{a}}(s(j))$ for all $j\in J$;
		\item $P^{J}_{\ints{a}}(y)$ if and only if $P_{\ints{a}}(y(j))$ for all $j\in J$.
	\end{enumerate}
\end{definition}
 For every ultrafilter $\mathcal{U}$ over $J$, let $\equiv_W$ and $\equiv_U$ be the equivalence relations on $W^{J}$ and $U^{J}$ respectively defined as follows:
$$s_1\equiv_W s_2\iff \{j\in J\mid s_1(j)=s_2(j)\}\in\mathcal{U}$$
$$t_1\equiv_U t_2\iff \{j\in J\mid t_1(j)=t_2(j)\}\in\mathcal{U}.$$
We let $[s]$ and $[t]$ respectively denote the $\equiv_W$-equivalence class containing $s$ and the $\equiv_U$-equivalence class containing $t$. We let $W^{\mathcal{U}}$ and $U^{\mathcal{U}}$ denote the resulting quotient sets. It is easy to see that the equivalence relations $\equiv_W$ and $\equiv_U$ are congruences with respect to $N^{J},(R^{J}_f)_{f\in\mathcal{F}},(R^{J}_g)_{g\in\mathcal{G}}$ and $(P^{J}_{\exts{a}},P^{J}_{\ints{a}})_{a\in\mathbb{F}^+}$.
\begin{definition}
 For every $\mathbb{F}$, $J$ and $\mathcal{U}$ as above, the \emph{ultrapower} $$\mathbb{F}^{J}_{/\mathcal{U}}=(W^{\mathcal{U}},U^{\mathcal{U}},N^{\mathcal{U}},(R^{\mathcal{U}}_f)_{f\in\mathcal{F}},(R^{\mathcal{U}}_g)_{g\in\mathcal{G}},(P^{\mathcal{U}}_{\exts{a}},P^{\mathcal{U}}_{\ints{a}})_{a\in\mathbb{F}^+})$$ is the $\mathcal{L}_\mathbb{F}$-structure where:
\begin{enumerate}
	\item $[s]N^{\mathcal{U}}[t]$ if and only if $\{j\in J\mid s(j)Nt(j)\}\in\mathcal{U}$;
	\item $R^{\mathcal{U}}_f([t],\overline{[s]})$ if and only if $\{j\in J\mid R_f(t(j),\overline{s(j)})\}\in\mathcal{U}$;
	\item $R^{\mathcal{U}}_g([s],\overline{[t]})$ if and only if $\{j\in J\mid R_g(s(j),\overline{t(j)})\}\in\mathcal{U}$.
	\item $P^{\mathcal{U}}_{\exts{a}}(s)$ if and only if $\{j\in J\mid P_{\exts{a}}(s(j))\}\in\mathcal{U}$;
	\item $P^{\mathcal{U}}_{\ints{a}}(t)$ if and only if $\{j\in J\mid P_{\ints{a}}(t(j))\}\in\mathcal{U}$.
\end{enumerate}
\end{definition}

Henceforth, we will abuse notation and identify $s$ with $[s]$ and $t$ with $[t]$. We will always use $s$ and $t^\partial$ to denote elements of $W^{\mathcal{U}}$ and $t$ and $s^\partial$ to denote elements of $U^{\mathcal{U}}$.

\begin{thm}[\L os] $$\mathbb{F}^{J}_{/\mathcal{U}}\models_{\mathcal{L}_\mathbb{F}}\varphi(\overline{s},\overline{t})\quad\iff\quad\{j\in J\mid \mathbb{F}\models_{\mathcal{L}_\mathbb{F}}\varphi(\overline{s(j)},\overline{t(j)})\}\in\mathcal{U}.$$
\end{thm}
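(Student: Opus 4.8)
The plan is to prove this by induction on the complexity of the $\mathcal{L}_\mathbb{F}$-formula $\varphi$, following the classical argument for \L os's theorem but keeping track of the two sorts $W$ and $U$. Before starting the induction one uses the observation already recorded just above the statement, namely that $\equiv_W$ and $\equiv_U$ are congruences with respect to $N^{J}$, the $R^{J}_f$, the $R^{J}_g$ and the $P^{J}_{\exts{a}}, P^{J}_{\ints{a}}$, so that $\mathbb{F}^{J}_{/\mathcal{U}}$ is a well-defined $\mathcal{L}_\mathbb{F}$-structure and the clauses in its definition do not depend on the chosen representatives. Throughout, we use the standing identification of $s$ with $[s]$ and of $t$ with $[t]$.

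For the base case one treats equalities and relational atoms separately. If $\varphi$ is a sort-$W$ equality $x_1 = x_2$, then $\mathbb{F}^{J}_{/\mathcal{U}} \models [s_1] = [s_2]$ iff $s_1 \equiv_W s_2$ iff $\{j \mid s_1(j) = s_2(j)\} \in \mathcal{U}$, which is exactly $\{j \mid \mathbb{F} \models s_1(j) = s_2(j)\} \in \mathcal{U}$; the sort-$U$ equality is identical with $\equiv_U$ in place of $\equiv_W$. For a relational atom — one of $N(x,y)$, $R_f(y,\overline{x})$, $R_g(x,\overline{y})$, $P_{\exts{a}}(x)$, $P_{\ints{a}}(y)$ — the equivalence is immediate from the definition of $\mathbb{F}^{J}_{/\mathcal{U}}$, since each such relation was defined on equivalence classes precisely by the condition that the corresponding pointwise set lies in $\mathcal{U}$.

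For the inductive step, the Boolean cases use the filter properties of $\mathcal{U}$: upward closure and closure under finite intersections handle $\wedge$ (and then, together with the negation case, $\vee$ and $\rightarrow$), while the fact that $\mathcal{U}$ is an \emph{ultra}filter — exactly one of $X$ and $J \setminus X$ lies in $\mathcal{U}$ for every $X \subseteq J$ — handles $\neg$. For the quantifier case, say $\varphi(\overline{s},\overline{t}) = \exists x\, \psi(x,\overline{s},\overline{t})$ with $x$ of sort $W$ (the sort-$U$ case and the universal quantifiers being analogous, or obtained by the $\neg\exists\neg$ reduction): if $\mathbb{F}^{J}_{/\mathcal{U}} \models \varphi(\overline{s},\overline{t})$, pick a witness $[s'] \in W^{\mathcal{U}}$, apply the induction hypothesis to $\psi(s',\overline{s},\overline{t})$ to get $A := \{j \mid \mathbb{F} \models \psi(s'(j),\overline{s(j)},\overline{t(j)})\} \in \mathcal{U}$, and note $A \subseteq \{j \mid \mathbb{F} \models \exists x\, \psi(x,\overline{s(j)},\overline{t(j)})\}$, so the latter is in $\mathcal{U}$ by upward closure; conversely, if $B := \{j \mid \mathbb{F} \models \exists x\, \psi(x,\overline{s(j)},\overline{t(j)})\} \in \mathcal{U}$, use the axiom of choice to pick for each $j \in B$ some $s'(j) \in W$ with $\mathbb{F} \models \psi(s'(j),\overline{s(j)},\overline{t(j)})$, extend $s'$ to $J$ arbitrarily, observe $B \subseteq \{j \mid \mathbb{F} \models \psi(s'(j),\overline{s(j)},\overline{t(j)})\} \in \mathcal{U}$, and conclude by the induction hypothesis that $\mathbb{F}^{J}_{/\mathcal{U}} \models \psi([s'],\overline{s},\overline{t})$, hence $\mathbb{F}^{J}_{/\mathcal{U}} \models \varphi(\overline{s},\overline{t})$.

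There is essentially no obstacle here: the argument is the textbook one, and the two-sortedness enters only as bookkeeping — in the atomic and quantifier cases each variable and each atom is dispatched according to its sort, using $\equiv_W$ and $W^{\mathcal{U}}$ for sort $W$ and $\equiv_U$ and $U^{\mathcal{U}}$ for sort $U$. The one non-elementary ingredient is the use of choice in the right-to-left direction of the existential case; the only hypothesis worth flagging is that $W$ and $U$ are nonempty (implicit in treating $\mathbb{F}$ as a two-sorted first-order structure), so that $W^{\mathcal{U}}$ and $U^{\mathcal{U}}$ are nonempty and the quantifier clauses are meaningful. Since the induction is carried out with free variables throughout, the displayed equivalence is exactly its conclusion.
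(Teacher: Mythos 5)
Your proof is correct: it is the classical Łoś argument by induction on formula complexity, with the two sorts handled as bookkeeping, which is exactly the standard proof the paper implicitly relies on (the theorem is stated there without proof, being a routine adaptation of the one-sorted case). No discrepancy to report.
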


As an immediate consequence of \L os' Theorem we obtain the following:

\begin{cor}
	For every $\mathbb{F}$, $J$ and $\mathcal{U}$ as above the ultrapower $\mathbb{F}^{J}_{/\mathcal{U}}$ is an $\mathcal{L}$-frame.
\end{cor}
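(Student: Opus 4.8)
The plan is to verify, for $\mathbb{F}^{J}_{/\mathcal{U}}$, the three clauses of Definition~\ref{def: LE frame}. The first two are immediate from the construction: $(W^{\mathcal{U}}, U^{\mathcal{U}}, N^{\mathcal{U}})$ is a polarity, and each $R^{\mathcal{U}}_f$ (resp.\ $R^{\mathcal{U}}_g$) is an $(n_f+1)$-ary (resp.\ $(n_g+1)$-ary) relation of the prescribed type $U^{\mathcal{U}}\times(W^{\mathcal{U}})^{\epsilon_f}$ (resp.\ $W^{\mathcal{U}}\times(U^{\mathcal{U}})^{\epsilon_g}$). So the whole content lies in the stability requirement \eqref{eq:compa1}--\eqref{eq:compa2}: that for all point-parameters $s_0\in W^{\mathcal{U}}$, $t_0\in U^{\mathcal{U}}$, $\overline{s}\in(W^{\mathcal{U}})^{\epsilon_f}$ and $\overline{t}\in(U^{\mathcal{U}})^{\epsilon_g}$, the sets $(R^{\mathcal{U}}_f)^{(0)}[\overline{s}]$, $(R^{\mathcal{U}}_f)^{(i)}[t_0,\bari{s}]$, $(R^{\mathcal{U}}_g)^{(0)}[\overline{t}]$ and $(R^{\mathcal{U}}_g)^{(i)}[s_0,\bari{t}]$ are Galois-stable.

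The crucial observation I would build the proof around is that, although Galois-stability of an \emph{arbitrary} subset of $W$ or $U$ is genuinely second-order, stability of a \emph{section of one of the relations by finitely many individual elements} is first-order expressible. Unwinding the definitions, $R^{(0)}_f[\overline{w}]=\{u\in U\mid R_f(u,\overline{w})\}$ is Galois-stable precisely when
\[
\forall u'\Big(\forall v\big(\forall u\,(R_f(u,\overline{w})\to vNu)\to vNu'\big)\to R_f(u',\overline{w})\Big),
\]
and the three remaining families of sections admit entirely analogous descriptions (with the fixed point-parameters occurring as the corresponding fixed arguments of the relation, and with the Galois maps $(\cdot)^{\uparrow}$, $(\cdot)^{\downarrow}$ unfolded via $N$ according to whether the section in question is a subset of $W$ or of $U$). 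Universally quantifying over those point-parameters, each instance of \eqref{eq:compa1} and \eqref{eq:compa2} turns into a single first-order sentence in $\mathcal{L}_\mathbb{F}$ --- indeed in its sublanguage $\{N\}\cup\{R_f\mid f\in\mathcal{F}\}\cup\{R_g\mid g\in\mathcal{G}\}$, which does not involve the predicates $P_{\exts{a}}, P_{\ints{a}}$. I would stress here that this step uses essentially that Definition~\ref{def: LE frame} requires stability only of sections \emph{by points}: the apparently stronger closure under sections by arbitrary stable sets is Lemma~\ref{lem:Ri closed}, which follows automatically but is not what must be checked.

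Finally, since $\mathbb{F}$ is an $\mathcal{L}$-frame it satisfies all of the sentences just produced. For each such (parameter-free) sentence $\vartheta$, \L os' Theorem gives $\mathbb{F}^{J}_{/\mathcal{U}}\models\vartheta$ iff $\{j\in J\mid \mathbb{F}\models\vartheta\}\in\mathcal{U}$; since this set is $J\in\mathcal{U}$ when $\mathbb{F}\models\vartheta$ (and $\varnothing\notin\mathcal{U}$ otherwise), we conclude $\mathbb{F}^{J}_{/\mathcal{U}}\models\vartheta$, i.e.\ $\mathbb{F}^{J}_{/\mathcal{U}}$ is elementarily equivalent to $\mathbb{F}$ in $\mathcal{L}_\mathbb{F}$. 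Hence $N^{\mathcal{U}}$, the $R^{\mathcal{U}}_f$ and the $R^{\mathcal{U}}_g$ satisfy \eqref{eq:compa1}--\eqref{eq:compa2}, and $\mathbb{F}^{J}_{/\mathcal{U}}$ is an $\mathcal{L}$-frame. I do not expect a real obstacle: the only delicate point is the bookkeeping in the middle paragraph --- checking that every set named in \eqref{eq:compa1}--\eqref{eq:compa2} is a section by elements (so that its stability clause stays first-order) and writing the translation uniformly across all coordinates $i$ and both values of $\epsilon$.
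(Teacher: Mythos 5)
Your proposal is correct and follows the same route as the paper, whose proof simply observes that the compatibility conditions \eqref{eq:compa1}--\eqref{eq:compa2} can be expressed as $\mathcal{L}_\mathbb{F}$-sentences and then (implicitly) transfers them to $\mathbb{F}^{J}_{/\mathcal{U}}$ via \L os' Theorem. You have merely made explicit the first-order unfolding of Galois-stability of the point-parametrised sections and the \L os transfer step, which the paper leaves as a one-line remark.
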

\begin{proof}
	The compatibility conditions can be expressed as $\mathcal{L}_\mathbb{F}$-sentences.
\end{proof}
The following definition is an equivalent reformulation of \cite[beginning of Chapter 5.1]{chang1990model}, cf.\ \cite[Chapter 10.1 Exercise 17]{hodges1993model}.
\begin{definition}
Let $\kappa$ be an infinite cardinal, $M$ be a model  of $\mathcal{L}_\mathbb{F}$, and $\mathcal{L}_\mathbb{F}(M)$ be the language obtained expanding $\mathcal{L}_\mathbb{F}$ with constants symbols for the elements of $M$. Then $M$ is \emph{$\kappa$-saturated} if for any set  $\Sigma$ of formulas in $\mathcal{L}_\mathbb{F}(M)$ such that $\Sigma$ contains 
finitely many free variables $\overline{x}$ and $\overline{y}$ and $|\Sigma|<\kappa$, if $\Sigma$ is finitely satisfied in $M$ then $\Sigma$ is satisfied in $M$.
\end{definition}
\begin{lemma}[cf.\ \cite{chang1990model} Theorem 6.1.8]\label{lem:changultrafilter}
	For any $\mathcal{L}$-frame $\mathbb{F}$ there exists a set $J$ and an ultrafilter $\mathcal{U}$ over $J$ such that $\mathbb{F}^{J}_{/\mathcal{U}}$ is $|\mathcal{L}_\mathbb{F}|^+$-saturated.
\end{lemma}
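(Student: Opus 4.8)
The plan is to reduce the statement to the classical model-theoretic fact that every first-order structure admits a $\kappa^+$-saturated ultrapower, which in turn rests on the existence of good ultrafilters; the authors' concrete definitions above have been arranged precisely so that this reduction is immediate.

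First I would observe that the definitions of the $J$-power $\mathbb{F}^{J}$ and of the quotient $\mathbb{F}^{J}_{/\mathcal{U}}$ given above, together with \L os' Theorem, exhibit $\mathbb{F}^{J}_{/\mathcal{U}}$ as literally the model-theoretic ultrapower of the (two-sorted) $\mathcal{L}_\mathbb{F}$-structure obtained by expanding $\mathbb{F}$ with the predicates $\{P_{\exts{a}}, P_{\ints{a}}\mid a\in\mathbb{F}^+\}$. Consequently the notion of $|\mathcal{L}_\mathbb{F}|^+$-saturation introduced above agrees with the usual model-theoretic one for this structure: a set $\Sigma$ of at most $|\mathcal{L}_\mathbb{F}|$ formulas in finitely many free variables uses at most $|\mathcal{L}_\mathbb{F}|$ parameters, and conversely a type over a set of at most $|\mathcal{L}_\mathbb{F}|$ parameters consists of at most $|\mathcal{L}_\mathbb{F}(A)\text{-formulas}| = |\mathcal{L}_\mathbb{F}|$ formulas, so the two formulations of ``$\kappa^+$-saturated'' coincide when $\kappa = |\mathcal{L}_\mathbb{F}|$.

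Next, put $\kappa := \max\{|\mathcal{L}_\mathbb{F}|,\aleph_0\}$, an infinite cardinal with $|\mathcal{L}_\mathbb{F}|\leq\kappa$. By the Keisler--Kunen theorem on the existence of good ultrafilters (\cite[Theorem 6.1.4]{chang1990model}) there is a countably incomplete, $\kappa^+$-good ultrafilter $\mathcal{U}$ over the index set $J := \kappa$, and by \cite[Theorem 6.1.8]{chang1990model} the ultrapower of any structure whose language has size $\leq\kappa$ by such a $\mathcal{U}$ is $\kappa^+$-saturated. Applied to the $\mathcal{L}_\mathbb{F}$-expansion of $\mathbb{F}$ (a structure in the language $\mathcal{L}_\mathbb{F}$, of size $|\mathcal{L}_\mathbb{F}|\leq\kappa$), this gives that $\mathbb{F}^{J}_{/\mathcal{U}}$ is $\kappa^+$-saturated, hence a fortiori $|\mathcal{L}_\mathbb{F}|^+$-saturated since $|\mathcal{L}_\mathbb{F}|^+\leq\kappa^+$. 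Finally, by the Corollary above, $\mathbb{F}^{J}_{/\mathcal{U}}$ is again an $\mathcal{L}$-frame, which is all that is required.

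The proof is thus essentially an invocation of a standard theorem, and there is no genuine obstacle. The only mild technicality is that $\mathcal{L}_\mathbb{F}$ is two-sorted, whereas the cited theorems are usually phrased single-sortedly; this is dealt with either by appealing to the routine multi-sorted versions of those results, or by coding the sorts $W$ and $U$ via two distinguished unary predicates, running the classical argument on the one-sorted reduct, and translating back. In either case the cardinality bookkeeping is unchanged, since every set involved has size $\leq\kappa$.
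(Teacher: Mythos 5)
Your proposal is correct and matches the paper, which gives no separate argument for this lemma but simply cites Chang--Keisler (Theorem 6.1.8, together with the existence of countably incomplete good ultrafilters); your write-up is just that citation spelled out, with the two-sorted bookkeeping made explicit.
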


\section{Enlargement property for LE-logics}
\label{sec:enlargement}
In the classical modal logic setting, the main step of the Goldblatt-Thomason theorem consists in showing that the ultrafilter extension of the disjoint union of a certain family of elements of the class $\mathsf{K}$ of Kripke frames belongs to $\mathsf{K}$. This is done by showing that this ultrafilter extension is the p-morphic image of some ultrapower (cf.\ \cite[Theorem 3.17]{blackburn2002modal}). Goldblatt refers to this existence property as the \emph{enlargement property},  and proves it in the context of polarities (cf.\ \cite[Theorem 6.2]{Go17}).\footnote{In fact, Goldblatt states and proves that there exists an embedding $e:(\mathbb{P}^+)^{\delta}\hookrightarrow(\mathbb{P}^{J}/\mathcal{U})^+$ for some set $J$ and some ultrafilter $\mathcal{U}$ over $J$. The proof for the Kripke frame analogue of this result follows from a construction involving a p-morphism defined on an ultrapower of a structure, and constructs the required embedding as the dual of that p-morphism. This is the strategy we follow in the present paper. However Goldblatt's proof of \cite[Theorem 6.2]{Go17}  does not take this approach, but instead uses \cite[Theorem 3.2]{gehrke2006macneille} about embedding a canonical extension into a MacNeille completion.} In this section we prove the enlargement property for $\mathcal{L}$-frames. In what follows, we fix an LE-signature $\mathcal{L}=\mathcal{L}(\mathcal{F},\mathcal{G})$ and an $\mathcal{L}$-frame $\mathbb{F}$.


\begin{thm}[Enlargement property]\label{thm:embetoup}
	 There exists a surjective p-morphism $(S,T):\mathbb{F}^{J}/\mathcal{U}\twoheadrightarrow\mathbb{F}^\star_{\mathbb{F}^+}$ for some set $J$ and some ultrafilter $\mathcal{U}$ over $J$. 
\end{thm}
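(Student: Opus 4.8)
The plan is to transport to the two-sorted polarity setting the classical argument that an ultrafilter extension is a p-morphic image of an ultrapower (cf.\ \cite[Theorem 3.17]{blackburn2002modal}). As a preliminary, for an \emph{arbitrary} index set $J$ and ultrafilter $\mathcal{U}$ over $J$ I would show that the map
\[
g\colon a\longmapsto\bigl(\{s\in W^{\mathcal{U}}\mid P^{\mathcal{U}}_{\exts{a}}(s)\},\ \{t\in U^{\mathcal{U}}\mid P^{\mathcal{U}}_{\ints{a}}(t)\}\bigr)
\]
is an embedding of $\mathcal{L}$-algebras $\mathbb{F}^+\hookrightarrow(\mathbb{F}^{J}/\mathcal{U})^+$: the facts that $(\exts{a},\ints{a})$ is a Galois-stable pair, that $\exts{\cdot}$ and $\ints{\cdot}$ compute the lattice operations of $\mathbb{W}^+$ (Definition \ref{def:polarity}), and that the clauses of Definition \ref{def:complexalg} hold for $f_{R_f}$ and $g_{R_g}$, are all, once the concepts involved are named by predicate symbols of $\mathcal{L}_\mathbb{F}$, first-order $\mathcal{L}_\mathbb{F}$-sentences true in $\mathbb{F}$, hence true in $\mathbb{F}^{J}/\mathcal{U}$ by \L os' theorem; this yields that each $g(a)$ is an element of $(\mathbb{F}^{J}/\mathcal{U})^+$, that $g$ is an $\mathcal{L}$-homomorphism, and (transferring $\exists x(P_{\exts{a}}(x)\wedge\neg P_{\exts{b}}(x))$, which holds in $\mathbb{F}$ when $a\not\leq b$) that $g$ is injective.

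Next, by Lemma \ref{lem:changultrafilter} I would fix $J$ and $\mathcal{U}$ with $\mathbb{F}^{J}/\mathcal{U}$ being $|\mathcal{L}_\mathbb{F}|^+$-saturated; as $\mathcal{L}_\mathbb{F}$ names every element of $\mathbb{F}^+$, every filter and every ideal of $\mathbb{F}^+$ has cardinality $<|\mathcal{L}_\mathbb{F}|^+$. For $s\in W^{\mathcal{U}}$ put $\hat s:=\{a\in\mathbb{F}^+\mid s\in\exts{g(a)}\}$, which is a filter of $\mathbb{F}^+$ (by the previous paragraph $\exts{g(\cdot)}$ preserves $\wedge$ and is monotone), and dually let $\check t:=\{a\mid t\in\ints{g(a)}\}$, an ideal, for $t\in U^{\mathcal{U}}$. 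Define $(S,T)\colon\mathbb{F}^{J}/\mathcal{U}\to\mathbb{F}^\star_{\mathbb{F}^+}$ by
\[
s\,S\,I\iff\hat s\cap I\neq\varnothing\ \ (I\in\mathfrak{I}_{\mathbb{F}^+}),\qquad t\,T\,F\iff\check t\cap F\neq\varnothing\ \ (F\in\mathfrak{F}_{\mathbb{F}^+}).
\]
Condition p1 of Definition \ref{def:lehoms} is immediate; $S^{(1)}[s]=\{\hat s\}^{\uparrow}$ is a principal up-set of the filter-ideal polarity, hence stable, and likewise for $T$. The stability of $S^{(0)}[I]=\bigcup_{a\in I}\exts{g(a)}$ is the first point where saturation is needed: if some $s$ lay in the Galois closure $\bigl(\bigcap_{a\in I}\ints{g(a)}\bigr)^{\downarrow}$ of $S^{(0)}[I]$ but in no $\exts{g(a)}$ with $a\in I$, then the type $\{P_{\ints{a}}(y)\mid a\in I\}\cup\{\neg(c_s\,N\,y)\}$ (with $c_s$ naming $s$) would be finitely satisfiable in $\mathbb{F}^{J}/\mathcal{U}$ — a finite join of members of $I$ is again in $I$, and $s\notin\exts{g(\bigvee I')}$ supplies a feature of $g(\bigvee I')$ not $N^{\mathcal{U}}$-related to $s$ — hence realized, contradicting $s\in\bigl(\bigcap_{a\in I}\ints{g(a)}\bigr)^{\downarrow}$; dually for $T$, giving p2 and p3.

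Conditions p4 and p5 then unwind, using that $\exts{g(a)}$ and $\ints{g(a)}$ are the extent and intent of the single concept $g(a)$, to the trivial implications that $F\subseteq\hat s$ forces $\hat s\cap I\supseteq F\cap I$ and that $a\in\hat s\cap\check t$ forces $s\,N^{\mathcal{U}}\,t$. Conditions p6 and p7 are the technical heart: one expands their left-hand sides by Lemma \ref{lemma:prelimfi} and the defining clauses of $R^\star_f$ and $R^\star_g$ in Definition \ref{def:f-i-f}, expands their right-hand sides by the ultrapower definitions of $R^{\mathcal{U}}_f$ and $R^{\mathcal{U}}_g$, and matches the two sides using that $g$ preserves the connectives $f$ and $g$ (from the preliminary step), the nontrivial inclusion again resting on a saturation argument of the same shape as for p2. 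This is the step I expect to be the main obstacle: it is where the frame's compatibility conditions and the relational semantics of the operators must be threaded simultaneously through \L os transfer and $\kappa$-saturation.

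Finally, for surjectivity it suffices by Definition \ref{def:lehoms} to show that $a\mapsto S^{(0)}[\ints{a}]$ is injective on $(\mathbb{F}^\star_{\mathbb{F}^+})^+=(\mathbb{F}^+)^{\delta}$. Identifying the $W$-points of the filter-ideal frame — the filters $F$ of $\mathbb{F}^+$ — with the closed elements $\bigwedge F$ of $(\mathbb{F}^+)^{\delta}$, and using compactness (namely that $\bigwedge F\leq\bigvee I$ holds iff $F\cap I\neq\varnothing$), one computes $S^{(0)}[\ints{a}]=\{s\in W^{\mathcal{U}}\mid\bigwedge\hat s\leq a\}$. Given $a\not\leq b$, choose a closed element $\bigwedge F_0\leq a$ with $\bigwedge F_0\not\leq b$ and then an ideal $I_0$ with $b\leq\bigvee I_0$ and $F_0\cap I_0=\varnothing$. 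For all finite $a_1,\dots,a_n\in F_0$ and $b_1,\dots,b_m\in I_0$ one has $a_1\wedge\cdots\wedge a_n\not\leq b_1\vee\cdots\vee b_m$ (else $F_0\cap I_0\neq\varnothing$), so the type $\{P_{\exts{a}}(x)\mid a\in F_0\}\cup\{\neg P_{\exts{b}}(x)\mid b\in I_0\}$ is finitely satisfiable in $\mathbb{F}$, hence realized by some $s\in W^{\mathcal{U}}$; this $s$ has $\bigwedge\hat s\leq\bigwedge F_0\leq a$ and $\bigwedge\hat s\not\leq b$, so $s\in S^{(0)}[\ints{a}]\setminus S^{(0)}[\ints{b}]$. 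Once the closed-element and compactness bookkeeping for the canonical extension is set up, this last part is a routine adaptation of the classical argument.
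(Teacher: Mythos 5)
Your proposal is essentially the paper's own proof: you take the same $|\mathcal{L}_\mathbb{F}|^+$-saturated ultrapower supplied by Lemma \ref{lem:changultrafilter}, define exactly the same pair $(S,T)$ (your $\hat s$ and $\check t$ are the paper's $F_s$ and $I_t$, so your clauses $\hat s\cap I\neq\varnothing$ and $\check t\cap F\neq\varnothing$ coincide with \eqref{eq:defS} and \eqref{eq:defT}), and verify p2--p5 and surjectivity by the same type-realization arguments as Lemmas \ref{lem:compatiblerel}, \ref{lem:twoisone} and \ref{lem:injective}, the only organizational difference being your preliminary \L os-transferred embedding of $\mathbb{F}^+$ into $(\mathbb{F}^{J}/\mathcal{U})^+$, which lets you witness finite satisfiability inside the ultrapower where the paper instead builds witnesses coordinatewise (Lemmas \ref{lem:goingdown} and \ref{lem:fintoone}). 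The step you leave as a sketch, p6/p7, is exactly the paper's Lemma \ref{lem:oppres}, and the strategy you name (rewrite the left-hand side via Lemma \ref{lemma:prelimfi}, collapse finite meets using closure of filters and monotonicity of the operation, then realize a multi-variable type consisting of the coordinate membership formulas together with a negated relational atom by saturation, the converse inclusion being a direct ultrafilter computation) is precisely how it is carried out there, so no idea is missing.
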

\begin{proof}
	The proof will proceed in a series of lemmas, proven below. Let $J$ and $\mathcal{U}$ be as in Lemma \ref{lem:changultrafilter}, i.e.\ such that $\mathbb{F}^{J}_{/\mathcal{U}}$ is $|\mathcal{L}_\mathbb{F}|^+$-saturated. Let $(S,T):\mathbb{F}^J_{/\mathcal{U}}\to F^\star_{\mathbb{F}^+}$ (cf.\ Definition \ref{def:lehoms}), where $S\subseteq W^\mathcal{U}\times\mathfrak{I}_{\mathbb{F}^+}$ and $T\subseteq U^\mathcal{U}\times\mathfrak{F}_{\mathbb{F}^+}$  are defined as follows: \begin{equation}\label{eq:defS}
	sS I\quad\iff\quad s^{-1}[\exts{c}]\in\mathcal{U}\text{ for some }c\in I
	\end{equation}
	\begin{equation}\label{eq:defT}
	tTF\quad\iff\quad t^{-1}[\ints{c}]\in\mathcal{U}\text{ for some }c\in F.
	\end{equation} The relations $S$ and $T$ satisfy the conditions of Definition \ref{def:lehoms}. Indeed, Lemma \ref{lem:compatiblerel} shows that condition p2 and p3 are satisfied. Lemma \ref{lem:twoisone} shows conditions p4 and p5 are satisfied. Lemma \ref{lem:oppres} shows that conditions p6 and p7 are satisfied. Finally, Lemma \ref{lem:injective} implies that  $(S,T)$ is surjective.
\end{proof}

The following two technical lemmas will simplify the further computations.
\begin{lemma}\label{lem:goingdown} The following hold:
	\begin{enumerate}
		\item $(T^{(0)}[F])^{\downarrow}=\{s\in W^\mathcal{U}\mid \{c\in\mathbb{F}^+\mid s^{-1}[\exts{c}]\in\mathcal{U}\}\supseteq F\}$;
		\item $(S^{(0)}[I])^{\uparrow}=\{t\in U^\mathcal{U}\mid \{c\in\mathbb{F}^+\mid t^{-1}[\ints{c}]\in\mathcal{U}\}\supseteq I\}$.
	\end{enumerate}
\end{lemma}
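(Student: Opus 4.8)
The plan is to prove both equalities by unwinding the definitions of $S$, $T$ (as given by \eqref{eq:defS} and \eqref{eq:defT}), of the polarity relation $N^{\mathcal{U}}$ of $\mathbb{F}^{J}/\mathcal{U}$, and of the operators $(\cdot)^{\downarrow}$ and $(\cdot)^{\uparrow}$, and then to invoke only elementary ultrafilter combinatorics; no saturation is needed here. The two items are order-dual, so I would carry out item 1 in full and obtain item 2 by swapping the roles of $W$ and $U$, of $\exts{\cdot}$ and $\ints{\cdot}$, of filters and ideals, and of $\downarrow$ and $\uparrow$.

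First I would record the auxiliary observation that, for every $s\in W^{\mathcal{U}}$, the set $F_s:=\{c\in\mathbb{F}^+\mid s^{-1}[\exts{c}]\in\mathcal{U}\}$ is a filter of $\mathbb{F}^+$: upward closure follows from the monotonicity of $c\mapsto\exts{c}$ together with the upward closure of $\mathcal{U}$; closure under binary meets follows from $\exts{c\wedge d}=\exts{c}\cap\exts{d}$ together with the closure of $\mathcal{U}$ under finite intersections; and $\top\in F_s$ since $\exts{\top}=W$, so $s^{-1}[\exts{\top}]=J\in\mathcal{U}$. Dually, $I_t:=\{c\in\mathbb{F}^+\mid t^{-1}[\ints{c}]\in\mathcal{U}\}$ is an ideal, for item 2. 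With this in hand, the right-hand side of item 1 is exactly $\{s\in W^{\mathcal{U}}\mid F\subseteq F_s\}$; and, unwinding $T^{(0)}[F]=\{t\in U^{\mathcal{U}}\mid t^{-1}[\ints{c}]\in\mathcal{U}\text{ for some }c\in F\}$ and the definition of $(\cdot)^{\downarrow}$, the left-hand side is $\{s\in W^{\mathcal{U}}\mid \forall t\in U^{\mathcal{U}}(\,\exists c\in F\ t^{-1}[\ints{c}]\in\mathcal{U}\ \Rightarrow\ sN^{\mathcal{U}}t\,)\}$. So it remains to show these two sets coincide.

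For the inclusion $\{s\mid F\subseteq F_s\}\subseteq(T^{(0)}[F])^{\downarrow}$: given $s$ with $F\subseteq F_s$ and $t$ with $t^{-1}[\ints{c}]\in\mathcal{U}$ for some $c\in F$, I have $c\in F_s$, hence $s^{-1}[\exts{c}]\in\mathcal{U}$; intersecting, $s^{-1}[\exts{c}]\cap t^{-1}[\ints{c}]\in\mathcal{U}$, and on this set $s(j)\in\exts{c}$ and $t(j)\in\ints{c}=\exts{c}^{\uparrow}$ force $s(j)Nt(j)$; therefore $\{j\mid s(j)Nt(j)\}\in\mathcal{U}$, i.e.\ $sN^{\mathcal{U}}t$. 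For the converse inclusion I would argue contrapositively: if $c\in F$ but $c\notin F_s$, then $s^{-1}[W\setminus\exts{c}]\in\mathcal{U}$; using $\exts{c}=\ints{c}^{\downarrow}$, for each $j$ with $s(j)\notin\exts{c}$ I choose $u_j\in\ints{c}$ with $\neg(s(j)Nu_j)$, and for the remaining $j$ I choose any $u_j\in\ints{c}$, which exists since $\ints{c}=\varnothing$ would force $c=\top$ and hence $s^{-1}[\exts{c}]=J\in\mathcal{U}$, contradicting $c\notin F_s$. Then $t:=(u_j)_{j\in J}$ satisfies $t^{-1}[\ints{c}]=J\in\mathcal{U}$, so $t\in T^{(0)}[F]$, while $\{j\mid\neg(s(j)Nt(j))\}\supseteq s^{-1}[W\setminus\exts{c}]\in\mathcal{U}$, so $\neg(sN^{\mathcal{U}}t)$, witnessing $s\notin(T^{(0)}[F])^{\downarrow}$.

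This witness-selection in the converse inclusion is the only genuinely non-mechanical step, and I do not foresee a real obstacle there beyond taking care of the degenerate case $\ints{c}=\varnothing$ as above; everything else is definition-chasing plus the elementary fact that $F_s$ is a filter (resp.\ $I_t$ an ideal). Item 2 is then obtained verbatim by the order-dual argument.
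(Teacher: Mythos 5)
Your proof is correct and follows essentially the same route as the paper's: the forward inclusion by intersecting $s^{-1}[\exts{c}]$ and $t^{-1}[\ints{c}]$ in $\mathcal{U}$ and using $wNu$ for $w\in\exts{c}$, $u\in\ints{c}$, and the converse by contraposition, passing to $s^{-1}[W\setminus\exts{c}]\in\mathcal{U}$ and building a counterexample $t$ from witnesses $u_w\in\ints{c}$ with $\neg(wNu_w)$. Your extra remarks (that $F_s$ is a filter, and the explicit handling of the degenerate case $\ints{c}=\varnothing$) are harmless refinements of the same argument.
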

\begin{proof}
	We only prove item 1, the proof of item 2 being dual.  Let $s$ be such that $\{c\in\mathbb{F}^+\mid s^{-1}[\exts{c}]\in\mathcal{U}\}\supseteq F$. Now for every $t\in T^{(0)}[F]$, there exists a $c_t\in F$ such that $t^{-1}[\ints{c_t}]\in\mathcal{U}$. By the definition of $s$ we have that $s^{-1}[\exts{c_t}]\in\mathcal{U}$. Since $\mathcal{U}$ is an ultrafilter  $t^{-1}[\ints{c_t}]\cap s^{-1}[\exts{c_t}]\in\mathcal{U}$. Recall that $wNu$ for every $w\in \exts{c_t}$ and every $u\in\ints{c_t}$. Therefore $s(j)Nt(j)$ for every $j\in  t^{-1}[\ints{c_t}]\cap s^{-1}[\exts{c_t}]$, i.e.\ $sN^\mathcal{U}t$, which shows that $s\in (T^{(0)}[F])^\downarrow$, as required.
	
	For the converse direction, assume contrapositively that $s^{-1}[\exts{c_0}]\notin \mathcal{U}$ for some $c_0\in F$. Since $\mathcal{U}$ is an ultrafilter, this implies that $s^{-1}[W\setminus\exts{c_0}]\in \mathcal{U}$. For every $w\in W\setminus \exts{c_0}$ there exists $u_w\in \ints{c_0}$ such that $(w,u_w)\notin N$. Let $t$ be such that $t(j)=u_{s(j)}$ for $j\in s^{-1}[W\setminus \exts{c_0}]$. Since $t^{-1}[\ints{c_0}]\supseteq  s^{-1}[W\setminus \exts{c_0}]$ it follows that $t^{-1}[\ints{c_0}]\in\mathcal{U}$, i.e. $t\in T^{(0)}[F]$. However, $(s(j),t(j))\notin N $ for every $j\in s^{-1}[W\setminus \exts{c_0}]$, i.e.\ $s\notin (T^{(0)}[F])^{\downarrow}$.\end{proof}

\begin{lemma}\label{lem:fintoone}
	Let $c_1,\ldots,c_n\in\mathbb{F}^+$. For any $w\in W, u\in U, s\in W^\mathcal{U}$ and $t\in U^\mathcal{U}$. The following implications hold:
	\begin{enumerate}
		\item \begin{enumerate}
			\item $\mathbb{F}\models_{\mathcal{L}_\mathbb{F}}P_{\exts{c_1\land\ldots\land c_n}}(w)\quad\iff\quad\mathbb{F}\models_{\mathcal{L}_\mathbb{F}}P_{\exts{c_1}}(w)\land\ldots\land P_{\exts{c_n}}(w)$;
			\item $\mathbb{F}\models_{\mathcal{L}_\mathbb{F}}\lnot P_{\ints{c_1\land\ldots\land c_n}}(u)\quad\Longrightarrow\quad\mathbb{F}\models_{\mathcal{L}_\mathbb{F}}\lnot P_{\ints{c_1}}(u)\land\ldots\land \lnot P_{\ints{c_n}}(u)$;
			\item $\mathbb{F}\models_{\mathcal{L}_\mathbb{F}}P_{\ints{c_1\lor\ldots\lor c_n}}(u)\quad\iff\quad\mathbb{F}\models_{\mathcal{L}_\mathbb{F}}P_{\ints{c_1}}(u)\land\ldots\land P_{\ints{c_n}}(u)$;
			\item $\mathbb{F}\models_{\mathcal{L}_\mathbb{F}}\lnot P_{\exts{c_1\lor\ldots\lor c_n}}(w)\quad\Longrightarrow\quad\mathbb{F}\models_{\mathcal{L}_\mathbb{F}}\lnot P_{\exts{c_1}}(w)\land\ldots\land \lnot P_{\exts{c_n}}(w)$.
		\end{enumerate}
		\item \begin{enumerate}
			\item $\mathbb{F}^{J}/\mathcal{U}\models_{\mathcal{L}_\mathbb{F}}P_{\exts{c_1\land\ldots\land c_n}}(s)\quad\iff\quad\mathbb{F}^{J}/\mathcal{U}\models_{\mathcal{L}_\mathbb{F}}P_{\exts{c_1}}(s)\land\ldots\land P_{\exts{c_n}}(s)$;
			\item $\mathbb{F}^{J}/\mathcal{U}\models_{\mathcal{L}_\mathbb{F}}\lnot P_{\ints{c_1\land\ldots\land c_n}}(t)\quad\Longrightarrow\quad\mathbb{F}^{J}/\mathcal{U}\models_{\mathcal{L}_\mathbb{F}}\lnot P_{\ints{c_1}}(t)\land\ldots\land \lnot P_{\ints{c_n}}(t)$;
			\item $\mathbb{F}^{J}/\mathcal{U}\models_{\mathcal{L}_\mathbb{F}}P_{\ints{c_1\lor\ldots\lor c_n}}(t)\quad\iff\quad\mathbb{F}^{J}/\mathcal{U}\models_{\mathcal{L}_\mathbb{F}}P_{\ints{c_1}}(t)\land\ldots\land P_{\ints{c_n}}(t)$;
			\item $\mathbb{F}^{J}/\mathcal{U}\models_{\mathcal{L}_\mathbb{F}}\lnot P_{\exts{c_1\lor\ldots\lor c_n}}(s)\quad\Longrightarrow\quad\mathbb{F}^{J}/\mathcal{U}\models_{\mathcal{L}_\mathbb{F}}\lnot P_{\exts{c_1}}(s)\land\ldots\land \lnot P_{\exts{c_n}}(s)$.
		\end{enumerate}
	\end{enumerate}
\end{lemma}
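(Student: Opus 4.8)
The plan is to prove part 1 first, reducing each of its four implications to an elementary fact about the concept lattice $\mathbb{F}^+$, and then to derive part 2 from part 1 by a single appeal to \L os' theorem.

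For part 1 I would use that in $\mathbb{F}^+=\mathbb{W}^+$ finite meets are computed extent-wise and finite joins are computed intent-wise, together with the monotonicity of $a\mapsto\exts{a}$ and the antitonicity of $a\mapsto\ints{a}$ with respect to the lattice order. Concretely, for (a) we have $\exts{c_1\land\cdots\land c_n}=\bigcap_{1\leq i\leq n}\exts{c_i}$, so $w\in\exts{c_1\land\cdots\land c_n}$ iff $w\in\exts{c_i}$ for every $i$, and unwinding the interpretation of the predicates $P_{\exts{\cdot}}$ turns this into the stated biconditional; item (c) is dual, using $\ints{c_1\lor\cdots\lor c_n}=\bigcap_{1\leq i\leq n}\ints{c_i}$. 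For (b), since $c_1\land\cdots\land c_n\leq c_i$ and $\ints{\cdot}$ is antitone, $\ints{c_i}\subseteq\ints{c_1\land\cdots\land c_n}$, so $u\notin\ints{c_1\land\cdots\land c_n}$ forces $u\notin\ints{c_i}$ for each $i$; item (d) is dual, using $c_i\leq c_1\lor\cdots\lor c_n$ and the monotonicity of $\exts{\cdot}$.

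For part 2 I would observe that each of the four statements of part 1, once prefixed by the appropriate universal quantifier over $w$ (resp.\ over $u$), is a first-order sentence of $\mathcal{L}_\mathbb{F}$ which, by part 1, holds in the $\mathcal{L}_\mathbb{F}$-structure $\mathbb{F}$. By \L os' theorem the same sentences hold in every ultrapower $\mathbb{F}^{J}/\mathcal{U}$, which is precisely the content of the corresponding items of part 2. Alternatively one can argue directly: $\mathbb{F}^{J}/\mathcal{U}\models P_{\exts{a}}(s)$ iff $\{j\in J\mid\mathbb{F}\models P_{\exts{a}}(s(j))\}\in\mathcal{U}$, and one then lifts the pointwise facts of part 1 using that a filter contains a finite intersection of sets iff it contains each of them (for the conjunctions in (a) and (c)) and that an ultrafilter contains a set or its complement (for the negations in (b) and (d)).

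Since this is a routine bookkeeping lemma, there is no real obstacle. The only points worth a moment's attention are: first, why items (b) and (d) are genuinely only implications and not biconditionals, which traces back to the fact that $\ints{c_1\land\cdots\land c_n}$ is the Galois closure of $\bigcup_i\ints{c_i}$ and may properly contain that union (dually for the extent of the join); and second, in the direct version of the argument for part 2, the use of the \emph{ultra}filter property rather than merely the filter property to pass between $\lnot P^{\mathcal{U}}$ holding and the set on which $\lnot P$ holds belonging to $\mathcal{U}$ --- a subtlety that the appeal to \L os' theorem bypasses entirely.
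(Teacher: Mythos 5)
Your proof is correct. Part 1 is essentially the paper's own argument: meets are computed extent-wise, joins intent-wise, and the one-way items (b) and (d) come from the inclusions $\ints{c_i}\subseteq\ints{c_1\land\cdots\land c_n}$ and $\exts{c_i}\subseteq\exts{c_1\lor\cdots\lor c_n}$, which may be proper. For part 2, however, your main route differs from the paper's: you deduce it from part 1 by quantifying universally and invoking \L os' theorem for the resulting $\mathcal{L}_\mathbb{F}$-sentences (legitimate, since $\mathcal{L}_\mathbb{F}$ has a predicate for \emph{every} element of $\mathbb{F}^+$, in particular for the finite meets and joins), whereas the paper redoes the computation directly in $\mathbb{F}^{J}_{/\mathcal{U}}$, unfolding $P^{\mathcal{U}}_{\exts{a}}(s)$ as $s^{-1}[\exts{a}]\in\mathcal{U}$ and using that preimages commute with finite intersections and that $\mathcal{U}$ is closed under finite intersections and upward closed. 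Your route is shorter and makes transparent that part 2 is automatic from part 1 plus first-order definability; the paper's direct computation is self-contained and exhibits exactly which properties of $\mathcal{U}$ are used, e.g.\ that in item 2(b) the step from $t^{-1}[(\ints{c_1}\cup\cdots\cup\ints{c_n})^{\downarrow\uparrow}]\notin\mathcal{U}$ to each $t^{-1}[\ints{c_i}]\notin\mathcal{U}$ needs only upward closure. On that last point, your remark in the alternative direct argument slightly overstates what is needed: for (b) and (d) one can argue entirely with ``not in $\mathcal{U}$'' via the inclusions above, so the genuinely \emph{ultra} property (a set or its complement lies in $\mathcal{U}$) is not required there; it is of course harmlessly available, and your primary \L os argument sidesteps the issue altogether.
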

\begin{proof}
	\begin{enumerate}
		\item We only show the first two, the remaining two being dual:
		\begin{enumerate}
			\item \begin{align*}
			&	\quad\mathbb{F}\models_{\mathcal{L}_\mathbb{F}}P_{\exts{c_1\land\ldots\land c_n}}(w)\\
			\iff & \quad w\in\exts{c_1\land\ldots\land c_n}\\
			\iff & \quad w\in\exts{c_1}\cap\ldots\cap\exts{c_n}\\
			\iff & \quad \mathbb{F}\models_{\mathcal{L}_\mathbb{F}}P_{\exts{c_1}}(w)\land\ldots\land P_{\exts{c_n}}(w).
			\end{align*}
			\item   \begin{align*}
			&	\quad\mathbb{F}\models_{\mathcal{L}_\mathbb{F}}\lnot P_{\ints{c_1\land\ldots\land c_n}}(u)\\
			\iff & \quad u\notin\ints{c_1\land\ldots\land c_n}\\
			\Longrightarrow & \quad w\notin\ints{c_1}\cup\ldots\cup\ints{c_n}\\
			\iff & \quad \mathbb{F}\models_{\mathcal{L}_\mathbb{F}}\lnot P_{\ints{c_1}}(u)\land\ldots\land \lnot P_{\ints{c_n}}(u).
			\end{align*}
		\end{enumerate}
	\item  	We only show the first two, the remaining two being dual:
	\begin{enumerate}
		\item \begin{align*}
		&	\quad\mathbb{F}^{J}/\mathcal{U}\models_{\mathcal{L}_\mathbb{F}}P_{\exts{c_1\land\ldots\land c_n}}(s)\\
		\iff & \quad  s^{-1}[\exts{c_1\land\ldots\land c_n}]\in\mathcal{U}\\
		\iff & \quad s^{-1}[\exts{c_1}\cap\ldots\cap\exts{c_n}]\in\mathcal{U}\\
		\iff & \quad s^{-1}[\exts{c_1}]\cap\ldots\cap s^{-1}[\exts{c_n}]\in\mathcal{U}\\
		\iff & \quad s^{-1}[\exts{c_1}]\in\mathcal{U}\text{ and }\ldots \text{ and }s^{-1}[\exts{c_n}]\in\mathcal{U}\\
		\iff & \quad\mathbb{F}^{J}/\mathcal{U}\models_{\mathcal{L}_\mathbb{F}}P_{\exts{c_1}}(s)\land\ldots\land P_{\exts{c_n}}(s).
		\end{align*}
		\item   \begin{align*}
		&	\quad\mathbb{F}^{J}/\mathcal{U}\models_{\mathcal{L}_\mathbb{F}}\lnot P_{\ints{c_1\land\ldots\land c_n}}(t)\\
		\iff & \quad  t^{-1}[\ints{c_1\land\ldots\land c_n}]\notin\mathcal{U}\\
		\iff & \quad t^{-1}[(\ints{c_1}\cup\ldots\cup\ints{c_n})^{\downarrow\uparrow}]\notin\mathcal{U}\\
		\Longrightarrow &\quad s^{-1}[(\ints{c_1}\cup\ldots\cup\ints{c_n})]\notin\mathcal{U}\\
		\iff & \quad t^{-1}[\ints{c_1}]\cup\ldots\cup t^{-1}[\ints{c_n}]\notin\mathcal{U}\\
		\iff & \quad t^{-1}[\ints{c_1}]\notin\mathcal{U}\text{ and }\ldots \text{ and }t^{-1}[\ints{c_n}]\notin\mathcal{U}\\
		\iff & \quad\mathbb{F}^{J}/\mathcal{U}\models_{\mathcal{L}_\mathbb{F}}\lnot P_{\ints{c_1}}(t)\land\ldots\land \lnot P_{\ints{c_n}}(t).
		\end{align*}
		\end{enumerate}
\end{enumerate} \end{proof}
\begin{lemma}\label{lem:compatiblerel}
	For every $t\in U^\mathcal{U}$, $s\in W^\mathcal{U}$, $F\in\mathfrak{F}_{\mathbb{F}^+}$ and $I\in\mathfrak{I}_{\mathbb{F}^+}$,
	\begin{enumerate}
		\item the sets $S^{(0)}[I]$ and $S^{(1)}[t]$ are Galois stable;
		\item the sets $T^{(0)}[F]$ and $T^{(1)}[s]$ are Galois stable.
		\end{enumerate}
\end{lemma}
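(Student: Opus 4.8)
The plan is to verify conditions p2 and p3 of Definition \ref{def:lehoms} for the pair $(S,T)$ defined in \eqref{eq:defS} and \eqref{eq:defT}, splitting the sets to be shown stable into an easy half ($S^{(1)}[s]$ for $s\in W^\mathcal{U}$ and $T^{(1)}[t]$ for $t\in U^\mathcal{U}$) and a hard half ($S^{(0)}[I]$ and $T^{(0)}[F]$). For the easy half I would introduce the auxiliary sets $\phi(s) := \{c \in \mathbb{F}^+ \mid s^{-1}[\exts{c}] \in \mathcal{U}\}$ and $\psi(t) := \{c \in \mathbb{F}^+ \mid t^{-1}[\ints{c}] \in \mathcal{U}\}$; since $\mathcal{U}$ is an ultrafilter, Lemma \ref{lem:fintoone} together with the (anti)monotonicity of $c \mapsto \exts{c}$ and $c \mapsto \ints{c}$ shows that $\phi(s)$ is a nonempty filter and $\psi(t)$ a nonempty ideal of $\mathbb{F}^+$. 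Then, unravelling \eqref{eq:defS}, one has $sSI$ iff $\phi(s) \cap I \neq \varnothing$ iff $\phi(s)\, N^\star I$, so $S^{(1)}[s] = \{I \in \mathfrak{I}_{\mathbb{F}^+} \mid \phi(s)\, N^\star I\}$, which is a set of the form $X^\uparrow$ in the polarity $(\mathfrak{F}_{\mathbb{F}^+}, \mathfrak{I}_{\mathbb{F}^+}, N^\star)$ and hence Galois-stable; dually $T^{(1)}[t] = \{\psi(t)\}^\downarrow$ is stable. This is a two-line computation.

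For the hard half I would treat $S^{(0)}[I]$ (with $T^{(0)}[F]$ order-dual) and use, for the only time, that $\mathbb{F}^J_{/\mathcal{U}}$ is $|\mathcal{L}_\mathbb{F}|^+$-saturated (Lemma \ref{lem:changultrafilter}, as fixed in the proof of Theorem \ref{thm:embetoup}); saturation is genuinely needed since $S^{(0)}[I] = \bigcup_{c \in I} P^\mathcal{U}_{\exts{c}}$ is a union of stable sets, which for non-principal $I$ one cannot expect to be stable on the nose. As $S^{(0)}[I] \subseteq (S^{(0)}[I])^{\uparrow\downarrow}$ always holds, I would prove the reverse inclusion contrapositively: take $s \notin S^{(0)}[I]$, i.e.\ $\phi(s) \cap I = \varnothing$ (which forces $\top \notin I$, so $I$ is proper), and aim to produce $t \in (S^{(0)}[I])^{\uparrow}$ with $\lnot(s\, N^\mathcal{U} t)$. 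By Lemma \ref{lem:goingdown}.2 one has $(S^{(0)}[I])^{\uparrow} = \{t \in U^\mathcal{U} \mid \psi(t) \supseteq I\}$, so such a $t$ is precisely a realization in $\mathbb{F}^J_{/\mathcal{U}}$ of the type $\Sigma(y) := \{P_{\ints{c}}(y) \mid c \in I\} \cup \{\lnot N(s, y)\}$ in the single variable $y$ of sort $U$. Since $|\Sigma| \leq |\mathbb{F}^+| + 1 \leq |\mathcal{L}_\mathbb{F}|$, by $|\mathcal{L}_\mathbb{F}|^+$-saturation it remains only to verify that $\Sigma$ is finitely satisfiable.

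The finite-satisfiability check is the main obstacle, though it is more bookkeeping than difficulty. Given $c_1, \dots, c_n \in I$, put $c := c_1 \vee \cdots \vee c_n \in I$ (ideals are closed under finite joins). From $\phi(s) \cap I = \varnothing$ we get $s^{-1}[\exts{c}] \notin \mathcal{U}$, hence $A := s^{-1}[W \setminus \exts{c}] \in \mathcal{U}$; using $\exts{c} = \ints{c}^{\downarrow}$, pick for each $w \in W \setminus \exts{c}$ a witness $u_w \in \ints{c}$ with $(w, u_w) \notin N$ and set $t_0(j) := u_{s(j)}$ for $j \in A$ (arbitrary otherwise). Then $t_0^{-1}[\ints{c}] \supseteq A \in \mathcal{U}$, and $c_i \leq c$ gives $\ints{c} \subseteq \ints{c_i}$, so $\mathbb{F}^J_{/\mathcal{U}} \models P_{\ints{c_i}}(t_0)$ for every $i$; and $\{j \mid \lnot(s(j) N t_0(j))\} \supseteq A \in \mathcal{U}$, so $\mathbb{F}^J_{/\mathcal{U}} \models \lnot N(s, t_0)$, i.e.\ $t_0$ realizes the chosen finite fragment of $\Sigma$. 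The $T^{(0)}[F]$ case runs dually, interchanging $W \leftrightarrow U$, $\exts{} \leftrightarrow \ints{}$, $\vee \leftrightarrow \wedge$, $\downarrow \leftrightarrow \uparrow$, and using that filters are closed under finite meets.
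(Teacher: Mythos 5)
Your proof is correct and follows essentially the same route as the paper's: the ``easy'' sets are identified as $\{\phi(s)\}^\uparrow$ and $\{\psi(t)\}^\downarrow$ for the filter $\phi(s)$ and ideal $\psi(t)$, and the ``hard'' sets are handled contrapositively via a type realized by $|\mathcal{L}_\mathbb{F}|^+$-saturation, with the same pointwise witness construction and the same use of Lemmas \ref{lem:goingdown} and \ref{lem:fintoone}. The only (immaterial) difference is that you write out the $S^{(0)}[I]$ case and dualize $T^{(0)}[F]$, whereas the paper does the reverse.
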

\begin{proof}
	Let us first show that $T^{(1)}[t]$ is Galois stable for every $t\in U^\mathcal{U}$.  Let $$I_t:=\{c\in\mathbb{F}^+\mid t^{-1}[\ints{c}]\in\mathcal{U}\}.$$ Since $\mathcal{U}$ is a filter, $I_t$ is an ideal.  By the definition of $I_t$, $t T F$ if and only if $I_t\cap F\neq\varnothing$ for any filter  $F$. This shows that $T^{(1)}[t]=I_t^{\downarrow}$ which is enough to prove that $T^{(1)}[t]$ is Galois stable.
	
	Now let us show that $(T^{(0)}[F])^{\downarrow\uparrow}=T^{(0)}[F]$. It is enough to show that $(T^{(0)}[F])^{\downarrow\uparrow}\subseteq T^{(0)}[F]$, the converse direction being immediate. Let $t$ be such that $t\notin T^{(0)}[F]$, i.e.\ $t^{-1}[U\setminus \ints{c}]\in\mathcal{U}$ for all $c\in F$. Notice that the set of formulas with a free variable $x$ $$\Sigma:=\{\lnot x N t\}\cup\{P_{\exts{c}}(x) \mid c\in F\} $$ is finitely satisfiable in $\mathbb{F}^{J}_{/\mathcal{U}}$. Indeed, since filters are closed under meets, by Lemma \ref{lem:fintoone}, it is enough to show that for any $c\in F$  the set $S:=\{\lnot x N t, P_{\exts{c}}(x)\}$ is satisfiable. We have $t^{-1}[U\setminus\ints{c}]\in\mathcal{U}$. For every $u\in  U\setminus\ints{c}$ there exists $w_u\in\exts{c}$ such that $(w_u,u)\notin N$. Let $s$ be such that $s(j)= w_{t(j)}$ for $j\in  t^{-1}[U\setminus\ints{c}]$. By definition, if $j\in t^{-1}[U\setminus\ints{c}]$, then $(s(j),t(j))\notin N $ and $P_{\exts{c}}(s(j))$, i.e.\ $\lnot s N t$ and $P_{\exts{c}}(s)$ hold in $\mathbb{F}^{J}_{/\mathcal{U}}$ which finishes the proof that $S$ is satisfiable in $\mathbb{F}^{J}_{/\mathcal{U}}$. Since  $\mathbb{F}^{J}_{/\mathcal{U}}$ is $|\mathcal{L}_\mathbb{F}|^+$-saturated, $\Sigma$ is satisfiable in  $\mathbb{F}^{J}_{/\mathcal{U}}$ as well by assigning the variable $x$ to some witness $s\in W^\mathcal{U}$. By the definition of $\Sigma$, we have that $s^{-1}[\exts{c}]\in\mathcal{U}$ for all $c\in F$, while $(s,t)\notin N^\mathcal{U}$. By Lemma \ref{lem:goingdown}, $s\in (T^{(0)}[F])^\downarrow$. Therefore $t\notin (T^{(0)}[F])^{\downarrow\uparrow}$. This concludes the proof that $T$ is  $N^\mathcal{U}$ and $N^\star$ compatible. The proof for $S$ is dual.
\end{proof}

\begin{lemma}\label{lem:twoisone}
	The following inclusions hold:
	\begin{enumerate}
		\item  $(T^{(0)}[F])^{\downarrow}\subseteq S^{(0)}[F^{\uparrow}]$ for every $F\in \mathfrak{F}_{\mathbb{F}^+}$;
		\item  $T^{(0)}[(S^{(1)}[s])^\downarrow]\subseteq s^\uparrow$ for every $s\in W^\mathcal{U}$.
	\end{enumerate}
\end{lemma}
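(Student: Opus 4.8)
The plan is to verify the two inclusions directly from the definitions \eqref{eq:defS} and \eqref{eq:defT} of $S$ and $T$, using Lemma \ref{lem:goingdown} to unfold $(T^{(0)}[F])^{\downarrow}$, and the single non-definitional fact that $wNu$ holds whenever $w\in\exts{c}$ and $u\in\ints{c}$ (since $\ints{c}=\exts{c}^{\uparrow}$). Throughout, for $s\in W^{\mathcal{U}}$ I would write $F_s:=\{c\in\mathbb{F}^+\mid s^{-1}[\exts{c}]\in\mathcal{U}\}$. First I would note that $F_s$ is a filter of $\mathbb{F}^+$, dually to the verification that $I_t$ is an ideal in the proof of Lemma \ref{lem:compatiblerel}: it is upward closed because $c\leq d$ gives $\exts{c}\subseteq\exts{d}$ and $\mathcal{U}$ is upward closed, and it is closed under binary meets because $\exts{c\wedge d}=\exts{c}\cap\exts{d}$ and $\mathcal{U}$ is closed under finite intersections.

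For item 1, fix $F\in\mathfrak{F}_{\mathbb{F}^+}$ and take $s\in(T^{(0)}[F])^{\downarrow}$; by Lemma \ref{lem:goingdown}.1 this says exactly that $F\subseteq F_s$. To conclude $s\in S^{(0)}[F^{\uparrow}]$ I must check $sSI$ for every $I\in\mathfrak{I}_{\mathbb{F}^+}$ with $FN^{\star}I$, i.e.\ with $F\cap I\neq\varnothing$. Picking $c\in F\cap I$, from $c\in F\subseteq F_s$ we get $s^{-1}[\exts{c}]\in\mathcal{U}$, and since $c\in I$ this witnesses $sSI$ by \eqref{eq:defS}.

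For item 2, fix $s\in W^{\mathcal{U}}$ and take $t\in T^{(0)}[(S^{(1)}[s])^{\downarrow}]$. The key step is that $F_s$ itself lies in $(S^{(1)}[s])^{\downarrow}$. Indeed, unravelling the notation, $S^{(1)}[s]=\{I\in\mathfrak{I}_{\mathbb{F}^+}\mid sSI\}$, and by \eqref{eq:defS} together with the definition of $F_s$ we have $sSI$ iff $I\cap F_s\neq\varnothing$; hence $(S^{(1)}[s])^{\downarrow}=\{F\in\mathfrak{F}_{\mathbb{F}^+}\mid \forall I\,(I\cap F_s\neq\varnothing\Rightarrow F\cap I\neq\varnothing)\}$, which visibly contains $F_s$ (here it is used that $F_s$ is a filter, so it is a legitimate element of $\mathfrak{F}_{\mathbb{F}^+}$). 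Therefore $tTF_s$, so by \eqref{eq:defT} there is $c\in F_s$ with $t^{-1}[\ints{c}]\in\mathcal{U}$; but $c\in F_s$ also gives $s^{-1}[\exts{c}]\in\mathcal{U}$, so $s^{-1}[\exts{c}]\cap t^{-1}[\ints{c}]\in\mathcal{U}$. For every $j$ in this intersection $s(j)\in\exts{c}$ and $t(j)\in\ints{c}$, whence $s(j)Nt(j)$; thus $\{j\in J\mid s(j)Nt(j)\}\in\mathcal{U}$, i.e.\ $sN^{\mathcal{U}}t$, which is precisely $t\in s^{\uparrow}$.

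I do not anticipate a genuine obstacle here. The only points requiring care are bookkeeping of which polarity each Galois map is computed in — $(S^{(1)}[s])^{\downarrow}$ is taken in $\mathbb{F}^{\star}_{\mathbb{F}^+}$ while $(T^{(0)}[F])^{\downarrow}$ is taken in the polarity underlying $\mathbb{F}^{J}/\mathcal{U}$ — and checking that $F_s$ is a filter so that it qualifies as a point of $\mathbb{F}^{\star}_{\mathbb{F}^+}$. Note that, unlike the proof of Lemma \ref{lem:compatiblerel}, neither inclusion uses the saturation of the ultrapower.
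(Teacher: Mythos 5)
Your proof is correct, and item~1 coincides with the paper's argument (Lemma \ref{lem:goingdown} plus picking $c_0\in F\cap I$). For item~2 you take a slightly different, and in fact leaner, route: where the paper computes $S^{(1)}[s]=F_s^{\uparrow}$ and then establishes the \emph{equality} $T^{(0)}[(S^{(1)}[s])^{\downarrow}]=T^{(0)}[F_s^{\uparrow\downarrow}]=T^{(0)}[F_s]$ by appealing to Lemmas \ref{lem:compatiblerel} and \ref{lem:two definitions of compatibility}, you only use the one inclusion you actually need, via the observation that $F_s$ itself lies in $(S^{(1)}[s])^{\downarrow}$ (equivalently, $F_s\in F_s^{\uparrow\downarrow}$), so $tTF_s$ follows at once. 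This buys a small economy: your version of item~2 is independent of the stability/compatibility machinery and hence of the saturation hypothesis that enters the paper's chain of citations through Lemma \ref{lem:compatiblerel}, while the paper's stronger identity is of course also available but not needed here. Your care in checking that $F_s$ is a filter (so that it is a legitimate point of $\mathbb{F}^{\star}_{\mathbb{F}^+}$) and in tracking which polarity each Galois operation is computed in matches what the paper does implicitly.
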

\begin{proof}
	1. Let $s\in (T^{(0)}[F])^{\downarrow}$. By Lemma \ref{lem:goingdown}, $s^{-1}[\exts{c}]\in\mathcal{U}$ for all $c\in F$. Now let $I\in F^{\uparrow}$. By definition, there exists $c_0\in F$ such that $c_0\in I$. Since $s^{-1}[\exts{c_0}]\in\mathcal{U}$ it follows that $s S I$. Therefore $s\in S^{(0)}[F^{\uparrow}]$.
	
	2. Let $F_s:=\{c\in\mathbb{F}^+\mid s^{-1}[\exts{c}]\in\mathcal{U}\}$. Since $\mathcal{U}$ is a filter, it follows that $F_s$ is a filter. By definition, $s S I$ if and only if $F_s\cap I\neq\varnothing$ for any ideal $I$. This implies that $S^{(1)}[s]=F_s^{\uparrow}$. Therefore $(S^{(1)}[s])^\downarrow=F_s^{\uparrow\downarrow}$, and hence $T^{(0)}[(S^{(1)}[s])^\downarrow]=T^{(0)}[F_s^{\uparrow\downarrow}]=T^{(0)}[F_s]$, the last identity holding because of Lemmas \ref{lem:compatiblerel} and \ref{lem:two definitions of compatibility}. Now let $t\in T^{(0)}[F_s]$. There exists some $c\in F_s$ such that $t^{-1}[\ints{c}]\in\mathcal{U}$. By the definition of $F_s$, we have that $s^{-1}[\exts{c}]\in\mathcal{U}$. For every $w\in \exts{c}$ and $u\in \ints{c}$ we have that $w N u$. Therefore $s(j) N t(j)$ for every $j\in t^{-1}[\ints{c}]\cap s^{-1}[\exts{c}]$. Since $t^{-1}[\ints{c}]\cap s^{-1}[\exts{c}]\in\mathcal{U}$ it follows that $t\in s^\uparrow$.
\end{proof}

\begin{lem}\label{lem:oppres}For every $f\in\mathcal{F}$ and $g\in\mathcal{G}$:
	\begin{enumerate}
	\item $T^{(0)}[((R^\star_f)^{(0)}[\overline{F}])^\downarrow]=(R^\mathcal{U}_f)^{(0)}[\overline{((T^{\epsilon(f)})^{(0)}[F])^{\partial}}]$;
	\item $S^{(0)}[((R^\star_g)^{(0)}[\overline{I}])^\uparrow]=(R^\mathcal{U}_g)^{(0)}[\overline{((S^{\epsilon(f)})^{(0)}[I])^{\partial}}]$.
\end{enumerate}
\end{lem}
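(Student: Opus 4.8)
The plan is to prove item~1 (item~2 being order-dual) by rewriting both sides as explicit conditions on the ultrafilter $\mathcal U$, using \L os' theorem for the inclusion $\subseteq$ and the $|\mathcal L_\mathbb F|^+$-saturation of $\mathbb F^{J}_{/\mathcal U}$ for the inclusion $\supseteq$, in the same style as Lemmas~\ref{lem:compatiblerel} and~\ref{lem:twoisone}. Throughout I write $\overline F=(F_1,\dots,F_{n_f})\in\overline{\mathfrak F}^{\epsilon_f}$ (so $F_i$ is a filter if $\epsilon_f(i)=1$ and an ideal if $\epsilon_f(i)=\partial$), $I_t:=\{c\in\mathbb F^+\mid t^{-1}[\ints{c}]\in\mathcal U\}$ (an ideal, cf.\ the proof of Lemma~\ref{lem:compatiblerel}) and $F_s:=\{c\in\mathbb F^+\mid s^{-1}[\exts{c}]\in\mathcal U\}$ (a filter, cf.\ the proof of Lemma~\ref{lem:twoisone}).

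\emph{Rewriting the left-hand side.} By Lemma~\ref{lemma:prelimfi}.1, $((R^\star_f)^{(0)}[\overline F])^{\downarrow}=\{G\in\mathfrak F_{\mathbb F^+}\mid f(\overline F)\subseteq G\}=\{G\mid G\supseteq\lfloor f(\overline F)\rfloor\}$; since $tTG$ is upward closed in $G$, applying $T^{(0)}$ to this set gives $\{t\mid tT\lfloor f(\overline F)\rfloor\}=\{t\mid I_t\cap\lfloor f(\overline F)\rfloor\neq\varnothing\}$. Because $f$ is monotone in its $1$-coordinates and antitone in its $\partial$-coordinates while filters (resp.\ ideals) are closed under finite meets (resp.\ joins), every finite meet of elements of $f(\overline F)$ dominates a single $f(\overline a)$ with $\overline a\in\overline F$ — this is the argument marked $(\ast)$ in the proof that $\mathbb F^\star_{\mathbb A}$ is an $\mathcal L$-frame, in its $f$-analogue — so $\lfloor f(\overline F)\rfloor$ is just the up-set of $f(\overline F)$, and, using that $I_t$ is downward closed, the left-hand side becomes $\{t\in U^\mathcal U\mid\exists\overline a\in\overline F:\ t^{-1}[\ints{f(\overline a)}]\in\mathcal U\}$; recall $\ints{f(\overline a)}=R_f^{(0)}[\overline{\exts{a}}^{\epsilon_f}]$ by Definition~\ref{def:complexalg}.

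\emph{Rewriting the right-hand side.} By Lemma~\ref{lem:goingdown}, the $i$-th argument set of $(R^\mathcal U_f)^{(0)}[\cdots]$ is, for $\epsilon_f(i)=1$, $((T^{\epsilon_f(i)})^{(0)}[F_i])^{\partial}=(T^{(0)}[F_i])^{\downarrow}=\{s\in W^\mathcal U\mid F_s\supseteq F_i\}$, and for $\epsilon_f(i)=\partial$, $(S^{(0)}[F_i])^{\uparrow}=\{s\in U^\mathcal U\mid I_s\supseteq F_i\}$; call these $X_i$, so the right-hand side is $\{t\mid R_f^\mathcal U(t,\overline s)\text{ for all }\overline s\in\prod_i X_i\}$. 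For $\text{LHS}\subseteq\text{RHS}$: given $t$ with witness $\overline a\in\overline F$ and any $\overline s\in\prod_i X_i$, each $a_i$ lies in the filter/ideal associated with $s_i$, hence $s_i^{-1}[\exts{a_i}^{\epsilon_f(i)}]\in\mathcal U$; intersecting these $n_f$ sets with $t^{-1}[R_f^{(0)}[\overline{\exts{a}}^{\epsilon_f}]]\in\mathcal U$ yields a $\mathcal U$-set of indices $j$ with $\overline{s(j)}\in\overline{\exts{a}}^{\epsilon_f}$ and $t(j)\in R_f^{(0)}[\overline{\exts{a}}^{\epsilon_f}]$, whence $R_f(t(j),\overline{s(j)})$ and, by \L os, $R_f^\mathcal U(t,\overline s)$.

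\emph{The inclusion $\text{RHS}\subseteq\text{LHS}$} is the crux, and I would prove it contrapositively by saturation. Assume $t^{-1}[R_f^{(0)}[\overline{\exts{a}}^{\epsilon_f}]]\notin\mathcal U$ for every $\overline a\in\overline F$, and consider $\Sigma(\overline x):=\{\lnot R_f(t,\overline x)\}\cup\{P^{\epsilon_f(i)}_c(x_i)\mid 1\leq i\leq n_f,\ c\in F_i\}$, where $P^1_c:=P_{\exts{c}}$, $P^\partial_c:=P_{\ints{c}}$ and $x_i$ has sort $W$ or $U$ according to $\epsilon_f(i)$; then $|\Sigma|\leq|\mathcal L_\mathbb F|$ with finitely many free variables. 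Using Lemma~\ref{lem:fintoone} together with closure of filters/ideals under finite meets/joins, any finite subset of $\Sigma$ is contained in $\{\lnot R_f(t,\overline x)\}\cup\{P^{\epsilon_f(i)}_{c_i}(x_i)\mid 1\leq i\leq n_f\}$ for some $\overline c\in\overline F$; since $t^{-1}[U\setminus R_f^{(0)}[\overline{\exts{c}}^{\epsilon_f}]]\in\mathcal U$ and each $u$ in that complement admits $\overline{b^{u}}\in\overline{\exts{c}}^{\epsilon_f}$ with $\lnot R_f(u,\overline{b^{u}})$, setting $s_i(j):=b^{t(j)}_i$ on that index set realizes this finite subset. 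By $|\mathcal L_\mathbb F|^+$-saturation, $\Sigma$ is realized by some $\overline s$, which then satisfies $s_i\in X_i$ for all $i$ and $\lnot R_f^\mathcal U(t,\overline s)$, so $t\notin\text{RHS}$. Item~2 is obtained by the order-dual argument, exchanging filters with ideals, $S$ with $T$, $\exts{\cdot}$ with $\ints{\cdot}$, and meets with joins. The real difficulty throughout is the uniform bookkeeping of the mixed order-types and the $\partial$-superscripts across all coordinates — once the correct sorts and dual maps are pinned down, the \L os/saturation machinery runs exactly as in the preceding lemmas.
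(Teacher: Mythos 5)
Your proof is correct and follows essentially the same route as the paper's: the same reformulation of the left-hand side via Lemma \ref{lemma:prelimfi} and the observation that $f(\overline{a_1\wedge\cdots\wedge a_n})\leq f(\overline{a_1})\wedge\cdots\wedge f(\overline{a_n})$, the same use of Lemma \ref{lem:goingdown} and an intersection-of-$\mathcal{U}$-sets argument for one inclusion, and the same saturation argument (with Lemma \ref{lem:fintoone} for finite satisfiability and the witness functions $s_i(j):=b^{t(j)}_i$) for the other. No gaps; only the bookkeeping is organized slightly differently.
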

\begin{proof}
	1. Let $t\notin T^{(0)}[((R^\star_f)^{(0)}[\overline{F}])^\downarrow]$, that is for any $\overline{a_1},\ldots,\overline{a_n}\in\overline{F}$, if $c=f(\overline{a_1})\land\cdots\land f(\overline{a_n})$ then $t^{-1}[\ints{c}]\notin\mathcal{U}$. Notice that since $c'=f(\overline{a_1\land\cdots\land a_n})\leq f(\overline{a_1})\land\cdots\land f(\overline{a_n})$ we have that $t^{-1}[\ints{c'}]\notin\mathcal{U}$ implies $t^{-1}[\ints{c}]\notin\mathcal{U}$. Because $\overline{a_1\land\cdots\land a_n}\in\overline{F}$, we have equivalently that $t\notin T^{(0)}[((R^\star_f)^{(0)}[\overline{F}])^\downarrow]$ if and only if $t^{-1}[R^{(0)}_f[\overline{\exts{c}}^{\epsilon}]]\notin\mathcal{U}$ for any  $\overline{c}\in\overline{F}^\epsilon$. Notice that the following set of formulas with free variables $\overline{x}^{\epsilon}$ $$\Sigma:=\bigcup_{k\leq m}\{P_{\ints{c}^{\epsilon(k)}}(x_k)\mid c\in F^{\epsilon(k)}\}\cup\{\lnot R_f(t,\overline{x}^{\epsilon})\}$$ is finitely satisfiable in $\mathbb{F}^{J}_{/\mathcal{U}}$. Indeed, since $F^{\epsilon(k)}$ is a filter or ideal it is closed under meets or joins respectively and therefore by Lemma \ref{lem:fintoone} it is enough to show that the set  $$S=\{P_{\exts{c_1}^{\epsilon(1)}}(x^{\epsilon(1)}_1),\ldots,P_{\exts{c_m}^{\epsilon(m)}}(x^{\epsilon(m)}_m),\lnot R_f(t,\overline{x}^{\epsilon})\}$$ is satisfiable.  Since $t^{-1}[R^{(0)}_f(\overline{\exts{c}}^\epsilon)]\notin\mathcal{U}$, we have that $t^{-1}[U\setminus R^{(0)}_f(\overline{\exts{c}}^\epsilon)]\in\mathcal{U}$ and for each $u\in U\setminus R^{(0)}_f(\overline{\exts{c}}^\epsilon)$ and $k\leq m$ there exists $w_u^{\epsilon(k)}\in \exts{c}^{\epsilon(k)}$ such that $\lnot R_f(\overline{w_u}^{\epsilon})$. So let $s^{\epsilon(k)}_k$ be such that $s^{\epsilon(k)}_k(j)=w^{\epsilon(k)}_{t(j)}$ for each $j\in t^{-1}[U\setminus R^{(0)}_f(\overline{\exts{c}}^\epsilon)]$. Then $\overline{s}^{\epsilon}$ satisfy the set $S$. Since $\mathbb{F}^{J}_{/\mathcal{U}}$ is $|\mathcal{L}_\mathbb{F}|^+$-saturated we have that $\Sigma$ is satisfied in $\mathbb{F}^{J}_{/\mathcal{U}}$ as well by assigning the variables $\overline{x}^{\epsilon}$ to some witnesses $\overline{s}^{\epsilon}\in \overline{W}^\mathcal{U}$. Clearly $(s^{\epsilon{k}}_k)^{-1}[\exts{c}^{\epsilon{k}}]\in\mathcal{U}$ for all $c\in F^{\epsilon(k)}_k$, i.e. $\overline{s}^{\epsilon}\in \overline{((T^{\epsilon(f)})^{(0)}[F])^{\partial}}$. Since $\lnot R_f(t,\overline{s}^{\epsilon})$, we have that $t\notin (R^\mathcal{U}_f)^{(0)}[\overline{((T^{\epsilon(f)})^{(0)}[F])^{\partial}}]$.
	
	For the converse direction assume that $t\in T^{(0)}[((R^\star_f)^{(0)}[\overline{F}])^\downarrow]$, i.e. is such that for some $\overline{c}\in\overline{F}^{\epsilon}$ $t^{-1}[R^{(0)}_f[\overline{\exts{c}}^{\epsilon}]]\in\mathcal{U}$. Now let $\overline{s}^{\epsilon}\in\overline{((T^{\epsilon(f)})^{(0)}[F])^{\partial}}$. By Lemma \ref{lem:goingdown} we have that $\overline{s^{-1}}[\overline{\exts{c}}^{\epsilon}]\in\mathcal{U}$. Hence for every $j\in \bigcap\overline{s^{-1}}[\overline{\exts{c}}^{\epsilon}]\cap t^{-1}[R^{(0)}_f[\overline{\exts{c}}^{\epsilon}]]$ we have that $R_f(t(j),\overline{s}^{\epsilon}(j))$. Since $\bigcap\overline{s^{-1}}[\overline{\exts{c}}^{\epsilon}]\cap t^{-1}[R^{(0)}_f[\overline{\exts{c}}^{\epsilon}]]\in\mathcal{U}$ we have that $t\in (R^\mathcal{U}_f)^{(0)}[\overline{((T^{\epsilon(f)})^{(0)}[F])^{\partial}}]$. This concludes the proof of item 1. The proof of item 2 is dual.
\end{proof}

\begin{lemma}\label{lem:injective}
	Let $P,Q\subseteq\mathfrak{F}_{\mathbb{F}^+}$ such that $P^{\uparrow\downarrow}=P$ and $Q^{\uparrow\downarrow}=Q$. If $P\nsubseteq Q$ then $T^{(0)}[Q]\nsubseteq T^{(0)}[P]$.
\end{lemma}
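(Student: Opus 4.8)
The plan is to prove the statement directly: assuming $P\nsubseteq Q$ we build an element $t\in U^{\mathcal U}$ of the ultrapower witnessing $T^{(0)}[Q]\nsubseteq T^{(0)}[P]$, i.e.\ with $t\in T^{(0)}[Q]$ and $t\notin T^{(0)}[P]$. Unfolding the definition of $T$ (cf.\ \eqref{eq:defT}), $t\in T^{(0)}[Q]$ means that for every $F\in Q$ there is some $c\in F$ with $t^{-1}[\ints c]\in\mathcal U$, while $t\notin T^{(0)}[P]$ will be guaranteed by producing a single $F_0\in P$ such that $t^{-1}[\ints c]\notin\mathcal U$ for all $c\in F_0$. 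First I would extract the combinatorial data: since $P\nsubseteq Q$, fix $F_0\in P\setminus Q$; since $Q=Q^{\uparrow\downarrow}$ and $F_0\notin Q$, unwinding the Galois closure in the polarity $(\mathfrak F_{\mathbb F^+},\mathfrak I_{\mathbb F^+},N^\star)$ yields an ideal $I_0\in Q^{\uparrow}$ — that is, $F\cap I_0\neq\varnothing$ for every $F\in Q$ — with $F_0\cap I_0=\varnothing$. This $I_0$ is the heart of the construction: an ideal of $\mathbb F^+$ meeting every filter in $Q$ but disjoint from $F_0$.

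The core step is to realize a suitable type using the $|\mathcal L_{\mathbb F}|^+$-saturation of $\mathbb F^{J}/\mathcal U$ (Lemma \ref{lem:changultrafilter}). Consider the set of $\mathcal L_{\mathbb F}$-formulas in the single $U$-sorted variable $y$
\[
\Sigma:=\{\,P_{\ints c}(y)\mid c\in I_0\,\}\cup\{\,\lnot P_{\ints c}(y)\mid c\in F_0\,\}.
\]
Since $\mathcal L_{\mathbb F}$ already contains $|\mathbb F^+|$-many predicate symbols, $|\Sigma|\leq 2|\mathbb F^+|\leq|\mathcal L_{\mathbb F}|<|\mathcal L_{\mathbb F}|^+$, so it suffices to check that $\Sigma$ is finitely satisfiable in $\mathbb F^{J}/\mathcal U$; and since the diagonal embedding $\mathbb F\hookrightarrow\mathbb F^{J}/\mathcal U$ is elementary by \L o\'s' theorem and the formulas of $\Sigma$ carry no parameters, it is enough to satisfy each finite subset in $\mathbb F$ by a single $u\in U$. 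Given $c_1,\dots,c_n\in I_0$ and $d_1,\dots,d_m\in F_0$, set $c:=c_1\vee\cdots\vee c_n\in I_0$ (as $I_0$ is an ideal) and $d:=d_1\wedge\cdots\wedge d_m\in F_0$ (as $F_0$ is a filter); since $\ints c=\ints{c_1}\cap\cdots\cap\ints{c_n}$ and $\ints{d_j}\subseteq\ints d$ for each $j$, it suffices to find $u\in\ints c\setminus\ints d$. If there were no such $u$, then $\ints c\subseteq\ints d$, whence $\exts d=\ints d^{\downarrow}\subseteq\ints c^{\downarrow}=\exts c$, i.e.\ $d\leq c$ in $\mathbb F^+$; but then $c\in F_0$ because $F_0$ is an up-set, contradicting $c\in I_0$ and $F_0\cap I_0=\varnothing$. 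The degenerate cases $n=0$ or $m=0$ are handled the same way, using $\bot\notin F_0$ and $\top\notin I_0$ (both consequences of $F_0\cap I_0=\varnothing$ and the nonemptiness of $\mathfrak F_{\mathbb F^+}$, $\mathfrak I_{\mathbb F^+}$); when $n=m=0$ the finite subset is empty.

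Now let $t\in U^{\mathcal U}$ realize $\Sigma$; reading off the interpretation of the predicates $P_{\ints c}$ in the ultrapower, this says $t^{-1}[\ints c]\in\mathcal U$ for all $c\in I_0$ and $t^{-1}[\ints c]\notin\mathcal U$ for all $c\in F_0$. For each $F\in Q$ choose $c\in F\cap I_0\neq\varnothing$; then $t^{-1}[\ints c]\in\mathcal U$, so $tTF$, and hence $t\in T^{(0)}[Q]$. On the other hand $F_0\in P$ and, for every $c\in F_0$, $t^{-1}[\ints c]\notin\mathcal U$, so $\lnot(tTF_0)$, giving $t\notin T^{(0)}[P]$. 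Therefore $T^{(0)}[Q]\nsubseteq T^{(0)}[P]$, as required. (Together with the symmetric instance and the fact that in $(\mathbb F^\star_{\mathbb F^+})^+\cong(\mathbb F^+)^\delta$ distinct elements have distinct Galois-stable extents, this is exactly what yields surjectivity of $(S,T)$ in Theorem \ref{thm:embetoup}.)

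The main obstacle I anticipate is pinning down the right type $\Sigma$: the negative constraints must be placed on a whole filter $F_0$ — rather than on the complement of $I_0$ — both to keep $|\Sigma|$ within the saturation bound $|\mathcal L_{\mathbb F}|^+$ and to make finite satisfiability reduce to the single lattice inequality ``$d\leq c\Rightarrow c\in F_0$''. Verifying that one cannot have $\ints c\subseteq\ints d$ with $c\in I_0$ and $d\in F_0$ is the only genuine computation, and the degenerate emptiness/cardinality cases require a line of care.
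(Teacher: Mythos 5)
Your proof is correct and follows essentially the same route as the paper's: the same choice of $F_0\in P\setminus Q$ and of an ideal $I_0\in Q^{\uparrow}$ disjoint from $F_0$ (using $Q^{\uparrow\downarrow}=Q$), the same type $\Sigma$, the same reduction of finite satisfiability to a single pair via meets/joins (which the paper delegates to Lemma \ref{lem:fintoone}), and the same use of $|\mathcal{L}_\mathbb{F}|^+$-saturation to obtain the witness $t$. Your additional checks (the cardinality bound, elementarity of the diagonal embedding, and the degenerate cases) are harmless refinements of details the paper leaves implicit.
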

\begin{proof}
	Let $F_0\in P\setminus Q$. Then there exists an ideal $I_0\in Q^\uparrow$ such that $F_0\cap I_0=\varnothing$, since otherwise $F_0\in Q^{\uparrow\downarrow}=Q$. Notice that the set of formulas with a free variable $y$ $$\Sigma:=\{P_{\ints{a}}(y)\mid a\in I_0\}\cup\{\lnot P_{\ints{b}}(y)\mid b\in F_0\}$$ is finitely satisfiable in $\mathbb{F}$. Indeed, since filters are closed under meets and ideals are closed under joins, by Lemma \ref{lem:fintoone}, it is enough to show that for any $a\in I_0$ and $b\in F_0$ the set $S=\{P_{\ints{a}}(y),\lnot P_{\ints{b}}(y)\}$ is satisfiable in $\mathbb{F}$ or show that there exists some $u\in \ints{a}\setminus \ints{b}$.  Now since $F_0\cap I_0=\varnothing$ we have that $b\nleq a$, i.e.\ $\ints{b}\nsupseteqq\ints{a}$, so $\ints{a}\setminus \ints{b}\neq\varnothing$ and so  $u\in \ints{a}\setminus \ints{b}$ exists. Since $\mathbb{F}^{J}_{/\mathcal{U}}$ is $|\mathcal{L}_\mathbb{F}|^+$-saturated we have that $\Sigma$ is satisfied in $\mathbb{F}^{J}_{/\mathcal{U}}$ as well by assigning the variables $y$ to some witness $t\in U^\mathcal{U}$. Since $I_0\in Q^\uparrow$, it follows that for all $F\in Q^{\uparrow\downarrow}=Q$ $I_0\cap F\neq\varnothing$. Hence since $t^{-1}[\ints{a}]\in\mathcal{U}$ for every $a\in I_0$, it follows that $t\in T^{(0)}[Q]$. On the other hand, $F_0\in P$ and $t^{-1}[\ints{b}]\notin\mathcal{U}$ for all $b\in F_0$. Therefore $t\notin T^{(0)}[P]$. This concludes the proof.
\end{proof}

\section{The Goldblatt-Thomason theorem for LE-logics}
\label{sec:GT}
The following proposition is an immediate consequence of Proposition \ref{prop:frames to algebras} and Birkoff's Theorem.
\begin{prop}\label{GT:left to right} Let $\mathcal{L}$ be an LE-signature and let $\varphi\vdash\psi$ be an $\mathcal{L}$-sequent. For all $\mathcal{L}$-frames $\mathbb{F},\mathbb{G},\{\mathbb{F}_i\mid i\in I\}$,
	\begin{enumerate}
		\item If $\mathbb{G}$ is a p-morphic image of $\mathbb{F}$, then $\mathbb{F}\models\varphi\vdash\psi$ implies $\mathbb{G}\models\varphi\vdash\psi$.
		\item If $\mathbb{G}$ is a generated subframe of $\mathbb{F}$, then $\mathbb{F}\models\varphi\vdash\psi$ implies $\mathbb{G}\models\varphi\vdash\psi$.
		\item If $\mathbb{F}$ is the disjoint union of $\{\mathbb{F}_i\mid i\in I\}$, then $\mathbb{F}_i\models\varphi\vdash\psi$ for all $i\in I$ implies $\mathbb{F}\models\varphi\vdash\psi$.
		\item  $\mathbb{F}^\star_{\mathbb{F}^+}\models\varphi\vdash\psi$ implies $\mathbb{F}\models\varphi\vdash\psi$.
	\end{enumerate}
\end{prop}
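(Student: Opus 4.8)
The plan is to transfer everything to the level of complex algebras via Proposition \ref{prop:frames to algebras} and then invoke the closure properties of equational classes. First I would observe that an $\mathcal{L}$-sequent $\varphi\vdash\psi$ holds in an $\mathcal{L}$-algebra $\mathbb{A}$ exactly when $\mathbb{A}$ satisfies the equation $\varphi\wedge\psi\approx\varphi$; hence the class $\mathbb{V}$ of all $\mathcal{L}$-algebras in which $\varphi\vdash\psi$ is valid is a variety, so by (the easy half of) Birkhoff's theorem it is closed under homomorphic images, subalgebras, and products. By Proposition \ref{prop:frames to algebras}, for every $\mathcal{L}$-frame $\mathbb{H}$ we have $\mathbb{H}\models\varphi\vdash\psi$ iff $\mathbb{H}^+\in\mathbb{V}$, so each of the four items reduces to exhibiting the relevant complex algebra inside $\mathbf{S}(\mathbb{F}^+)$, $\mathbf{H}(\mathbb{F}^+)$, a product of the $\mathbb{F}_i^+$, or $\mathbf{S}$ of a canonical extension.

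Concretely: for item 1, a surjective p-morphism $(S,T):\mathbb{F}\twoheadrightarrow\mathbb{G}$ yields by Proposition \ref{lemma:lehoms} a complete $\mathcal{L}$-homomorphism $h_{(S,T)}:\mathbb{G}^+\to\mathbb{F}^+$, and surjectivity of $(S,T)$ is precisely injectivity of $a\mapsto(S^{(0)}[\ints{a}],T^{(0)}[\exts{a}])$, so $\mathbb{G}^+$ embeds into $\mathbb{F}^+$ and $\mathbb{F}^+\in\mathbb{V}$ forces $\mathbb{G}^+\in\mathbb{V}$. For item 2, an injective p-morphism $(S,T):\mathbb{G}\hookrightarrow\mathbb{F}$ yields $h_{(S,T)}:\mathbb{F}^+\to\mathbb{G}^+$, which is onto: given $a\in\mathbb{G}^+$ pick $b\in\mathbb{F}^+$ with $S^{(0)}[\ints{b}]=\exts{a}$ as in Definition \ref{def:lehoms}, and Lemma \ref{lem:defofequalmaps} together with stability of $T^{(0)}[\exts{b}]$ gives $h_{(S,T)}(b)=a$; so $\mathbb{G}^+$ is a homomorphic image of $\mathbb{F}^+$. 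For item 3, the isomorphism $\bigl(\coprod_{i\in I}\mathbb{F}_i\bigr)^+\cong\prod_{i\in I}\mathbb{F}_i^+$ makes $\mathbb{F}^+$ a product of the $\mathbb{F}_i^+$, so $\mathbb{F}_i^+\in\mathbb{V}$ for all $i$ gives $\mathbb{F}^+\in\mathbb{V}$. For item 4, the identity $(\mathbb{F}^\star_{\mathbb{A}})^+=\mathbb{A}^\delta$ gives $(\mathbb{F}^\star_{\mathbb{F}^+})^+=(\mathbb{F}^+)^\delta$, and since the canonical embedding exhibits $\mathbb{F}^+$ as a subalgebra of its canonical extension, $(\mathbb{F}^+)^\delta\in\mathbb{V}$ forces $\mathbb{F}^+\in\mathbb{V}$. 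In each case a final application of Proposition \ref{prop:frames to algebras} converts $\mathbb{F}^+\in\mathbb{V}$ (resp.\ $\mathbb{G}^+\in\mathbb{V}$) back into the asserted frame-validity.

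I do not anticipate a genuine obstacle here; the content is bookkeeping. The only points deserving care are that the complex-algebra assignment is contravariant, so that surjective p-morphisms correspond to embeddings and injective p-morphisms to quotient maps (with the orientations above, read off Definition \ref{def:lehoms} and Lemma \ref{lem:defofequalmaps}), and that the canonical embedding $\mathbb{F}^+\hookrightarrow(\mathbb{F}^+)^\delta$ is indeed an $\mathcal{L}$-homomorphism, which holds because the $\sigma$- and $\pi$-extensions agree with the original operations on $\mathbb{F}^+$.
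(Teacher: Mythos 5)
Your proposal is correct and follows exactly the route the paper takes: the paper dismisses this proposition as an immediate consequence of Proposition \ref{prop:frames to algebras} and Birkhoff's theorem, and your argument simply fills in that outline, correctly handling the contravariant bookkeeping (surjective p-morphisms giving embeddings of complex algebras, injective ones giving surjective homomorphisms, $(\coprod_i\mathbb{F}_i)^+\cong\prod_i\mathbb{F}_i^+$, and $(\mathbb{F}^\star_{\mathbb{F}^+})^+=(\mathbb{F}^+)^\delta$ with $\mathbb{F}^+$ a subalgebra of its canonical extension).
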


\begin{thm}\label{th:GT}
	Let $\mathcal{L}=\mathcal{L}(\mathcal{F},\mathcal{G})$ be an LE-signature and let $\mathsf{K}$ be a class of $\mathcal{L}$-frames that is closed under taking ultrapowers. Then $\mathsf{K}$ is $\mathcal{L}$-definable if and only if $\mathsf{K}$ is closed under p-morphic images, generated subframes and co-products, and reflects filter-ideal extensions.
\end{thm}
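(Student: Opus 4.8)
I would prove the two implications separately. The forward implication is immediate: if $\mathsf{K}$ is $\mathcal{L}$-definable, say $\mathsf{K}=\{\mathbb{F}\mid\mathbb{F}\models\Gamma\}$ for a set $\Gamma$ of $\mathcal{L}$-sequents, then the four clauses of Proposition \ref{GT:left to right} state exactly that validity of each sequent in $\Gamma$ is preserved under p-morphic images, generated subframes and co-products, and reflected by filter-ideal extensions; hence $\mathsf{K}$ enjoys all the listed closure and reflection properties (closure under ultrapowers is not even needed for this direction).

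For the converse, assume $\mathsf{K}$ is closed under ultrapowers, p-morphic images, generated subframes and co-products, and reflects filter-ideal extensions. Put $\Lambda:=\{\varphi\vdash\psi\mid\mathbb{G}\models\varphi\vdash\psi\ \text{for every}\ \mathbb{G}\in\mathsf{K}\}$; then $\mathsf{K}\subseteq\{\mathbb{F}\mid\mathbb{F}\models\Lambda\}$ trivially, and I would prove the reverse inclusion. So fix an $\mathcal{L}$-frame $\mathbb{F}$ with $\mathbb{F}\models\Lambda$; by Proposition \ref{prop:frames to algebras} this is the same as $\mathbb{F}^+\models\Lambda$. Since every $\mathcal{L}$-identity is equivalent to a pair of $\mathcal{L}$-sequents, $\mathbb{F}^+\models\Lambda$ is equivalent to $\mathbb{F}^+$ satisfying every identity valid in all the algebras $\mathbb{G}^+$ with $\mathbb{G}\in\mathsf{K}$, which by Birkhoff's theorem means $\mathbb{F}^+\in\mathbb{HSP}(\{\mathbb{G}^+\mid\mathbb{G}\in\mathsf{K}\})$. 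Hence there are a family $\{\mathbb{G}_i\mid i\in I\}\subseteq\mathsf{K}$, an $\mathcal{L}$-subalgebra embedding $\iota\colon B\hookrightarrow\prod_{i\in I}\mathbb{G}_i^+$, and a surjective $\mathcal{L}$-homomorphism $h\colon B\twoheadrightarrow\mathbb{F}^+$. Using the isomorphism $\prod_{i\in I}\mathbb{G}_i^+\cong\bigl(\coprod_{i\in I}\mathbb{G}_i\bigr)^+$ and writing $\mathbb{H}:=\coprod_{i\in I}\mathbb{G}_i$, which belongs to $\mathsf{K}$ by closure under co-products, I may treat $\iota$ as an embedding $B\hookrightarrow\mathbb{H}^+$.

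Next I would dualize. Applying Proposition \ref{prop:reversing arrows} (whose proof internally passes through canonical extensions) to $\iota\colon B\hookrightarrow\mathbb{H}^+$ and to $h\colon B\twoheadrightarrow\mathbb{F}^+$ produces a surjective p-morphism $\mathbb{F}^\star_{\mathbb{H}^+}\twoheadrightarrow\mathbb{F}^\star_{B}$ and an injective p-morphism $\mathbb{F}^\star_{\mathbb{F}^+}\hookrightarrow\mathbb{F}^\star_{B}$; that is, $\mathbb{F}^\star_{B}$ is a p-morphic image of the filter-ideal extension $\mathbb{F}^\star_{\mathbb{H}^+}$ of $\mathbb{H}$, while $\mathbb{F}^\star_{\mathbb{F}^+}$ is a generated subframe of $\mathbb{F}^\star_{B}$. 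Finally, the Enlargement property (Theorem \ref{thm:embetoup}) applied to $\mathbb{H}$ yields a set $J$, an ultrafilter $\mathcal{U}$ over $J$, and a surjective p-morphism $\mathbb{H}^{J}/\mathcal{U}\twoheadrightarrow\mathbb{F}^\star_{\mathbb{H}^+}$. Chaining the memberships upwards: $\mathbb{H}^{J}/\mathcal{U}\in\mathsf{K}$ by closure under ultrapowers; hence $\mathbb{F}^\star_{\mathbb{H}^+}\in\mathsf{K}$, and then $\mathbb{F}^\star_{B}\in\mathsf{K}$, both by closure under p-morphic images; hence $\mathbb{F}^\star_{\mathbb{F}^+}\in\mathsf{K}$ by closure under generated subframes; and therefore $\mathbb{F}\in\mathsf{K}$ because $\mathsf{K}$ reflects filter-ideal extensions. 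This shows $\{\mathbb{F}\mid\mathbb{F}\models\Lambda\}\subseteq\mathsf{K}$, completing the converse.

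The genuinely hard part, namely that the filter-ideal extension of a frame is a p-morphic image of one of its ultrapowers, has already been isolated as Theorem \ref{thm:embetoup}; so the main remaining obstacle is careful bookkeeping — keeping straight the directions of all the dualized arrows (through $(\cdot)^+$, embeddings of $\mathcal{L}$-algebras correspond to p-morphic images of frames and surjections correspond to generated subframes, everything being contravariant), and noting that the intermediate algebra $B$, which need be neither complete nor perfect, is never dualized by hand but only passed to Proposition \ref{prop:reversing arrows}, whose proof absorbs the move to canonical extensions and rests on $(\cdot)^\delta$ preserving injections and surjections. Secondary points to tidy up are the legitimacy of the Birkhoff step (unproblematic, since the $\mathcal{L}$-algebras form a variety and sequents are precisely inequalities) and the degenerate case $\mathsf{K}=\varnothing$.
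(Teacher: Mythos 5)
Your proposal is correct and follows essentially the same route as the paper: the forward direction via Proposition \ref{GT:left to right}, and the converse via Birkhoff's theorem applied to $\mathbb{F}^+$ and the algebras $\{\mathbb{G}^+\mid\mathbb{G}\in\mathsf{K}\}$, dualization of the resulting $\mathbb{HSP}$ diagram through Proposition \ref{prop:reversing arrows}, the Enlargement property (Theorem \ref{thm:embetoup}) applied to the co-product, and then chaining the closure and reflection hypotheses exactly as in the paper's proof of Theorem \ref{th:GT}. The directions of all dualized arrows and the identification of p-morphic images and generated subframes match the paper's conventions, so no gap remains.
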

\begin{proof}
The left to right direction is shown in Proposition \ref{GT:left to right}. For the right to left direction, let $\mathsf{K}$ be any class of frames satisfying the closure conditions of the statement. It suffices to show that any frame $\mathbb{F}$ validating the $\mathcal{L}$-theory of $\mathsf{K}$ is itself a member of $\mathsf{K}$. 

Let $\mathbb{F}$ be such a frame. Clearly $\mathbb{F}^+$ satisfies the theory of $\mathsf{K}^+:=\{\mathbb{G}^+\mid \mathbb{G}\in\mathsf{K}\}$. Hence by
Birkhoff's theorem $\mathbb{F}^+$ belongs to the variety generated by $\mathsf{K}^+$, and therefore $\mathbb{F}^+$ is the homomorphic image of a subalgebra of some product $\prod_{i\in I}\mathbb{F}_i^+\cong(\coprod_{i\in I}\mathbb{F}_i)^+$, where $\mathbb{F}_i\in\mathsf{K}$ for each $i\in I$, as illustrated by the following diagram:
\[\mathbb{F}^+\twoheadleftarrow\mathbb{A}\hookrightarrow(\coprod_{i\in I}\mathbb{F}_i)^+.\] Since $\mathsf{K}$ is closed under taking disjoint unions, $\coprod_{i\in I}\mathbb{F}_i\in\mathsf{K}$. Applying Proposition \ref{prop:reversing arrows} to the diagram above yields:
\[\mathbb{F}^\star_{\mathbb{F}^+}\hookrightarrow\mathbb{F}^\star_{\mathbb{A}}\twoheadleftarrow\mathbb{F}^\star_{(\coprod_{i\in I}\mathbb{F}_i)^+}.\]
By Theorem \ref{thm:embetoup} there exists a set $J$ and some ultrafilter $\mathcal{U}$ over $J$ such that a surjective p-morphism $(\coprod_{i\in I}\mathbb{F}_i)^{J}/\mathcal{U}\twoheadrightarrow\mathbb{F}^\star_{(\coprod_{i\in I}\mathbb{F}_i)^+}$ exists. Since $\coprod_{i\in I}\mathbb{F}_i\in\mathsf{K}$ and $\mathsf{K}$ is closed under ultrapowers, $(\coprod_{i\in I}\mathbb{F}_i)^{J}/\mathcal{U}\in\mathsf{K}$. Since $\mathsf{K}$ is closed under p-morphic images, $\mathbb{F}^\star_{(\coprod_{i\in I}\mathbb{F}_i)^+}\in\mathsf{K}$. As $\mathsf{K}$ is closed under p-morphic images and generated subframes, it follows that $\mathbb{F}^\star_{\mathbb{A}}$ and $\mathbb{F}^\star_{\mathbb{F}^+}$ are in $\mathsf{K}$, which implies that $\mathbb{F}\in\mathsf{K}$ since $\mathsf{K}$ reflects filter-ideal extensions. 
\end{proof}

\section{Applications}
\label{sec:applications}
In the present section, we give examples of first-order conditions on $\mathcal{L}$-frames which we show to be not definable in the corresponding $\mathcal{L}$ language.
Let $\mathcal{L}:=(\mathcal{F},\mathcal{G})$ where $\mathcal{F}=\varnothing$ and $\mathcal{G}=\{\Box\}$. Then $\mathcal{L}$-frames are tuples $\mathbb{F}=(\mathbb{P},R)$ where $\mathbb{P}=(W,U,N)$ is a polarity and $R\subseteq W\times U$ is an $N$-compatible relation.

\begin{example}\label{ex:difference}
Let $\mathsf{K}$ be the elementary class of $\mathcal{L}$-frames $\mathbb{F}$ defined by \begin{equation}\label{eq:difference}
R=N^c.
\end{equation}
To see that $\mathsf{K}$ is not $\mathcal{L}$-definable, consider the $\mathcal{L}$-frames of Example \ref{ex:coproduct}.
\begin{center}
\begin{tikzpicture}
 \filldraw[black] (0,0) circle (2 pt);
  \filldraw[black] (1,0) circle (2 pt);
   \filldraw[black] (0,1) circle (2 pt);
  \filldraw[black] (1,1) circle (2 pt);
   \draw[ultra thick] (0, 0) -- (0, 1);
   \draw[ultra thick] (1, 0) -- (1, 1);
    \draw[thick] (0, 0) -- (1, 1);
   \draw[thick] (1, 0) -- (0, 1);
    \draw (0, -0.3) node {{\large $a_1$}};
     \draw (0, 1.3) node {{\large $x_1$}};
     \draw (1, -0.3) node {{\large $b_1$}};
     \draw (1, 1.3) node {{\large $y_1$}};
   \draw (0.5, -0.8) node {{\large $\mathbb{F}_1$}};

  \filldraw[black] (3,0) circle (2 pt);
  \filldraw[black] (4,0) circle (2 pt);
   \filldraw[black] (3,1) circle (2 pt);
  \filldraw[black] (4,1) circle (2 pt);
   \draw[ultra thick] (3, 0) -- (3, 1);
   \draw[ultra thick] (4, 0) -- (4, 1);
    \draw[ thick] (3, 0) -- (4, 1);
   \draw[ thick] (4, 0) -- (3, 1);
    \draw (3, -0.3) node {{\large $a_2$}};
     \draw (3, 1.3) node {{\large $x_2$}};
     \draw (4, -0.3) node {{\large $b_2$}};
     \draw (4, 1.3) node {{\large $y_2$}};
   \draw (3.5, -0.8) node {{\large $\mathbb{F}_2$}};

  \filldraw[black] (6,0) circle (2 pt);
  \filldraw[black] (7,0) circle (2 pt);
  \filldraw[black] (8,0) circle (2 pt);
  \filldraw[black] (9,0) circle (2 pt);
  \filldraw[black] (6,1) circle (2 pt);
  \filldraw[black] (7,1) circle (2 pt);
  \filldraw[black] (8,1) circle (2 pt);
  \filldraw[black] (9,1) circle (2 pt);

  \draw[ultra thick] (6, 0) -- (6, 1);
   \draw[ultra thick] (7, 0) -- (7, 1);
    \draw[thick] (6, 0) -- (7, 1);
   \draw[thick] (7, 0) -- (6, 1);
    \draw (6, -0.3) node {{\large $a_1$}};
     \draw (6, 1.3) node {{\large $x_1$}};
     \draw (7, -0.3) node {{\large $b_1$}};
     \draw (7, 1.3) node {{\large $y_1$}};

     \draw[ultra thick] (8, 0) -- (8, 1);
   \draw[ultra thick] (9, 0) -- (9, 1);
    \draw[thick] (8, 0) -- (9, 1);
   \draw[ thick] (9, 0) -- (8, 1);
    \draw (8, -0.3) node {{\large $a_2$}};
     \draw (8, 1.3) node {{\large $x_2$}};
     \draw (9, -0.3) node {{\large $b_2$}};
     \draw (9, 1.3) node {{\large $y_2$}};
   \draw[very thick,dotted] (6, 0) -- (8, 1);
    \draw[very thick,dotted] (6, 0) -- (9, 1);
    \draw[very thick,dotted] (7, 0) -- (8, 1);
    \draw[very thick,dotted] (7, 0) -- (9, 1);
    \draw[very thick,dotted] (6, 1) -- (8, 0);
    \draw[very thick,dotted] (6, 1) -- (9, 0);
    \draw[very thick,dotted] (7, 1) -- (8, 0);
    \draw[very thick,dotted] (7, 1) -- (9, 0);
   \draw (7.5, -0.8) node {{\large $\mathbb{F}_1\uplus \mathbb{F}_2$}};
\end{tikzpicture}
\end{center}

Then, clearly, $(W_1\times U_2)\cup (W_2\times U_1)\subseteq (N_1\uplus N_2)\cap(R_1\uplus R_2)$, which implies that $$R_1\uplus R_2\neq (N_1\uplus N_2)^c.$$ This shows that  $\mathsf{K}$ is not closed under disjoint unions, hence by Theorem \ref{th:GT}, condition \eqref{eq:difference} is not $\mathcal{L}$-definable. 	 
\end{example}

\begin{example}\label{ex:every point has a predecessor}
Let $\mathsf{K}$ be the elementary class of $\mathcal{L}$-frames $\mathbb{F}$ defined by \begin{equation}\label{eq:every point has a predecessor}
\forall u\exists w(\lnot wRu).
\end{equation}
To see that $\mathsf{K}$ is not $\mathcal{L}$-definable consider the $\mathcal{L}$-frames and the p-morphism of Example \ref{ex:morphism1}.
\begin{center}
\begin{tikzpicture}
 \filldraw[black] (0,0) circle (2 pt);
   \filldraw[black] (0,1) circle (2 pt);
  \filldraw[black] (1,1) circle (2 pt);
   \draw[ultra thick] (0, 0) -- (0, 1);
    \draw (0, -0.3) node {{\large $a_2$}};
     \draw (0, 1.3) node {{\large $x_2$}};
     \draw (1, 1.3) node {{\large $y_2$}};
   \draw (0.5, -0.8) node {{\large $\mathbb{F}_2$}};

  \filldraw[black] (3,0) circle (2 pt);
  \filldraw[black] (4,0) circle (2 pt);
   \filldraw[black] (3,1) circle (2 pt);
  \filldraw[black] (4,1) circle (2 pt);
   \draw[ultra thick] (3, 0) -- (3, 1);
   \draw[ultra thick] (4, 0) -- (4, 1);
    \draw (3, -0.3) node {{\large $a_1$}};
     \draw (3, 1.3) node {{\large $x_1$}};
     \draw (4, -0.3) node {{\large $b_1$}};
     \draw (4, 1.3) node {{\large $y_1$}};
      \draw[very thick,dotted] (0, 0) -- (3, 1);
      \draw[very thick,dotted] (0, 1) -- (3, 0);
      \draw[very thick,dotted] (1, 1) -- (4, 0);
   \draw (3.5, -0.8) node {{\large $\mathbb{F}_1$}};
   \end{tikzpicture}
   \end{center}

Since $\mathbb{F}_2$ is a generated subframe of $\mathbb{F}_1$ and $\mathbb{F}_1\in\mathsf{K}$ while $\mathbb{F}_2\notin\mathsf{K}$, the class $\mathsf{K}$ is not closed under generated subframes, hence by Theorem \ref{th:GT}, condition \eqref{eq:every point has a predecessor} is not $\mathcal{L}$-definable. 	
\end{example}

\begin{example}\label{ex:irreflexive}
	Let $\mathsf{K}$ be the elementary class of $\mathcal{L}$-frames $\mathbb{F}$ defined by \begin{equation}\label{eq:irreflexive}
	R^c\subseteq N.
	\end{equation}
	To see that $\mathsf{K}$ is not $\mathcal{L}$-definable consider the $\mathcal{L}$-frames and p-morphism of Example \ref{ex:morphism2}.
\begin{center}
\begin{tikzpicture}
 \filldraw[black] (0,0) circle (2 pt);
  \filldraw[black] (1,0) circle (2 pt);
   \filldraw[black] (0,1) circle (2 pt);
  \filldraw[black] (1,1) circle (2 pt);
   \draw[ultra thick] (0, 0) -- (0, 1);
   \draw[ultra thick] (1, 0) -- (1, 1);
    \draw[thick] (0, 0) -- (1, 1);
   \draw[thick] (1, 0) -- (0, 1);
    \draw (0, -0.3) node {{\large $a_1$}};
     \draw (0, 1.3) node {{\large $x_1$}};
     \draw (1, -0.3) node {{\large $b_1$}};
     \draw (1, 1.3) node {{\large $y_1$}};
   \draw (0.5, -0.8) node {{\large $\mathbb{F}_1$}};

  \filldraw[black] (3,0) circle (2 pt);
   \filldraw[black] (3,1) circle (2 pt);
    \draw (3, -0.3) node {{\large $a_2$}};
     \draw (3, 1.3) node {{\large $x_2$}};
   \draw (3, -0.8) node {{\large $\mathbb{F}_2$}};
   \end{tikzpicture}
   \end{center}

Since $\mathbb{F}_2$ is a p-morphic image of $\mathbb{F}_1$ and $\mathbb{F}_1\in\mathsf{K}$ while $\mathbb{F}_2\notin\mathsf{K}$,  $\mathsf{K}$ is not closed under  p-morphic images, hence by Theorem \ref{th:GT}, condition \eqref{eq:irreflexive} is not $\mathcal{L}$-definable. 	
\end{example}

\section{Conclusions}
\label{sec:conclusions}
\paragraph{Present contributions. } In the present paper, we state and prove a version of the Goldbatt-Thomason theorem which applies uniformly to normal LE-logics in arbitrary signatures. This class of logics includes well known logics such as the full Lambek calculus and its axiomatic extensions,  orthologic, and the Lambek-Grishin calculus. The theorem is formulated as usual in terms of four model-theoretic constructions (coproduct, bounded morphic image, generated subframe, filter-ideal frame) on LE-frames, which we define and justify on duality-theoretic grounds. 
\paragraph{A wider research program. } In \cite{Go17}, Goldblatt  axiomatically defines a ``canonicity framework'' which is guaranteed to satisfy Goldblatt's algebraic generalisation of Fine's canonicity theorem: an ultraproducts-closed class of structures generates a variety that is closed under canonical extensions.  As a case study, Goldblatt proved that this canonicity framework applies to general lattices.

A natural prosecution of the present work is to apply Goldblatt's canonicity framework  to normal LEs, and more in general to varieties generated by concept lattices with additional operations that are \emph{first-order definable} over polarity-based models. In other words, operations that are definable via a first-order Standard Translation such as the one given in Definition \ref{def:st}. The role of first-order definability is  core to the  relational semantics of wide classes of logics on classical, (bi-)intuitionistic and distributive propositional bases, and  in the setting of LE-logics, the polarity-based semantics is a natural candidate to explore meta-logical properties of LE-logics in connection with first-order definability. 
The results of the present paper can provide a basis where these ideas can be developed.

\paragraph{Labelled calculi for LE-logics. } An example of such meta-logical properties is proof-theoretic and consists in uniformly developing labelled sequent calculi for LE-logics, applying Sara Negri's methodology \cite{negri2005proof, dyckhoff2012proof} in the context of $\mathcal{L}$-frames. 
\bibliography{bib}

\begin{thebibliography}{10}

\bibitem{bilkova2012distributive}
Marta B{\i}lkov{\'a}, Rostislav Horc{\i}k, and Jir{\i} Velebil.
\newblock Distributive substructural logics as coalgebraic logics over posets.
\newblock {\em Advances in Modal Logic}, 9:119--142, 2012.

\bibitem{blackburn2002modal}
Patrick Blackburn, Maarten De~Rijke, and Yde Venema.
\newblock {\em Modal logic}, volume~53.
\newblock Cambridge University Press, 2002.

\bibitem{celani1999priestley}
Sergio Celani and Ramon Jansana.
\newblock Priestley duality, a {S}ahlqvist theorem and a {G}oldblatt-{T}homason
  theorem for positive modal logic.
\newblock {\em Logic Journal of IGPL}, 7(6):683--715, 1999.

\bibitem{chang1990model}
C.C. Chang and H.J. Keisler.
\newblock {\em Model Theory}.
\newblock Studies in Logic and the Foundations of Mathematics. Elsevier
  Science, 1990.

\bibitem{TarkPaper}
W.~Conradie, S.~Frittella, A.~Palmigiano, M.~Piazzai, A.~Tzimoulis, and N.M.
  Wijnberg.
\newblock Toward an epistemic-logical theory of categorization.
\newblock In {\em 16th conference on {T}heoretical {A}spects of {R}ationality
  and {K}nowledge (TARK 2017)}, volume 251 of {\em {E}lectronic {P}roceedings
  in {T}heoretical {C}omputer {S}cience}, pages 170--189.

\bibitem{conradie2016categories}
Willem Conradie, Sabine Frittella, Alessandra Palmigiano, Michele Piazzai,
  Apostolos Tzimoulis, and Nachoem~M Wijnberg.
\newblock Categories: how i learned to stop worrying and love two sorts.
\newblock In {\em International Workshop on Logic, Language, Information, and
  Computation}, pages 145--164. Springer, 2016.

\bibitem{conradie2016algorithmic}
Willem Conradie and Alessandra Palmigiano.
\newblock Algorithmic correspondence and canonicity for non-distributive
  logics.
\newblock {\em arXiv preprint arXiv:1603.08515}, 2016.

\bibitem{dunn2005canonical}
J~Michael Dunn, Mai Gehrke, and Alessandra Palmigiano.
\newblock Canonical extensions and relational completeness of some
  substructural logics.
\newblock {\em The Journal of Symbolic Logic}, 70(3):713--740, 2005.

\bibitem{dyckhoff2012proof}
Roy Dyckhoff and Sara Negri.
\newblock Proof analysis in intermediate logics.
\newblock {\em Archive for Mathematical Logic}, 51(1-2):71--92, 2012.

\bibitem{galatos2007residuated}
Nikolaos Galatos, Peter Jipsen, Tomasz Kowalski, and Hiroakira Ono.
\newblock {\em Residuated lattices: an algebraic glimpse at substructural
  logics}, volume 151.
\newblock Elsevier, 2007.

\bibitem{ganter2012formal}
Bernhard Ganter and Rudolf Wille.
\newblock {\em Formal concept analysis: mathematical foundations}.
\newblock Springer Science \& Business Media, 2012.

\bibitem{gehrke2006generalized}
Mai Gehrke.
\newblock Generalized kripke frames.
\newblock {\em Studia Logica}, 84(2):241--275, 2006.

\bibitem{gehrke2001bounded}
Mai Gehrke and John Harding.
\newblock Bounded lattice expansions.
\newblock {\em Journal of Algebra}, 238(1):345--371, 2001.

\bibitem{gehrke2006macneille}
Mai Gehrke, John Harding, and Yde Venema.
\newblock Macneille completions and canonical extensions.
\newblock {\em Transactions of the American Mathematical Society},
  358(2):573--590, 2006.

\bibitem{Go17}
Robert Goldblatt.
\newblock Canonical extensions and ultraproducts of polarities.
\newblock {\em Algebra Universalis}, forthcoming, 2018.

\bibitem{goldblatt1974semantic}
Robert~I Goldblatt.
\newblock Semantic analysis of orthologic.
\newblock {\em Journal of Philosophical logic}, 3(1-2):19--35, 1974.

\bibitem{goldblatt1975axiomatic}
Robert~I Goldblatt and Steve~K Thomason.
\newblock Axiomatic classes in propositional modal logic.
\newblock In {\em Algebra and logic}, pages 163--173. Springer, 1975.

\bibitem{greco2018algebraic}
Giuseppe Greco, Peter Jipsen, Fei Liang, Alessandra Palmigiano, and Apostolos
  Tzimoulis.
\newblock Algebraic proof theory for {LE}-logics.
\newblock {\em submitted, arXiv preprint arXiv:1808.04642}, 2018.

\bibitem{grishin1983}
Vyacheslav~N. Grishin.
\newblock On a generalization of the ajdukiewicz-lambek system.
\newblock {\em Studies in nonclassical logics and formal systems}, pages
  315--334, 1983.

\bibitem{hodges1993model}
Wilfrid Hodges et~al.
\newblock {\em Model theory}, volume~42.
\newblock Cambridge University Press, 1993.

\bibitem{holliday2016possibility}
Wesley~Halcrow Holliday.
\newblock Possibility frames and forcing for modal logic.
\newblock 2016.

\bibitem{kurz2007goldblatt}
Alexander Kurz and Ji{\v{r}}{\'\i} Rosick{\`y}.
\newblock The {G}oldblatt-{T}homason theorem for coalgebras.
\newblock In {\em International Conference on Algebra and Coalgebra in Computer
  Science}, pages 342--355. Springer, 2007.

\bibitem{lambek1958mathematics}
Joachim Lambek.
\newblock The mathematics of sentence structure.
\newblock {\em The American Mathematical Monthly}, 65(3):154--170, 1958.

\bibitem{GNV05}
Hideo~Nagahashi Mai~Gehrke and Yde Venema.
\newblock A {S}ahlqvist theorem for distributive modal logic.
\newblock {\em Annals of Pure and Applied Logic}, 131:65--102, 2005.

\bibitem{MNPW18}
Krishna Manoorkar, Sajad Nazari, Alessandra Palmigiano, and Nachoem~M Wijnberg.
\newblock Rough concepts.
\newblock 2018.

\bibitem{moshier2016relational}
MA~Moshier.
\newblock A relational category of formal contexts.
\newblock {\em Preprint}, 2016.

\bibitem{negri2005proof}
Sara Negri.
\newblock Proof analysis in modal logic.
\newblock {\em Journal of Philosophical Logic}, 34(5-6):507, 2005.

\bibitem{pawlak1998rough}
Zdzislaw Pawlak.
\newblock Rough set theory and its applications to data analysis.
\newblock {\em Cybernetics \& Systems}, 29(7):661--688, 1998.

\bibitem{sano2010goldblatt}
Katsuhiko Sano and Minghui Ma.
\newblock {G}oldblatt-{T}homason-style theorems for graded modal language.
\newblock {\em Advances in Modal Logic}, 2010:330--349, 2010.

\bibitem{sofronie2000duality}
Viorica Sofronie-Stokkermans.
\newblock Duality and canonical extensions of bounded distributive lattices
  with operators, and applications to the semantics of non-classical logics
  {I}.
\newblock {\em Studia Logica}, 64(1):93--132, 2000.

\bibitem{sofronie2000duality2}
Viorica Sofronie-Stokkermans.
\newblock Duality and canonical extensions of bounded distributive lattices
  with operators, and applications to the semantics of non-classical logics
  {II}.
\newblock {\em Studia Logica}, 64(2):151--172, 2000.

\bibitem{teheux2014goldblatt}
Bruno Teheux.
\newblock {G}oldblatt-{T}homason theorem for {{\L}}ukasiewicz finitely-valued
  modal language.
\newblock {\em Graded Logical Approaches and their Applications-Abstracts},
  pages 122--125, 2014.

\end{thebibliography}
\bibliographystyle{plain}

\end{document}